\newtheorem{remark}{Remark}[section]
\newcommand{\csta}{C_{\rm sta}}
\newcommand{\hatcsta}{\widehat{C}_{\rm sta}}
\newcommand{\om}{\omega}
\newcommand{\Om}{\Omega}
\newcommand{\pa}{\partial}
\renewcommand{\i}{{\rm\mathbf i}}
\DeclareMathOperator{\re}{{Re}}
\DeclareMathOperator{\im}{{Im}}
\newcommand{\ddiv}{\mbox{\rm div\,}}
\newcommand{\curl}{\mbox{\rm {\bf curl\,}}}
\newcommand{\bV}{\mathbf{V}}
\newcommand{\bH}{\mathbf{H}}
\newcommand{\bff}{\mathbf{f}}
\newcommand{\bfu}{\mathbf{u}}
\newcommand{\bfv}{\mathbf{v}}
\newcommand{\bfx}{\mathbf{x}}
\newcommand{\bfE}{\mathbf{E}}
\newcommand{\bftE}{\widetilde{\mathbf{E}}}
\newcommand{\bfF}{\mathbf{F}}
\newcommand{\bfP}{\mathbf{P}}
\newcommand{\bfS}{\mathbf{S}}
\newcommand{\bfV}{\mathbf{V}}
\newcommand{\bfnu}{\boldsymbol{\nu}}
\newcommand{\bfpsi}{\boldsymbol{\psi}}
\newcommand{\bfPsi}{\boldsymbol{\Psi}}
\newcommand{\bftPsi}{\widetilde{\bfPsi}}
\newcommand{\bfphi}{\boldsymbol{\phi}}
\newcommand{\bcV}{\boldsymbol{\mathcal{V}}}
\newcommand{\ba}{\mathbf{a}}
\newcommand{\bfb}{\mathbf{b}}
\newcommand{\bfc}{\mathbf{c}}
\newcommand{\bfd}{\mathbf{d}}
\newcommand{\ome}{\omega}
\newcommand{\Ome}{\Omega}
\newcommand{\bL}{\mathbf{L}}
\newcommand{\E}{\mathbb{E}}
\newcommand{\Div}{{\rm div}}
\newcommand{\veps}{\varepsilon}
\renewcommand{\i}{{\rm\mathbf i}}
\newcommand{\bbf}{\mathbf{f}}
\newcommand{\vep}{\varepsilon}
\def\be{\begin{equation}}
\def\ee{\end{equation}}
\def\br{\begin{eqnarray}}
\def\er{\end{eqnarray}}
\title{An efficient Monte Carlo interior penalty discontinuous Galerkin method for the time-harmonic 
	Maxwell's Equations with random coefficients}
\author{
Xiaobing Feng\thanks{Department of Mathematics, The University of
Tennessee, Knoxville, TN 37996, U.S.A.  ({\tt xfeng@math.utk.edu}). The work of this author was 
partially supported by the NSF grant DMS-1620168.}
\and
Junshan Lin\thanks{Department of Mathematics and Statistics, Auburn University,
Auburn, AL 36849, U.S.A. ({\tt jzl0097@auburn.edu}) The work of this author was 
partially supported by the NSF grant DMS-1719851.}
\and
Cody Lorton\thanks{Department of Mathematics and Statistics, University of
West Florida, Pensacola, FL 32514, U.S.A.  ({\tt clorton@uwf.edu}).}
}
\begin{document}

\maketitle

\begin{abstract}
This paper develops an efficient Monte Carlo interior penalty discontinuous 
Galerkin method for electromagnetic wave propagation in random media. 
This method is based on a multi-modes expansion of the solution 
to the time-harmonic random Maxwell equations. It is shown
that each mode function satisfies a Maxwell system with random sources defined recursively.
 An unconditionally stable IP-DG method is employed to discretize the 
nearly deterministic Maxwell system and the Monte Carlo method combined with 
an efficient acceleration strategy is proposed for computing the mode 
functions and the statistics of the electromagnetic wave.
A complete error analysis is established for the 
proposed multi-modes Monte Carlo IP-DG method. It is proved that the proposed 
method converges with an optimal order for each of three levels of 
approximations. Numerical experiments are provided to validate the theoretical 
results and to gauge the performance of the proposed numerical  method and approach. 
\end{abstract}

\begin{keywords}
Electromagnetic waves, Maxwell equations, random media, Rellich identity,
discontinuous Galerkin methods, error estimates, Monte Carlo method.
\end{keywords}

\begin{AMS}
65N12, 
65N15, 
65N30, 
78A40  
\end{AMS}
\section{Introduction}

The study of electromagnetic wave propagation in random media, such as 
atmosphere and biological media, 
has been a subject of interest for decades due to its  in applications 
in communication, remote sensing, detection, imaging, etc 
\cite{Ishimaru_78, Andrews_Phillips_05, Tsang_Kong_Shin_85}.
In such instances, it is of practical interest to characterize the statistics 
of the electromagnetic wave field scattered by the random media.
However, even with the rapid development of modern computing power, numerical 
modeling of the full three-dimensional Maxwell's equations with random coefficients
is still a challenging task. This not only has to do with the large scale of the problem
and its uncertainty 
but also is related to the modeling of multiple scattering effects for wave propagation in random media. 
Typically, existing methods such as 
direct Monte Carlo techniques for sampling the random media and the corresponding solution or stochastic Galerkin methods by representing the random solution with the
Karhunen--Lo\`{e}ve or Wiener Chaos expansion are still computationally intractable for solving random vector Maxwell's equations in three-dimensions \cite{FGPS,Lord_Powell_Shardlow}.

In this paper, we present an efficient Monte-Carlo interior penalty discontinuous Galerkin (MCIP-DG) method
for the characterization of the statistics of an electromagnetic wave in random media.
Let $D$ be a convex polygonal domain in $\mathbb{R}^3$, and $(\Omega, \mathcal{F}, P)$ be the probability space with the sample space $\Omega$, the $\sigma$-algebra $\mathcal{F}$, and the probability measure $P$. 
We consider the following time-harmonic Maxwell problem for the electric field $\bfE$:  
\begin{alignat}{2}
	\curl \curl \bfE(\om,\cdot) - k^2 \alpha(\om,\cdot)^2 \bfE(\om,
	\cdot) &= \bff(\om,\cdot) &&\quad  \mbox{in } D, \label{Eq:PDE1}\\
	 \curl \bfE(\om,\cdot) \times \bfnu - \i k \lambda \bfE_T(\om,\cdot) &= 
	 \mathbf{0} &&\quad  \mbox{on } \pa D,
	 \label{Eq:PDE2}
\end{alignat}
for almost all random samples $\om\in\Omega$.  Here $k > 0$ is the wave number, $
\lambda > 0$ is an impedance parameter, and $\bfnu
$ is the outward normal to the boundary $\pa D$. In addition, we use $\bfE_T$ to denote
the tangential projection of $\bfE$ on $\pa D$, which is given by $$\bfE_T := (\bfnu \times  \bfE) \times \bfnu.$$ 
The boundary condition \eqref{Eq:PDE2} is called impedance boundary condition in electromagnetism (c.f. \cite{Colton_Kress_1992}). 

The index of refraction, $\alpha(\omega,\bfx)$, is a random field such that for each fixed point 
$\bfx\in D$, $\alpha(\cdot,\bfx)$ is a random variable.  In this paper, we consider weakly random media 
in the sense that $\alpha$  has  the following form:
\begin{align}\label{Eq:AlphaDef}
	\alpha(\om, \cdot) := 1 + \vep \eta(\om,\cdot).
\end{align}
This means that $\alpha$ is a small random perturbation of a deterministic background medium.
Here, $\vep > 0$ denotes the perturbation parameter and the random field $\eta \in 
L^2(\Om,W_C^{1,\infty}(D))$ satisfies
\begin{align*}
	P \left\{ \om \in \Om; \| \eta(\om,\cdot) \|_{L^\infty(D)} \leq 
	1 \right\} &= 1, \\
	P \left\{ \om \in \Om; \| \nabla \eta(\om,\cdot) \|_{L^\infty(D) 
	}\leq \mu \right\} &= 1,
\end{align*}
where $\mu> 0$ is a given constant. At the end of the paper, we shall also present a procedure 
for dealing with more general random field $\alpha$. 

The numerical method presented here is based on a multi-modes representation of the electric field $\bfE$. 
The expansion yields the same deterministic Maxwell's equation with recursively defined random sources 
for all modes, which is the key for us to design an efficient MCIP-DG method and speed up the whole computational algorithm.
In the algorithm we employ an absolutely stable  MCIP-DG method to
approximate each mode function which satisfies a nearly deterministic Maxwell's system.
The acceleration of the numerical method is achieved by performing an LU decomposition 
of the IP-DG stiffness matrix for the Maxwell operator, and
all samples at every order can be obtained in an efficient manner by simple forward and backward substitutions. 
This significantly reduces the computational cost for computing the electric field $\bfE$ for each sample.
The proposed numerical method nontrivally extends our previous studies in \cite{Feng_Lin_Lorton_16, Feng_Lin_Nicholls_18, Feng_Lorton_17} to the full vector Maxwell's equations in three dimensions. 
For the Maxwell's equations, the wave-number-explicit estimation for the solution 
$\bfE$ involves extra complications arising from the estimation of $\ddiv \bfE$.
This gives rises to additional difficulties for the analysis compared to our previous 
works for the random scalar or elastic Helmholtz equations. It also imposes new constraints 
on the random media to ensure the convergence (see Section \ref{sec:PDE_Analysis} for details).

The rest of the paper is organized as follows. We derive wave-number-explicit estimates for the
solution of the random Maxwell's equations in Section \ref{sec:PDE_Analysis}. This analysis lays the foundation for 
the convergence analysis of the multi-modes expansion and the numerical analysis for the overall numerical 
algorithm. In Section \ref{sec:Multi-Modes}, we introduce the multi-modes expansion of the electric field as 
a power series of $\veps$ and establish the error estimation for its finite-modes approximation.
The Monte Carlo interior penalty discontinuous Galerkin method is presented in Section 
\ref{sec:MCIP-DG}, which is used to approximate each mode function by solving a deterministic
Maxwell's system with a random source term. In Section \ref{sec:Numerical_Procedure}, we present 
a complete numerical algorithm for solving the random Maxwell's equations
\eqref{Eq:PDE1}--\eqref{Eq:PDE2}, and derive the error estimations for the proposed algorithm.
Several numerical experiments are provided in Section \ref{sec:Numerical_Experiments} to demonstrate 
the efficiency of the method and to validate the theoretical results. We end the paper with a discussion on generalization of the proposed numerical method to more general random media in Section \ref{sec-7}.

\section{PDE analysis} \label{sec:PDE_Analysis}  
\subsection{Preliminaries}

Let $\E(\cdot)$ denote the expectation operator defined over the probability space $(\Om,\mathcal{F},P)$ and is given by
\begin{align*}
	\E (u) := \int_\Om u \, dP.
\end{align*}
Throughout the paper, we will assume the spatial domain $D \subset B_R(\bf0)$ and it is star-shaped with respect to the 
origin such that
\begin{align}\label{Eq:StarShape}
	\bfx \cdot \bfnu \geq c_0 \mbox{ on } \pa D.
\end{align}
Let $\bL^2(D) = (L^2(D))^3$ and  $\bL^2(\pa D)$ be the vector space of complex, 
vector valued square integrable functions on a domain $D$ and its boundary $\pa D$, respectively. They are equipped with the standard inner products
\begin{align*}
	(\bfu, \bfv)_D := \int_D \bfu \cdot \overline{\bfv} \, d\bfx, \qquad
	\langle \bfu, \bfv \rangle_{\pa D} := \int_{\pa D} \bfu \cdot
	\overline{\bfv} \, dS,
\end{align*}
respectively.  In addition, we define the following function spaces:
\begin{align*}
\bH(\curl, D)&:=\Big\{ \bfv\in \bL^2(D) \Big| \, \curl \bfv\in  \bL^2(D) \Big\},\\
\bH(\ddiv, D)&:=\Big\{ \bfv\in \bL^2(D) \Big| \, \ddiv \bfv\in  L^2(D) \Big\},\\
\bcV &:=\Big\{\bfv\in \bH(\curl, D) \Big| \, \bfv_T \in \bL^2(\pa D) \Big\},\\
\hat{\bcV} &:= \Big\{ \bfv\in \bH(\curl, D) \Big| \, \curl \bfv \in \bH(\curl, D) \mbox{ and } \bfv_T \in \bH(\curl, \pa D) \Big\}.
\end{align*}
The weak solution to \eqref{Eq:PDE1}--\eqref{Eq:PDE2} is defined as follows.
\begin{definition} \label{Def:WeakForm}
	Let $\bff : \Omega \to \bH(\ddiv,D)$.  A function 
	$\bfE: \Omega \to \bcV$ is called a weak 
	solution to problem \eqref{Eq:PDE1}--\eqref{Eq:PDE2} if it satisfies 
	\begin{align}
		a\big(\bfE(\om,\cdot),\bfv\big) &= (\bff(\om,\cdot), \bfv)_D 
		,  & \forall \, \bfv \in  \bcV, \label{Eq:WeakForm}
	\end{align}
	almost surely, where
	\begin{align} \label{Eq:SesqForm}
	a\big(\bfE(\om,\cdot),\bfv \big)
	&:= \big( \curl \bfE(\om,\cdot), \curl 
	\bfv \big)_D 
	- k^2 \big(\alpha(\om,\cdot) \bfE(\om,\cdot), \bfv \big)_D \\
	  &\quad- \i k \lambda 	\big \langle \bfE_T(\om,\cdot), \bfv_T \big \rangle_{\pa D}. \nonumber
\end{align}
\end{definition}

\begin{remark}\label{rem2.1}
	The pathwise formulation \eqref{Eq:WeakForm} can be replaced by an averaged 
	formulation obtained by taking expectation on both sides of \eqref{Eq:WeakForm}.  
	It can be shown that both formulations are equivalent provided that $\bff\in L^2(\Omega, \bH(\Div,D))$.  
\end{remark}

\subsection{Wavenumber-explicit solution estimates}
We start by stating some key lemmas that will be used later to establish the 
stability estimate for the solution
of \eqref{Eq:PDE1}-\eqref{Eq:PDE2}.
The following lemma gives Rellich identities for the time-harmonic Maxwell's equations.  These identities were used in \cite{Feng_Wu_14,Lorton_14} to derive solution estimates for the deterministic time-harmonic Maxwell's equations.
\begin{lemma} \label{Lem:RellichIDs}
	Let $\bfE: \Om \to \hat{\bcV}$ and
	define $\bfv := \curl \bfE \times \bfx$, then the 
	following identities hold almost surely:
	\begin{align}
		&2 \re \big(\curl \bfE(\om,\cdot), \curl \bfv(\om,\cdot) \big)_D = 
		\| \curl \bfE(\om,\cdot) \|^2_{L^2(D)} \label{Eq:Rellich1} \\
		& \qquad \qquad + \big 
		\langle \bfx \cdot \bfnu, | \curl \bfE(\om,\cdot) |^2 \big 
		\rangle_{\pa D}, \notag\\
		&2 \re \big(\bfE(\om,\cdot),\bfv(\om,\cdot) \big)_D  = - \| 
		\bfE(\om,\cdot) \|^2_{L^2(D)} -  \big \langle \bfx \cdot 
		\bfnu, |\bfE(\om,\cdot)|^2 \big \rangle_{\pa D}  
		\label{Eq:Rellich2}\\
		& \qquad \qquad + 2 \re \Big( \big(\bfx \, \ddiv 
		\bfE(\om,\cdot), \bfE(\om,\cdot) \big)_D + \big \langle \bfx \times 
		\bfE(\om,\cdot), \bfE(\om,\cdot) \times \bfnu \big \rangle_{\pa D} 
		\Big)  .\notag
	\end{align}
\end{lemma}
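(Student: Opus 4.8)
The plan is to prove both identities pathwise, i.e.\ for a fixed sample $\om\in\Om$, since the claims hold almost surely. The regularity encoded in $\hat{\bcV}$ (namely $\curl\bfE\in\bH(\curl,D)$ and $\bfE_T\in\bH(\curl,\pa D)$) is exactly what makes $\bfv=\curl\bfE\times\bfx$ admissible and renders every surface integral and integration by parts below well defined; if needed these can be reduced to smooth fields by density. The two workhorses are the vector calculus identities
\[
\curl(\ba\times\bfb)=\ba\,\ddiv\bfb-\bfb\,\ddiv\ba+(\bfb\cdot\nabla)\ba-(\ba\cdot\nabla)\bfb,
\qquad
\ddiv(\ba\times\bfb)=\bfb\cdot\curl\ba-\ba\cdot\curl\bfb,
\]
together with the divergence theorem and the auxiliary observation (valid for any admissible $\bfg$, using $\ddiv\bfx=3$)
\[
2\re\big(\bfg,(\bfx\cdot\nabla)\bfg\big)_D=\int_D(\bfx\cdot\nabla)|\bfg|^2\,d\bfx=\big\langle\bfx\cdot\bfnu,|\bfg|^2\big\rangle_{\pa D}-3\,\|\bfg\|_{L^2(D)}^2.
\]

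For \eqref{Eq:Rellich1} I would set $\bfW:=\curl\bfE$ and note $\ddiv\bfW=\ddiv\curl\bfE=0$, so the first vector identity collapses to $\curl\bfv=\curl(\bfW\times\bfx)=2\bfW+(\bfx\cdot\nabla)\bfW$ (the $\ddiv\bfx=3$ and $(\bfW\cdot\nabla)\bfx=\bfW$ terms combining to $2\bfW$). Pairing against $\curl\bfE=\bfW$ gives $(\curl\bfE,\curl\bfv)_D=2\|\bfW\|_{L^2(D)}^2+(\bfW,(\bfx\cdot\nabla)\bfW)_D$. Taking twice the real part and invoking the auxiliary observation with $\bfg=\bfW$ turns this into $4\|\bfW\|_{L^2(D)}^2-3\|\bfW\|_{L^2(D)}^2+\langle\bfx\cdot\bfnu,|\bfW|^2\rangle_{\pa D}$, which is precisely the right-hand side of \eqref{Eq:Rellich1}.

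For \eqref{Eq:Rellich2} I would first rewrite the volume pairing via the scalar triple product as $(\bfE,\bfv)_D=(\bfx\times\bfE,\curl\bfE)_D$, then integrate by parts with $\ddiv(\ba\times\bfb)$, taking $\ba=\bfx\times\bfE$ and $\bfb=\overline{\bfE}$, to move the curl onto $\bfx\times\bfE$ at the cost of a single surface term of the form $\langle\bfx\times\bfE,\bfE\times\bfnu\rangle_{\pa D}$. The first vector identity yields $\curl(\bfx\times\bfE)=\bfx\,\ddiv\bfE-2\bfE-(\bfx\cdot\nabla)\bfE$, so pairing with $\overline{\bfE}$ and taking twice the real part produces three volume contributions: $2\re(\bfx\,\ddiv\bfE,\bfE)_D$, the constant term $-4\|\bfE\|_{L^2(D)}^2$, and the $(\bfx\cdot\nabla)$-term, which by the auxiliary observation with $\bfg=\bfE$ equals $+3\|\bfE\|_{L^2(D)}^2-\langle\bfx\cdot\bfnu,|\bfE|^2\rangle_{\pa D}$; combining the two $\|\bfE\|_{L^2(D)}^2$ terms reproduces the $-\|\bfE\|_{L^2(D)}^2$ and the surface factor on the right-hand side of \eqref{Eq:Rellich2}.

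The bookkeeping in \eqref{Eq:Rellich2} is where the real care lies. One must keep the complex conjugates straight through both the triple-product rearrangement and the curl integration by parts, and verify that the surviving boundary contribution matches $2\re\langle\bfx\times\bfE,\bfE\times\bfnu\rangle_{\pa D}$ as stated, using $((\bfx\times\bfE)\times\overline{\bfE})\cdot\bfnu=(\bfx\times\bfE)\cdot(\overline{\bfE}\times\bfnu)$; I expect the sign and conjugation tracking of this single surface term, rather than any conceptual difficulty, to be the main obstacle. The volume manipulations in both identities are routine once the two vector identities and $\ddiv\bfx=3$ are in hand.
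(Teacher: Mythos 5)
Your strategy is sound, and it actually supplies more than the paper does: the paper never proves Lemma~\ref{Lem:RellichIDs}, it simply cites \cite{Feng_Wu_14,Lorton_14}, so a self-contained derivation is the right thing to attempt. Your treatment of \eqref{Eq:Rellich1} is correct: with $\bfW=\curl\bfE$ one has $\ddiv\bfW=0$, hence $\curl(\bfW\times\bfx)=2\bfW+(\bfx\cdot\nabla)\bfW$, and your auxiliary identity $2\re\big(\bfg,(\bfx\cdot\nabla)\bfg\big)_D=\big\langle\bfx\cdot\bfnu,|\bfg|^2\big\rangle_{\pa D}-3\|\bfg\|_{L^2(D)}^2$ finishes it ($4-3=1$). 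The volume bookkeeping in your argument for \eqref{Eq:Rellich2} is also right: $\curl(\bfx\times\bfE)=\bfx\,\ddiv\bfE-2\bfE-(\bfx\cdot\nabla)\bfE$, and the $-4+3=-1$ cancellation reproduces the terms $-\|\bfE\|^2_{L^2(D)}-\langle\bfx\cdot\bfnu,|\bfE|^2\rangle_{\pa D}+2\re(\bfx\,\ddiv\bfE,\bfE)_D$.

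The genuine problem is the one step you deferred --- the sign of the boundary term --- and it does not come out the way you assert. Carrying out your own integration by parts with $\ba=\bfx\times\bfE$, $\bfb=\overline{\bfE}$ gives
\begin{align*}
(\bfE,\bfv)_D=\int_D\overline{\bfE}\cdot\curl(\bfx\times\bfE)\,d\bfx-\big\langle\bfx\times\bfE,\bfE\times\bfnu\big\rangle_{\pa D},
\end{align*}
i.e.\ the surface pairing enters with a \emph{minus} sign, so your derivation produces \eqref{Eq:Rellich2} with $-2\re\langle\bfx\times\bfE,\bfE\times\bfnu\rangle_{\pa D}$, not $+$ as printed. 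This is not a slip you can algebra away: take $D=B_1(\mathbf{0})$ and $\bfE\equiv(1,0,0)$, so $\bfv=\mathbf{0}$ and the left side of \eqref{Eq:Rellich2} vanishes, while $\|\bfE\|^2_{L^2(D)}=\tfrac{4\pi}{3}$, $\langle\bfx\cdot\bfnu,|\bfE|^2\rangle_{\pa D}=4\pi$, $\ddiv\bfE=0$, and $\langle\bfx\times\bfE,\bfE\times\bfnu\rangle_{\pa D}=\int_{S^2}(x_1^2-1)\,dS=-\tfrac{8\pi}{3}$; only the minus sign makes the right side zero. So the printed statement carries a sign error (equivalently, the term should read $\langle\bfx\times\bfE,\bfnu\times\bfE\rangle_{\pa D}$; note swapping the two arguments cannot repair it, since that leaves the real part unchanged). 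Your method is correct and in fact proves the corrected identity, but by claiming the boundary contribution ``matches as stated'' you glossed over precisely the discrepancy you were supposed to settle: as written, your proposal asserts an identity that is false. The fix is to state and prove the minus-sign version; the downstream use in Theorem~\ref{Thm:PDEEstimate} survives, since the resulting term $-2k^2\langle\bfx\cdot\bfnu,|\bfE_T|^2\rangle_{\pa D}$ then enters with a favorable sign.
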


The next couple of estimates will be instrumental for the full solution estimates in Theorem \ref{Thm:PDEEstimate}.
\begin{lemma} \label{Lem:PDEEstimate1}
Let $\bfE: \Om \to \hat{\bcV}$ be the weak solution to \eqref{Eq:PDE1}--\eqref{Eq:PDE2}.  Then for any $\delta_1, \delta_2 > 0$ and $0 \leq \veps < 1$, the following estimates hold almost surely:
\begin{align}
	\label{Eq:PDEEstLem1} & \| \curl \bfE(\om,\cdot) \|^2_{L^2(D)} 
	\leq \big( k^2(1+\veps)^2 + \delta_1 \big) \| 
	\bfE(\om,\cdot) \|^2_{L^2(D)} +  \frac{1}{4 \delta_1} \| \bff(\om,\cdot) 
	\|^2_{L^2(D)} , \\
	\label{Eq:PDEEstLem2} & \| \bfE_T(\om,\cdot) \|^2_{L^2(\pa D)} 
	\leq \delta_2 \| \bfE(\om,\cdot) \|^2_{L^2(D)} + \frac{1}{4 k^2 \lambda^2 
	\delta_2} \| \bff(\om,\cdot) \|^2_{L^2(D)}.
\end{align}
\end{lemma}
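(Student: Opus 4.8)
The plan is to obtain both inequalities directly from the weak formulation \eqref{Eq:WeakForm} by taking the test function $\bfv = \bfE(\om,\cdot)$, which is admissible since $\bfE(\om,\cdot) \in \hat{\bcV} \subset \bcV$ almost surely. Suppressing the argument $(\om,\cdot)$ and using $(\curl \bfE, \curl \bfE)_D = \| \curl \bfE \|^2_{L^2(D)}$ together with $\langle \bfE_T, \bfE_T\rangle_{\pa D} = \| \bfE_T\|^2_{L^2(\pa D)}$, this produces the single scalar identity
\[
	\| \curl \bfE \|^2_{L^2(D)} - k^2 \big(\alpha^2 \bfE, \bfE\big)_D - \i k \lambda \| \bfE_T \|^2_{L^2(\pa D)} = (\bff, \bfE)_D,
\]
where the mass term carries $\alpha^2$ in agreement with the strong form \eqref{Eq:PDE1} (and with the $(1+\veps)^2$ factor appearing in the claim). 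Notice that the Rellich identities of Lemma \ref{Lem:RellichIDs} are \emph{not} needed here: this lemma is merely the energy identity obtained by testing against the solution itself, and the two estimates fall out by separating this identity into its real and imaginary parts.

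Since $\alpha = 1 + \veps\eta$ with $\eta$ real-valued and $\| \eta(\om,\cdot)\|_{L^\infty(D)} \le 1$ almost surely, the coefficient $\alpha^2$ is real and nonnegative with $\|\alpha^2\|_{L^\infty(D)} \le (1+\veps)^2$; hence $(\alpha^2\bfE,\bfE)_D = \int_D \alpha^2 |\bfE|^2\, d\bfx$ is real. Taking the real part of the identity therefore gives
\[
	\| \curl \bfE \|^2_{L^2(D)} = k^2 \big(\alpha^2 \bfE, \bfE\big)_D + \re (\bff, \bfE)_D \le k^2(1+\veps)^2 \|\bfE\|^2_{L^2(D)} + \re(\bff,\bfE)_D .
\]
To finish \eqref{Eq:PDEEstLem1} I would bound $\re(\bff,\bfE)_D \le |(\bff,\bfE)_D| \le \|\bff\|_{L^2(D)}\|\bfE\|_{L^2(D)}$ by Cauchy--Schwarz and then apply Young's inequality in the form $ab \le \delta_1 a^2 + (4\delta_1)^{-1} b^2$ with $a = \|\bfE\|_{L^2(D)}$ and $b = \|\bff\|_{L^2(D)}$, which yields exactly the factor $k^2(1+\veps)^2 + \delta_1$ on $\|\bfE\|^2_{L^2(D)}$ and the remainder $(4\delta_1)^{-1}\|\bff\|^2_{L^2(D)}$.

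For \eqref{Eq:PDEEstLem2} I would instead take the imaginary part of the same identity. Because the first two terms are real, this leaves $-k\lambda \|\bfE_T\|^2_{L^2(\pa D)} = \im(\bff,\bfE)_D$, whence
\[
	k\lambda \|\bfE_T\|^2_{L^2(\pa D)} = -\im(\bff,\bfE)_D \le |(\bff,\bfE)_D| \le \|\bff\|_{L^2(D)} \|\bfE\|_{L^2(D)} .
\]
Dividing by $k\lambda$ and applying Young's inequality with $a = \|\bfE\|_{L^2(D)}$ and $b = (k\lambda)^{-1}\|\bff\|_{L^2(D)}$ produces the bound $\delta_2 \|\bfE\|^2_{L^2(D)} + (4k^2\lambda^2\delta_2)^{-1}\|\bff\|^2_{L^2(D)}$, as claimed. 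The computation is routine throughout; the only points requiring care are the bookkeeping of the coefficient (the mass term involves $\alpha^2$, so its $L^\infty$ bound is $(1+\veps)^2$) and the reality of $\alpha$, which is precisely what lets the imaginary part decouple cleanly into the single boundary term and deliver \eqref{Eq:PDEEstLem2}.
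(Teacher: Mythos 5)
Your proof is correct and follows essentially the same route as the paper: take $\bfv = \bfE$ in the weak form \eqref{Eq:WeakForm}, split the resulting energy identity into real and imaginary parts (the imaginary part isolating $k\lambda\|\bfE_T\|^2_{L^2(\pa D)}$ since $\alpha^2$ is real), and then apply Cauchy--Schwarz and Young's inequality with the $L^\infty$ bound $\|\alpha^2\|_{L^\infty(D)} \le (1+\veps)^2$. Your observation that the Rellich identities are not needed here, and your correct reading of the mass term as carrying $\alpha^2$ (consistent with the strong form \eqref{Eq:PDE1}, despite the typo in \eqref{Eq:SesqForm}), are both in agreement with the paper's own argument.
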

\begin{proof}
	Setting $\bfv = \bfE$ in \eqref{Eq:WeakForm} yields
	\begin{align*}
		& \| \curl \bfE(\om,\cdot) \|^2_{L^2(D)} - k^2 \big((1+
		\veps \eta(\om,\cdot))^2, | \bfE(\om,\cdot) |^2 \big)_D - \i k \lambda 
		\| \bfE_T(\om,\cdot) \|_{L^2(D)}^2  \\
		& \qquad \qquad = \big(\bff(\om,\cdot), \bfE(\om,\cdot) \big)_D,
	\end{align*}
	almost surely.
	Taking the real and imaginary part separately gives
	\begin{align}
		\| \curl \bfE(\om,\cdot) \|^2_{L^2(D)} - k^2 
		\|(1+\veps \eta(\om,\cdot)) \bfE(\om,\cdot) \|^2_{L^2(D)} 
		 &= \re\big(\bff(\om,\cdot), \bfE(\om,\cdot) \big)_D, 
		 \label{Eq:PDEEstLem1a}\\
		k \lambda \| \bfE_T(\om,\cdot) \|_{L^2(D)}
		^2 &= - \im \big(\bff(\om,\cdot), \bfE(\om,\cdot) \big)_D, 
		\label{Eq:PDEEstLem2a}
	\end{align}
	almost surely.
	Applying the Cauchy-Schwarz and the Young's inequalities to 
	\eqref{Eq:PDEEstLem2a} produces
	\begin{align*}
		&k \lambda \| \bfE_T(\om,\cdot) \|^2_{L^2(\pa D)} 
		\leq k \lambda \delta_2 \| \bfE(\om,\cdot) \|^2_{L^2(D)} 
		+ \frac{1}{4 k \lambda \delta_2} \| \bff(\om,\cdot) \|^2_{L^2(\pa D)},
	\end{align*}
	almost surely.
	Thus, \eqref{Eq:PDEEstLem2} holds.  Similarly, applying the Cauchy-Schwarz 
	and Young's inequalities to \eqref{Eq:PDEEstLem1a} along with the 
	properties of the random coefficient $\eta$ gives
	\begin{align*}
		& \| \curl \bfE(\om,\cdot) \|^2_{L^2(D)} \leq \left( 
		k^2(1+\veps)^2 + \delta_1 \right) \| \bfE(\om,\cdot) 
		\|^2_{L^2(D)} + \frac{1}{4 \delta_1} \| \bff(\om,\cdot) 
		\|^2_{L^2(D)}.
	\end{align*}
	Thus, \eqref{Eq:PDEEstLem1} holds.
\end{proof}

With the Rellich identities from Lemma \ref{Lem:RellichIDs} and estimates from 
Lemma \ref{Lem:PDEEstimate1} we are now able to prove stability estimates for 
solutions to \eqref{Eq:PDE1}--\eqref{Eq:PDE2}.  The following theorem gives 
the wavenumber-explicit estimates and is the main result of this section:
\begin{theorem} \label{Thm:PDEEstimate}
	Let $\bff: \Om \to H(\ddiv,D)$ and $\bfE: \Om \to \hat{\bcV}$ be the weak 
	solution to 
	\eqref{Eq:PDE1}--\eqref{Eq:PDE2} . Then for $0 < \veps < \frac{1}{32R \mu}
	$ chosen to satisfy $\veps(2 + \veps) < \gamma_0 := \min\left\{1, \frac{1}
	{8Rk}\right\}$, the following estimates hold, almost surely:
	\begin{align}
		&\| \bfE(\om,\cdot) \|^2_{L^2(D)}  +  \| \bfE(\om,\cdot) \|
		^2_{L^2(\pa D)} \label{Eq:PDEEstimate0a} \\
		&\qquad \qquad \leq C_0 \left(\frac{1}{k} + \frac{1}{k^2} \right)^2 
		\big( \| \bff(\om,\cdot) \|^2_{L^2(D)} + \| \ddiv 
		\bff(\om,\cdot) \|^2_{L^2(D)} \big), \notag \\
		&\| \curl \bfE(\om,\cdot) \|
		^2_{L^2(D)} + \| \curl \bfE(\om,\cdot) \|
		^2_{L^2(\pa D)} \label{Eq:PDEEstimate0b} \\
		& \qquad \qquad \leq C_0 \left(1 + \frac{1}{k} \right)^2 
		\big( \| \bff(\om,\cdot) \|^2_{L^2(D)} + \| \ddiv 
		\bff(\om,\cdot) \|^2_{L^2(D)} \big), \notag \\
		&\| \ddiv \bfE(\om,\cdot) \|^2_{L^2(D)}
		\leq C_0 \left(\frac{1}{k} + \frac{1}{k^2} \right)^2 
		\big( \| \bff(\om,\cdot) \|^2_{L^2(D)} + \| \ddiv 
		\bff(\om,\cdot) \|^2_{L^2(D)} \big), \label{Eq:PDEEstimate0c}
	\end{align}
	where $C_0$ is a positive constant independent of $k$, $\om$, and $\bfE$. 
	Furthermore, if $\bff \in \bL^2(\Om,\bH(\ddiv,D))$ and 
	$\bfE \in \bL^2(\Om,\hat{\bcV})$, then the following 
	estimates hold:
	\begin{align}
		\E \big( \| \bfE \|^2_{L^2(D)} \big) + \E \big( \| \bfE \|^2_{L^2(\pa 
		D)} \big) &\leq C_0 \left(\frac{1}{k} + \frac{1}{k^2} \right)^2 
		\mathcal{M}(\bff) \label{Eq:PDEEstimate1}, \\
		\E \big( \| \curl \bfE \|
		^2_{L^2(D)} \big) + \E \big( \| \curl \bfE \|
		^2_{L^2(\pa D)} \big)
		& \leq C_0 \left(1 + \frac{1}{k} \right)^2 
		\mathcal{M}(\bff), \label{Eq:PDEEstimate2} \\
		\E \big( \| \ddiv \bfE \|^2_{L^2(D)} \big) 
		& \leq C_0 \left(\frac{1}{k} + \frac{1}{k^2} \right)^2 
		\mathcal{M}(\bff), \label{Eq:PDEEstimate3}
	\end{align}
	where $\mathcal{M}(\bff)$ is defined as
	\begin{align*}
		\mathcal{M}(\bff) := \E \big( \| \bff \|^2_{L^2(D)} \big) + \E \big( 
		\| \ddiv \bff \|^2_{L^2(D)} \big).
	\end{align*}
\end{theorem}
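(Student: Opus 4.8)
The plan is to combine a Rellich--Morawetz multiplier argument with an auxiliary identity for $\ddiv\bfE$ obtained by differentiating the equation. First I would test the weak formulation \eqref{Eq:WeakForm} with the Rellich multiplier $\bfv := \curl\bfE\times\bfx$, take real parts, and substitute the two identities \eqref{Eq:Rellich1}--\eqref{Eq:Rellich2} of Lemma \ref{Lem:RellichIDs}. Writing $\alpha^2 = 1 + (\alpha^2-1)$ and treating the constant part via \eqref{Eq:Rellich2}, this yields a master identity whose left-hand side contains the coercive quantities $\|\curl\bfE\|^2_{L^2(D)} + k^2\|\bfE\|^2_{L^2(D)}$ together with the boundary terms $\langle\bfx\cdot\bfnu,|\curl\bfE|^2\rangle_{\pa D} + k^2\langle\bfx\cdot\bfnu,|\bfE|^2\rangle_{\pa D}$. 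The star-shapedness \eqref{Eq:StarShape} bounds these boundary terms from below by $c_0\bigl(\|\curl\bfE\|^2_{L^2(\pa D)} + k^2\|\bfE\|^2_{L^2(\pa D)}\bigr)$, which is exactly what recovers the boundary norms on the left of \eqref{Eq:PDEEstimate0a}--\eqref{Eq:PDEEstimate0b}.

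The key new ingredient, absent in the scalar case, is the control of the divergence terms $(\bfx\,\ddiv\bfE,\bfE)_D$ generated by \eqref{Eq:Rellich2}. Taking the divergence of \eqref{Eq:PDE1} and using $\ddiv\curl\equiv 0$ gives $\ddiv(\alpha^2\bfE) = -k^{-2}\ddiv\bff$; expanding $\ddiv(\alpha^2\bfE) = \alpha^2\ddiv\bfE + 2\veps\alpha\,(\nabla\eta\cdot\bfE)$ and dividing by $\alpha^2$, which is bounded below by $(1-\veps)^2$ for $\veps<1$, produces the pointwise-in-$\om$ bound $\|\ddiv\bfE\|_{L^2(D)} \lesssim k^{-2}\|\ddiv\bff\|_{L^2(D)} + \veps\mu\|\bfE\|_{L^2(D)}$. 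This is already \eqref{Eq:PDEEstimate0c} once $\|\bfE\|_{L^2(D)}$ is controlled, and it also bounds the divergence contribution in the master identity: the estimate $2k^2|\re(\bfx\,\ddiv\bfE,\bfE)_D|\le 2Rk^2\|\ddiv\bfE\|\,\|\bfE\|$ splits into a harmless piece comparable to $\|\ddiv\bff\|\,\|\bfE\|$ and a dangerous piece comparable to $Rk^2\veps\mu\|\bfE\|^2$ that scales like the coercive term $k^2\|\bfE\|^2$ and must be absorbed. This forced absorption is precisely why the hypothesis $\veps<\tfrac1{32R\mu}$ is imposed.

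The remaining right-hand side terms I would bound by Cauchy--Schwarz and Young's inequality, feeding the tangential boundary quantities into Lemma \ref{Lem:PDEEstimate1}. The impedance term $2k\lambda|\langle\bfE_T,\bfv_T\rangle_{\pa D}|$ and the boundary term $2k^2|\langle\bfx\times\bfE,\bfE\times\bfnu\rangle_{\pa D}|$ are each controlled by products such as $\|\bfE_T\|_{L^2(\pa D)}\|\curl\bfE\|_{L^2(\pa D)}$ or $\|\bfE\|_{L^2(\pa D)}\|\bfE_T\|_{L^2(\pa D)}$, after which \eqref{Eq:PDEEstLem2} converts $\|\bfE_T\|^2_{L^2(\pa D)}$ into an absorbable multiple of $\|\bfE\|^2_{L^2(D)}$ plus a data term. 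The perturbation term $2k^2|\re((\alpha^2-1)\bfE,\bfv)_D|\le 2Rk^2\veps(2+\veps)\|\bfE\|\,\|\curl\bfE\|$ is where the second constraint $\veps(2+\veps)<\gamma_0\le\tfrac1{8Rk}$ enters, rendering this product absorbable into $\tfrac18\bigl(k^2\|\bfE\|^2+\|\curl\bfE\|^2\bigr)$. Choosing the Young parameters $\delta_1,\delta_2$ and the auxiliary ones small enough and invoking the two $\veps$-constraints, I can absorb every interior and boundary copy of $\|\bfE\|^2$, $\|\curl\bfE\|^2$ on the right into the coercive left-hand side, arriving at $\|\curl\bfE\|^2_{L^2(D)}+k^2\|\bfE\|^2_{L^2(D)}+c_0\|\curl\bfE\|^2_{L^2(\pa D)}+k^2c_0\|\bfE\|^2_{L^2(\pa D)}\lesssim \|\bff\|^2_{L^2(D)}+k^{-2}\|\ddiv\bff\|^2_{L^2(D)}$.

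Reading the individual norms off this single inequality gives the stated $k$-dependence: dividing the $k^2\|\bfE\|^2$ term by $k^2$ yields the $(\tfrac1k+\tfrac1{k^2})^2$ factor of \eqref{Eq:PDEEstimate0a}, the $\|\curl\bfE\|^2$ term carries the $(1+\tfrac1k)^2$ factor of \eqref{Eq:PDEEstimate0b}, and substituting the $\|\bfE\|^2$ bound back into the divergence estimate produces \eqref{Eq:PDEEstimate0c}. Since the resulting $C_0$ is independent of $\om$, the expectation estimates \eqref{Eq:PDEEstimate1}--\eqref{Eq:PDEEstimate3} follow at once by integrating the almost-sure bounds over $\Om$ against $dP$. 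I expect the main obstacle to be the bookkeeping around the divergence term: deriving the $\ddiv\bfE$ relation, carrying the non-constant coefficient $\alpha^2$ through both the Rellich substitution and the divergence identity, and checking that the two smallness conditions on $\veps$ are exactly strong enough to absorb the $O(\veps)$ and $O(\veps k)$ contributions without spoiling the $k$-explicit structure.
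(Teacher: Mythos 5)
Your proposal follows essentially the same route as the paper's proof: the Rellich multiplier $\bfv=\curl\bfE\times\bfx$ combined with Lemma \ref{Lem:RellichIDs} and star-shapedness, the divergence identity $\ddiv(\alpha^2\bfE)=-k^{-2}\ddiv\bff$ to control $\ddiv\bfE$, absorption of the tangential boundary terms through Lemma \ref{Lem:PDEEstimate1}, and the two smallness conditions on $\veps$ entering exactly where you place them (the $O(\veps\mu)$ divergence contribution and the $O(\veps k)$ perturbation term, respectively), with the expectation bounds obtained by integrating the almost-sure estimates. The argument and the roles assigned to each hypothesis match the paper's proof step for step, so no gaps to report.
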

 
\begin{proof}
	In this proof we fix $\om \in \Omega$ and the results will hold almost 
	surely.
	We demonstrate the proof for $\bfE(\om,\cdot) \in \bH^2(D)$.  The more 
	general result can be obtained by replacing $\bfE(\om,\cdot)$ with its 
	mollification $\bfE_\rho(\om,\cdot)$ and then letting $\rho \to 0$ after 
	the inequalities are obtained.
	Letting $\bfv = \curl \bfE(\om,\cdot) \times \bfx$ in \eqref{Eq:SesqForm} 
	yields
	\begin{align*}
		&2 \re a(\bfE(\om,\cdot),\bfv) = 2 \re \Big( 
		\big( \curl \bfE(\om,\cdot), \curl \bfv \big)_D - k^2 \big(\alpha(\om,
		\cdot)^2 \bfE(\om,\cdot), \bfv \big)_D\Big) \\
		& \qquad \qquad + 2 k \lambda \im \big \langle \bfE_T(\om,\cdot),
		\bfv_T \big \rangle_{\pa D}, \\
		& \qquad = 2 \re \Big( \big( \curl \bfE(\om,\cdot), \curl \bfv 
		\big)_D - k^2 \big(\bfE(\om,\cdot), \bfv \big)_D \\
		&\qquad \qquad - k^2 \Big(\vep 
		\eta(\om,\cdot)(2 + \vep \eta(\om,\cdot)) \bfE(\om,\cdot), \bfv 
		\Big)_D \Big)  + 2 k \lambda \im \big \langle \bfE_T(\om,\cdot),
		\bfv_T \big \rangle_{\pa D}.
	\end{align*}
	Applying \eqref{Eq:Rellich1} and \eqref{Eq:Rellich2} to the 
	above identity, rearranging the terms, and applying 
	\eqref{Eq:StarShape} yields the following:
	\begin{align} \label{Eq:PDEEstimate1a}
		& k^2 \| \bfE(\om,\cdot) \|_{L^2(D)}^2 +  \| 
		\curl \bfE(\om,\cdot) \|_{L^2(D)}^2 \\
		& = - k^2 \big \langle \bfx \cdot 
		\bfnu, | \bfE(\om,\cdot)|^2 \big \rangle_{\pa D} - \big \langle \bfx 
		\cdot \bfnu, | \curl \bfE(\om,\cdot)|^2 \big \rangle_{\pa D} \notag 
		\\
		& \quad - 2 \re \Big( k^2 \big(\bfx \, \ddiv 
		\bfE(\om,\cdot), \bfE(\om,\cdot) \big)_D + k^2 \big \langle \bfx 
		\times \bfE(\om,\cdot), \bfE(\om,\cdot) 
		\times \bfnu \big \rangle_{\pa D} 
		\notag \\
		& \quad + 2 k^2 \re \big(\vep \eta(\om,\cdot)(2 + \vep \eta(\om,
		\cdot)) \bfE(\om,\cdot), 
		\bfv \big)_D - 2 k \lambda \im \big \langle  \bfE_T(\om,\cdot),\bfv_T 
		\big \rangle_{\pa D} \notag \\ 
		& \quad + 2 \re a\big(\bfE(\om,\cdot),\bfv\big) \notag \\
		& \leq - c_0 k^2 \| \bfE(\om,\cdot) \|_{L^2(\pa D)}^2 
		- c_0 \| \curl \bfE(\om,\cdot) \|_{L^2(\pa D)}^2  \notag\\
		& \quad - 2 \re \Big( k^2 \big(\bfx \, \ddiv
		\bfE(\om,\cdot), \bfE(\om,\cdot) \big)_D + k^2 \big \langle \bfx 
		\times \bfE(\om,\cdot), \bfE(\om,\cdot) 
		\times \bfnu \big \rangle_{\pa D} \Big)  
		\notag \\
		& \quad + 2 k^2 \gamma_0 \re \big(\bfE(\om,\cdot), 
		\bfv \big)_D - 2 k \lambda \im \big \langle \bfE_T(\om,\cdot),\bfv_T 
		\big \rangle_{\pa D} + 2 \re a\big(\bfE(\om,\cdot),\bfv\big). \notag 
	\end{align}
	We can use the identities $\ba = \ba_T + (\ba\cdot \bfnu) 
	\ba$ and $(\ba \times \bfb) \cdot (\bfc \times \bfd) = (\ba 
	\cdot \bfc)(\bfb \cdot \bfd) - (\ba \cdot \bfd)(\bfb \cdot 
	\bfc)$ to show
	\begin{align} \label{Eq:PDEEstimate1b}
		&2k^2 \re \big \langle \bfx \times \bfE(\om,\cdot), \bfE(\om,\cdot) 
		\times \bfnu \big \rangle_{\pa D} \\ 
		& \qquad = 
		2k^2 \big\langle \bfx_T \cdot \bfE_T(\om,\cdot), \bfE(\om,\cdot) \cdot
		\bfnu \big\rangle_{\pa D} - 2k^2 \big \langle \bfx \cdot \bfnu, |
		\bfE(\om,\cdot) \times \bfnu|^2 \big \rangle_{\pa D} \notag
		\\
		& \qquad = 2k^2 \big\langle \bfx_T \cdot 
		\bfE_T(\om,\cdot), \bfE(\om,\cdot) \cdot \bfnu \big\rangle_{\pa D} - 
		2k^2 \big \langle \bfx \cdot \bfnu, |\bfE_T(\om,\cdot)|^2 \big 
		\rangle_{\pa D}. \notag
	\end{align}
	Note the last step was possible due to the following identity:
	\begin{align*}
		| \bfE_T(\om,\cdot)|^2 = | (\bfnu \times \bfE(\om,\cdot)) \times 
		\bfnu|^2 = | 
		\bfE(\om,\cdot) \times \bfnu|^2 - \big((\bfE(\om,\cdot) \times \bfnu) 
		\cdot \bfnu 
		\big)^2 = |\bfE(\om,\cdot) \times \bfnu|^2.
	\end{align*}
	By rearranging the terms of \eqref{Eq:PDEEstimate1a} and 
	applying \eqref{Eq:PDEEstimate1b} and  \eqref{Def:WeakForm} we find the 
	following:
	\begin{align*}
		&k^2 \| \bfE(\om,\cdot) \|^2_{L^2(D)} +\| \curl \bfE(\om,\cdot) \|
		^2_{L^2(D)} + c_0 k^2  \| \bfE(\om,\cdot) \|
		^2_{L^2(\pa D)} + c_0  \| \curl \bfE(\om,\cdot) \|
		^2_{L^2(\pa D)} \\
		& \qquad = -2k^2 \big\langle \bfx_T 
		\cdot \bfE_T(\om,\cdot), \bfE(\om,\cdot) \cdot \bfnu \big\rangle_{\pa 
		D} + 2k^2 \big \langle \bfx \cdot \bfnu, |\bfE_T(\om,\cdot)|^2 \big 
		\rangle_{\pa D} \\
		& \qquad \qquad + 2 k^2 \gamma_0 \re \big(\bfE(\om,\cdot), 
		\bfv \big)_D - 2 k \lambda \im \big \langle \bfE_T(\om,\cdot),\bfv_T 
		\big \rangle_{\pa D} \\
		& \qquad \qquad - 2k^2 \re \big(\bfx \, \ddiv \bfE(\om,\cdot), 
		\bfE(\om,\cdot) \big)_D + 2 \re  \big(\bff(\om,\cdot), \bfv \big)_D.
	\end{align*} 
	We apply this identity, the Cauchy-Schwarz inequality, and the Young's 
	inequality to the above and establish the following inequality:
	\begin{align*}
		&k^2 \| \bfE(\om,\cdot) \|^2_{L^2(D)} +  \| \curl \bfE(\om,\cdot) \|
		^2_{L^2(D)} + c_0 k^2 \| \bfE(\om,\cdot) \|^2_{L^2(\pa D)} + c_0 
		\| \curl \bfE(\om,\cdot) \|^2_{L^2(\pa D)} \\
		& \qquad \leq Rk^2 \left( \frac{1}{\delta_1} \| 
		\bfE_T(\om,\cdot) \|^2_{L^2(\pa D)} + \delta_1 \| \bfE(\om,\cdot) \|
		^2_{L^2(\pa D)}\right)  + 2Rk^2\left(
		\| \bfE_T(\om,\cdot) \|^2_{L^2(\pa D)}\right) \\
		& \qquad \qquad + k \lambda R \left( \frac{1}{\delta_2}
		\| \bfE_T(\om,\cdot) \|^2_{L^2(\pa D)} + \delta_2
		\| \curl \bfE(\om,\cdot) \|^2_{L^2(\pa D)}\right) \\
		& \qquad \qquad + R k^2 \gamma_0 \left( \frac{1}{\delta_3} 
		\| \bfE(\om,\cdot) \|_{L^2(D)}^2 + \delta_3 \| \curl 
		\bfE(\om,\cdot) \|_{L^2(D)}^2 \right) \\
		&\qquad \qquad + R \left( \frac{1}{\delta_4} \| \bff(\om,\cdot) \|
		^2_{L^2(D)} + \delta_4  \| \curl \bfE(\om,\cdot) \|^2_{L^2(D)} 
		\right) \\
		& \qquad \qquad - 2k^2 \re \big(\bfx \, \ddiv 
		\bfE(\om,\cdot), \bfE(\om,\cdot) \big)_D.
	\end{align*}
	Next, we choose $\delta_1 = \frac{c_0}{2 R}$, $\delta_2 = \frac{c_0}{2 
	k \lambda R}$, $\delta_3 = \frac{1}{k}$, and $\delta_4 = \frac{1}{4 R}$. 
	We also use the fact that $\gamma_0 \leq 1$ 
	implies $\veps \leq \frac{1}{2}$. Thus, from \eqref{Eq:PDEEstLem2} we 
	find
	\begin{align*}
		&k^2 \left(1 - Rk \gamma_0 \right) \| \bfE(\om,\cdot) \|
		^2_{L^2(D)} + \left( \frac{3}{4} - Rk \gamma_0 
		\right) \| \curl \bfE(\om,\cdot) \|^2_{L^2(D)} \\
		& \qquad \qquad \qquad + \frac{c_0 k^2}{2} \| \bfE(\om,\cdot) \|
		^2_{L^2(\pa D)}+ \frac{c_0}{2} \|\curl \bfE(\om,\cdot) \|
		^2_{L^2(\pa D)}\\
		& \qquad \leq \frac{2Rk^2}{c_0} \big(R + c_0 + \lambda^2 R
		\big) \| \bfE_T(\om,\cdot) \|^2_{L^2(\pa D)} -2k^2 \re 
		\big(\bfx \ddiv \bfE(\om,\cdot), \bfE(\om,\cdot) \big)_D\\
		& \qquad \qquad \qquad + 4R^2 \| \bff(\om,\cdot) \|
		^2_{L^2(D)} \\
		& \qquad \leq \frac{2Rk^2}{c_0} \big(R + c_0 + \lambda^2 R
		\big) \Big( \delta_5 \| \bfE(\om,\cdot) \|^2_{L^2(D)}+ 
		\frac{1}{4 k^2 \lambda^2 \delta_5} \| \bff(\om,\cdot) \|^2_{L^2(D)} 
		\Big)\\
		& \qquad \qquad \qquad -2k^2 \re \big(\bfx \ddiv \bfE(\om,\cdot), 
		\bfE(\om,\cdot) \big)_D + 4R^2 \| \bff(\om,\cdot) \|^2_{L^2(D)}.
	\end{align*}
	By choosing $\delta_5 = \frac{1}{8R} \left( \frac{c_0}{R + c_0 + 
	\lambda^2 R} \right)$ and rearranging the terms of this 
	inequality we find
	\begin{align} \label{Eq:PDEEstimate1c}
		&k^2 \left(\frac{3}{4} - Rk \gamma_0 \right) \| 
		\bfE(\om,\cdot) \|^2_{L^2(D)} + \left( \frac{3}{4} - Rk 
		\gamma_0 \right) \| \curl \bfE(\om,\cdot) \|^2_{L^2(D)}\\ 
		& \qquad \qquad \qquad + \frac{c_0 k^2}{2}  \| \bfE(\om,\cdot) \|
		^2_{L^2(\pa D)} + \frac{c_0}{2} \|\curl \bfE(\om,\cdot) \|
		^2_{L^2(\pa D)} \notag \\
		& \quad \leq - 2k^2 \re \big(\bfx \ddiv \bfE(\om,\cdot), 
		\bfE(\om,\cdot)\big)_D + \frac{4 R^2}{\lambda^2 c_0^2}\big(R + c_0 + 
		\lambda^2 R \big)^2 \| \bff(\om,\cdot) \|^2_{L^2(D)} \notag \\
		& \qquad \qquad \qquad + 4R^2 \| \bff(\om,\cdot) \|^2_{L^2(D)} 
		. \notag
	\end{align}
	To deal with the $\ddiv \bfE$ term we apply the divergence operator to 
	both sides of the PDE \eqref{Eq:PDE1}.  This yields
	\begin{align*}
		\ddiv\big( \alpha(\om,\cdot)^2 \bfE(\om,\cdot)\big) = -k^{-2} \ddiv 
		\bff(\om,\cdot),
	\end{align*}
	for almost surely.  Expanding the divergence on the 
	left-hand side of the equation and rearranging the terms gives
	\begin{align} \label{Eq:DivExp}
		\ddiv \bfE(\om,\cdot) = - 2\frac{\nabla \alpha(\om,\cdot)}{\alpha(\om,
		\cdot)} \cdot \bfE(\om,\cdot) - \frac{1}{\big(k \alpha(\om,\cdot)
		\big)^2} \ddiv \bff(\om,\cdot).
	\end{align}
	Here we have used the definition of $\alpha(\om,\cdot)$ and the fact that 
	$0< \veps < 1$ to divide both sides of the equation by $\alpha(\om,\cdot)
	$. We again note that $\gamma_0 \leq 1$ implies $\veps \leq \frac{1}{2}$ 
	and use the definition of $\alpha(\om,\cdot)$ to find
	\begin{align} \label{Eq:PDEEstimate1d}
		&-2k^2 \re \big(\bfx \ddiv \bfE(\om,\cdot), \bfE(\om,\cdot) \big)_D\\
		& \qquad \qquad= 2k^2 \re \bigg( 2\big( \alpha(\om,\cdot)^{-1} 
		(\nabla \alpha(\om,\cdot)) \cdot \bfE(\om,\cdot), \bfx \cdot \bfE(\om,
		\cdot)\big)_D  \notag\\
		&   \qquad \qquad +  \frac{1}{k^2} \big( 
		\bfx \alpha(\om,\cdot)^{-2} \ddiv \bff(\om,\cdot), \bfE(\om,\cdot) 
		\big)_D  \bigg) \notag \\ 
		&  \qquad \leq 8 R k^2 \mu \veps \| \bfE(\om,\cdot) \|
		^2_{L^2(D)}+ \frac{4R}{\delta_6} \| \ddiv \bff(\om,\cdot)\|^2_{L^2(D)} 
		+ 4R \delta_6 \| \bfE(\om,\cdot)\|^2_{L^2(D)}. \notag
	\end{align}
	We apply \eqref{Eq:PDEEstimate1d} with $\delta_6 = \frac{k^2}{16R}$ to 
	\eqref{Eq:PDEEstimate1c} and use $\veps \leq \frac{1}{32R\mu}$ to obtain
	\begin{align*}
		&k^2 \left(\frac{1}{4} - Rk \gamma_0 \right) \| 
		\bfE(\om,\cdot) \|^2_{L^2(D)} + \left( \frac{3}{4} - Rk \gamma_0 
		\right) \| \curl \bfE(\om,\cdot) \|^2_{L^2(D)} \\ 
		&   \qquad \qquad + \frac{c_0 k^2}{2} \| \bfE(\om,\cdot) \|
		^2_{L^2(\pa D)} \big)+ \frac{c_0}{2} \|\curl \bfE(\om,\cdot) \|
		^2_{L^2(\pa D)}\\
		& \qquad \leq \frac{4 R^2}{\lambda^2 c_0^2}\big(R + c_0 + \lambda^2 R 
		\big)^2 \| \bff(\om,\cdot) \|^2_{L^2(D)}\\
		& \qquad \qquad + 4R^2 \| \bff(\om,\cdot) \|^2_{L^2(D)} 
		+ \frac{64R^2}{k^2} \| \ddiv \bff(\om,\cdot)\|^2_{L^2(D)}.
	\end{align*}
	Since $\gamma_0 \leq \frac{1}{8Rk}$ we have
	\begin{align*}
		&\frac{k^2}{8} \| \bfE(\om,\cdot) \|^2_{L^2(D)} + \frac{5}{8} 
		\| \curl \bfE(\om,\cdot) \|^2_{L^2(D)} \\
		&\qquad \qquad \qquad+ \frac{c_0 k^2}{2}
		\| \bfE(\om,\cdot) \|^2_{L^2(\pa D)}+ \frac{c_0}{2}\|\curl 
		\bfE(\om,\cdot)\|^2_{L^2(\pa D)}\\
		& \qquad \leq \frac{4 R^2}{\lambda^2 c_0^2}\big(R + c_0 + 
		\lambda^2 R  \big)^2 \| \bff(\om,\cdot) \|^2_{L^2(D)}  \\
		& \qquad \qquad \qquad  + 4R^2 \| \bff(\om,\cdot) \|^2_{L^2(D)} 
		+ \frac{64R^2}{k^2} \| \ddiv \bff(\om,\cdot)\|^2_{L^2(D)}.
	\end{align*}
	\eqref{Eq:PDEEstimate0a} and \eqref{Eq:PDEEstimate0b} follow directly from 
	the previous inequality.\\
	From \eqref{Eq:DivExp} we find
	\begin{align*}
		\| \ddiv \bfE(\om,\cdot) \|^2_{L^2(D)}\leq 4\mu\veps\| 
		\bfE(\om,\cdot) \|^2_{L^2(D)}+ 4k^{-2}\| \ddiv \bff(\om,\cdot) \|
		^2_{L^2(D)}.
	\end{align*}
	Here we have used the fact that $\gamma_0 \leq 1$ implies $\veps \leq 
	\frac{1}{2}$.  Using $\veps \leq \frac{1}{32R\mu}$ and applying 
	\eqref{Eq:PDEEstimate0a} yields \eqref{Eq:PDEEstimate0c}.  Since 
	\eqref{Eq:PDEEstimate0a}--\eqref{Eq:PDEEstimate0c} hold almost surely and 
	$C_0$ does not depend on $\om$, then if $\bfE \in \bL^2(\Om,\hat{\bcV})$, 
	\eqref{Eq:PDEEstimate1}--\eqref{Eq:PDEEstimate3} hold.
\end{proof}

\begin{theorem} \label{Thm:PDEWellPosedness}
	Let $\bff: \Om \to \bH(\ddiv,D)$, then 
	there exists a unique solution $\bfE$  for the variational problem \eqref{Eq:WeakForm}.
\end{theorem}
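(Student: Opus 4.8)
The plan is to establish existence and uniqueness pathwise, i.e. for almost every fixed $\om\in\Om$, and then to observe that the construction is measurable in $\om$. For fixed $\om$ the coefficient $\alpha(\om,\cdot)^2=(1+\veps\eta(\om,\cdot))^2$ is a real, bounded function bounded below by $(1-\veps)^2>0$ (recall $\veps<1$), so \eqref{Eq:WeakForm} is a deterministic impedance Maxwell problem of exactly the type treated in \cite{Feng_Wu_14,Lorton_14}. I would therefore prove well-posedness by the Fredholm alternative, using the wavenumber-explicit estimate of Theorem \ref{Thm:PDEEstimate} to supply injectivity, so that triviality of the kernel will in turn force existence.

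First I would verify a G\aa rding inequality for the sesquilinear form $a$ on $\bcV$. Writing $\tilde a(\bfu,\bfv):=a(\bfu,\bfv)+\big(C\bfu,\bfv\big)_D$ with the constant $C:=2k^2(1+\veps)^2$, one has $\re\tilde a(\bfv,\bfv)=\|\curl\bfv\|_{L^2(D)}^2-k^2\|\alpha\bfv\|_{L^2(D)}^2+C\|\bfv\|_{L^2(D)}^2\ge \|\curl\bfv\|_{L^2(D)}^2+k^2\|\bfv\|_{L^2(D)}^2$ and $\im\tilde a(\bfv,\bfv)=-k\lambda\|\bfv_T\|_{L^2(\pa D)}^2$; taking the real part of $(1+\i t)\tilde a(\bfv,\bfv)$ for any fixed $t>0$ then bounds all three pieces of the $\bcV$-norm from below, so $\tilde a$ is coercive and hence invertible by the Lax--Milgram theorem. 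Since $a=\tilde a-C(\cdot,\cdot)_D$, existence will follow from the Fredholm alternative once the perturbation $(\cdot,\cdot)_D$ is shown to be compact on $\bcV$.

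The main obstacle is precisely this compactness: the embedding $\bH(\curl,D)\hookrightarrow\bL^2(D)$ is \emph{not} compact, so the perturbation is not directly compact on $\bcV$. I would resolve this with the $\alpha^2$-weighted Helmholtz decomposition $\bcV=\bcV^0\oplus\nabla S$, where $S\subset H^1(D)$ and $\bcV^0:=\{\bfw\in\bcV:\ (\alpha^2\bfw,\nabla q)_D=0\ \forall q\in S\}$. On the gradient part the curl--curl term vanishes and $a(\nabla p,\nabla p)$ is sign-definite, so $a$ is invertible there with no recourse to compactness; on $\bcV^0$ one uses that, because $D$ is convex, fields with controlled $\curl$ and vanishing (weighted) divergence embed compactly into $\bL^2(D)$, which makes the $-k^2(\alpha^2\cdot,\cdot)_D$ term compact and puts $a|_{\bcV^0}$ in the coercive-plus-compact form needed for Fredholm. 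The coupling of the two subspaces through the boundary term $\langle\bfv_T,\bfv_T\rangle_{\pa D}$ must be absorbed into the compact remainder. Granting this, the operator associated with $a$ is Fredholm of index zero.

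Finally I would close the argument by injectivity: if $\bff=\mathbf 0$, then the homogeneous weak solution $\bfE$ satisfies the hypotheses of Theorem \ref{Thm:PDEEstimate} (after the mollification $\bfE_\rho$, $\rho\to0$, used in its proof to justify the required regularity), whence \eqref{Eq:PDEEstimate0a} gives $\|\bfE\|_{L^2(D)}=0$ and so $\bfE=\mathbf 0$. By the Fredholm alternative this triviality of the kernel yields existence, and the solution is then unique. Because the solution operator depends measurably on the coefficient $\alpha(\om,\cdot)$, the pathwise solutions assemble into a measurable map $\bfE:\Om\to\bcV$, completing the proof.
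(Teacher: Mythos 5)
Your skeleton is the same as the paper's: fix $\om\in\Om$, split $\bcV$ by a weighted Helmholtz decomposition $\bcV=\bcV_0\oplus\nabla H_0^1(D)$, solve the gradient part by Lax--Milgram, and handle the remaining part via the compact embedding of $\bcV_0$ into $\bL^2(D)$ together with a G\"{a}rding inequality and the Fredholm alternative; the paper does exactly this, citing \cite{Monk_03} for the compactness and the G\"{a}rding estimate. Two minor remarks on that part. First, your worry about ``coupling of the two subspaces through the boundary term'' is vacuous: for $p\in H_0^1(D)$ one has $(\nabla p)_T=\mathbf{0}$ on $\pa D$, and since your decomposition is weighted by the same coefficient that appears in the form, both cross terms $a(\nabla p,\bfv_0)$ and $a(\bfE_0,\nabla q)$ vanish identically, so the two subproblems decouple exactly and there is nothing to absorb. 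Second, the opening Lax--Milgram argument on all of $\bcV$ is correct but ends up being a detour, as you yourself concede when you note that $(\cdot,\cdot)_D$ is not a compact perturbation on $\bcV$.

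The genuine divergence from the paper, and the one place where your proof has a real gap, is the injectivity step. The paper disposes of uniqueness by unique continuation, which costs nothing and holds for every admissible $\veps$. You instead invoke Theorem \ref{Thm:PDEEstimate}, which carries hypotheses that Theorem \ref{Thm:PDEWellPosedness} does not: the smallness conditions $0<\veps<\frac{1}{32R\mu}$ and $\veps(2+\veps)<\gamma_0$, and the regularity $\bfE:\Om\to\hat{\bcV}$. The first point means your argument only yields well-posedness for sufficiently small $\veps$, a strictly narrower statement than the theorem claims (and one that degenerates as $\mu$ grows). The second point is not free either: a homogeneous weak solution is only known to lie in $\bcV$; from $\curl\curl\bfE=k^2\alpha^2\bfE\in\bL^2(D)$ you do recover $\curl\bfE\in\bH(\curl,D)$, but the trace regularity $\bfE_T\in\bH(\curl,\pa D)$ required by $\hat{\bcV}$ (and the $\bH^2$/mollification device inside that proof) is not supplied by your appeal to ``the mollification used in its proof.'' Both problems disappear if you replace this step by the paper's route: for $\bff=\mathbf{0}$, the imaginary part of $a(\bfE,\bfE)=0$ gives $\bfE_T=\mathbf{0}$ on $\pa D$, the boundary condition then gives $\curl\bfE\times\bfnu=\mathbf{0}$, and unique continuation from this vanishing Cauchy data forces $\bfE\equiv\mathbf{0}$, with no restriction on $\veps$ and no extra regularity needed.
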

\begin{proof}
For fixed $\om \in \Om$, we give a sketch of the proof for the well-posedness in the following and refer the readers to \cite{Monk_03} for more details.
First, the function space $\bcV$ adopts the Helmholtz decomposition 
$$ \bcV = \bcV_0 + \nabla S, $$
where
$$ \bcV_0 = \{ \mathbf{u} \in \bcV \;|\; (\alpha \mathbf{u}, \nabla p )_D =0 \; \forall p \in S  \} \quad \mbox{and} \quad S =  H_0^1(D). $$
Then the function space $\bcV_0$ is compactly embedded in $\bL^2(D)$ (cf. Theorem 4.7, \cite{Monk_03}). 
Correspondingly, the electric field is decomposed as $\bfE(\ome,\cdot) = \bfE_0(\ome,\cdot)  + \nabla p(\ome,\cdot) $, where $\bfE_0 \in \bcV_0$ and $p \in S$. 
By decomposing $\bfv$ as $\bfv=\bfv_0 + \nabla q$, the variational formulation \eqref{Eq:WeakForm} becomes
\begin{align}\label{eqn:WeakFromHD}
&a\big(\bfE_0(\om,\cdot),\bfv_0\big) + a\big( \nabla p(\ome,\cdot) ,\bfv_0\big) -k^2(\alpha(\ome,\cdot) \nabla p(\om,\cdot),\nabla q )_D  \\ & \qquad \qquad \qquad = (\bff(\om,\cdot), \bfv_0)_D + (\bff(\om,\cdot), \nabla q)_D. \notag
\end{align}
The Lax-Milgram lemma ensures that there exists a unique solution $p \in S$ for the variational problem
$$ -k^2(\alpha(\ome,\cdot) \nabla p(\om,\cdot), \nabla q )_D =  (\bff(\om,\cdot), \nabla q)_D \quad q\in S.  $$ 
In addition,
$$ ||\nabla p(\om,\cdot)||_{\bL^2(D) } \le c ||\bff||_{\bL^2(D) }  $$
for some positive constant $c$. As such \eqref{eqn:WeakFromHD} reduces to
\begin{equation}\label{eqn:WeakFromRD}
a\big(\bfE_0(\om,\cdot),\bfv_0\big)  = (\bff(\om,\cdot), \bfv_0)_D - a\big( \nabla p(\ome,\cdot) ,\bfv_0\big) \quad \forall\bfv_0\in\bcV_0.  
\end{equation}
It can be shown that $a\big(\bfE_0,\bfE_0\big)$ satisfies a G\"{a}rding type inequality over the space $\bcV_0$ (cf. Lemma 4.10, \cite{Monk_03}). By the compact embedding of $\bcV_0$ in $\bL^2(D)$
and the uniqueness of the solution via unique continuation,  the Fredholm Alternative implies that there exists a unique solution $\bfE_0(\om,\cdot)$ of \eqref{eqn:WeakFromRD}.
Hence the well-posedness of \eqref{eqn:WeakFromHD} follows and its solution is given by $\bfE(\om,\cdot) =\bfE_0(\om,\cdot)+\nabla p(\om,\cdot)$.
This proves that the variational problem \eqref{Eq:WeakForm} attains solution almost surely. The uniqueness of the solution holds in the sense
that two functions $\bfE_1=\bfE_2$ ($\Omega \to \bcV$) if the measure of the set $\{\, \om \in \Om \,|\, \bfE_1(\om,\cdot)\neq\bfE_2(\om,\cdot) \, \}$ is zero.

\end{proof}

\begin{remark}
If $\bbf\in L^2(\Ome,\bH^1(\Div,D))$, then the unique solution $\bfE$ of \eqref{Eq:PDE1}--\eqref{Eq:PDE2}
also satisfies estimates \eqref{Eq:PDEEstimate1}--\eqref{Eq:PDEEstimate3}. 
\end{remark}

\section{Multi-modes representation of the solution and its finite modes 
approximation} \label{sec:Multi-Modes}
The goal of this section is to show that  the solution to  
\eqref{Eq:PDE1}--\eqref{Eq:PDE2} admits a power series expansion in terms of the 
perturbation parameter  $\veps$ for sufficiently small $\veps$.  
This power series expansion will be used to design the efficient algorithm for solving the time-harmonic Maxwell's equations in random media.  
We begin by assuming this expansion formally, followed by analysis which justifies the expansion.

Assume that $\bfE^\veps$ solves \eqref{Eq:PDE1}--\eqref{Eq:PDE2} and takes the 
following form:
\begin{align} \label{Eq:MultiModes}
	\bfE^\veps = \sum_{n=0}^\infty \veps^n \bfE_n.
\end{align}
We call \eqref{Eq:MultiModes} the multi-modes expansion of the solution to 
\eqref{Eq:PDE1}--\eqref{Eq:PDE2}, which will be justified later in this 
section. Due to the scalability of the Maxwell's equations, without loss of 
generality, in the rest of the paper, we assume that $k \geq 1$ and $D$ lies in the unit disk such that 
$D \subset B_1(\textbf{0})$.

By substituting \eqref{Eq:MultiModes} into \eqref{Eq:PDE1}, we see that
\begin{align*}
	\bff &= \curl \curl \bfE^\veps - k^2 \alpha^2 \bfE^\veps \\
	&= \sum_{n=0}^\infty \veps^n \Big( \curl \curl \bfE_n - k^2 \big(1 + 
	2\veps \eta + \veps^2 \eta^2 \big) \bfE_n \Big)\\
	&= \curl \curl \bfE_0 - k^2 \bfE_0 + \veps \big(\curl \curl \bfE_1 - k^2 
	\bfE_1 - 2k^2 \eta \bfE_0\big)\\
	& \qquad + \sum_{n = 2}^\infty \veps^n \big( \curl \curl \bfE_n - k^2 
	\bfE_n - 2k^2 \eta \bfE_{n-1} - k^2\eta^2 \bfE_{n-2} \big).
\end{align*}
By matching coefficients of $\veps^n$ order terms we see that each 
individual mode function satisfies the following equations in $\Om \times D
$:
\begin{align}
	&\bfE_{-1} \equiv \mathbf{0}, \label{Eq:ModePDE1} \\
	& \curl \curl \bfE_0 - k^2 \bfE_0 = \bff, \label{Eq:ModePDE2} \\
	& \curl \curl \bfE_n - k^2 \bfE_n = 2k^2 \eta \bfE_{n-1} + k^2\eta^2 
	\bfE_{n-2} \qquad \forall n \geq 1 \label{Eq:ModePDE3}.
\end{align}
Similarly, each mode function satisfies the following boundary condition on $\Om \times \pa D$:
\begin{align}
	&\curl \bfE_n \times \bfnu - \i k \lambda \bfE_n = \mathbf{0} \qquad 
	\forall  n \geq 0. \label{Eq:ModePDE4}
\end{align}

An important feature of the multi-modes expansion of the solution 
\eqref{Eq:MultiModes} is that each mode function $\bfE_n$ satisfies
the same time-harmonic Maxwell's system with recursively defined random sources.
This is the key to develop the efficient algorithm discussed later in Section \ref{sec:Numerical_Procedure}.
The following theorem gives stability estimates for each mode function $\bfE_n
$. This theorem is a consequence of Theorem \ref{Thm:PDEEstimate} with $
\veps = 0$ along with the recursive relationship defined in 
\eqref{Eq:ModePDE3}.
\begin{theorem} \label{Thm:MultiModes}
	Let $\bff: \Om \to \bH(\ddiv,D)$. Then for each $n \geq 0$, 
	there exists a 
	unique solution $\bfE_n: \Om \to \bcV$ satisfying 
	\eqref{Eq:ModePDE2} 
	and \eqref{Eq:ModePDE4} for $n = 0$ and \eqref{Eq:ModePDE3} and 
	\eqref{Eq:ModePDE4} for $n \geq 1$ (in the sense of Definition 
	\ref{Def:WeakForm}).  Moreover, for $n \geq 0$, $\bfE_n$ satisfies the 
	following stability estimates:
	\begin{align}
		&\| \bfE_n(\om,\cdot) \|^2_{L^2(D)} +  \| \bfE_n(\om,\cdot) \|
		^2_{L^2(\pa D)} 
		\label{Eq:ModeEst0a} \\
		& \qquad \qquad \qquad\leq C(n,k) \left(\frac{1}{k} + \frac{1}{k^2} 
		\right)^2 \left(\| \bff(\om,\cdot) \|^2_{L^2(D)} + \| 
		\ddiv \bff(\om,\cdot) \|^2_{L^2(D)} \right), \notag \\
		&  \| \curl \bfE_n(\om,\cdot) \|
		^2_{L^2(D)} + \| \curl \bfE_n(\om,\cdot) \|
		^2_{L^2(\pa D)} \label{Eq:ModeEst0b} \\
		& \qquad \qquad \qquad \leq C(n,k) \left(1 + \frac{1}{k} \right)^2 
		\left(\| \bff(\om,\cdot) \|^2_{L^2(D)} + \| 
		\ddiv \bff(\om,\cdot) \|^2_{L^2(D)} \right), \notag \\
		& \| \ddiv \bfE_n(\om,\cdot) \|^2_{L^2(D)} \label{Eq:ModeEst0c} \\
		& \qquad \qquad \qquad \leq C(n,k) 
		\left(\frac{1}{k} + \frac{1}{k^2} \right)^2 \left(\| \bff(\om,\cdot) 
		\|^2_{L^2(D)} + \| \ddiv \bff(\om,\cdot) \|^2_{L^2(D)} \right), 
		\notag
	\end{align}
	where
	\begin{align*}
		C(0,k) := C_0, \qquad C(n,k) := 7^{2n-1}C_0^{n+1}(1+k)^{2n}(1+\mu)^{2n} 
		\qquad\forall n \geq 1.
	\end{align*}
	Furthermore, if $\bff \in \bL^2(\Om,\bH(\ddiv,D))$ and $\bfE_n \in 
	\bL^2(\Om,\bH^1(D))$, then the following estimates 
	hold
	\begin{align}
		&\E \big( \| \bfE_n \|^2_{L^2(D)} \big) + \E \big( \| \bfE_n \|
		^2_{L^2(\pa D)} \big) 
		\label{Eq:ModeEst1} \\
		& \qquad \qquad \qquad\leq C(n,k) \left(\frac{1}{k} + \frac{1}{k^2} 
		\right)^2 \mathcal{M}(\bff), \notag \\
		& \E \big( \| \curl \bfE_n \|
		^2_{L^2(D)} \big) + \E \big( \| \curl \bfE_n \|
		^2_{L^2(\pa D)} \big) \label{Eq:ModeEst2} \\
		& \qquad \qquad \qquad \leq C(n,k) \left(1 + \frac{1}{k} \right)^2 
		\mathcal{M}(\bff), \notag \\
		&\E \big( \| \ddiv \bfE_n \|^2_{L^2(D)} \big) \leq C(n,k) 
		\left(\frac{1}{k} + \frac{1}{k^2} \right)^2 \mathcal{M}(\bff), 
		\label{Eq:ModeEst3}
	\end{align}
	where
	\begin{align*}
		\mathcal{M}(\bff) := \E \big( \| \bff \|^2_{L^2(D)} \big) + \E \big( 
		\| \ddiv \bff \|^2_{L^2(D)} \big).
	\end{align*}
\end{theorem}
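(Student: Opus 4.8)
The plan is to argue by induction on $n$, exploiting the fact that every mode function solves the \emph{same} deterministic Maxwell system $\curl\curl\bfE_n - k^2\bfE_n = \bfg_n$ subject to the impedance condition \eqref{Eq:ModePDE4} — that is, problem \eqref{Eq:PDE1}--\eqref{Eq:PDE2} with $\veps=0$ and a source
\begin{align*}
\bfg_0 := \bff, \qquad \bfg_n := 2k^2\eta\,\bfE_{n-1} + k^2\eta^2\,\bfE_{n-2}\quad (n\geq 1).
\end{align*}
The induction hypothesis at step $n$ is that $\bfE_{n-1},\bfE_{n-2}$ exist, lie in $\bcV$, and satisfy \eqref{Eq:ModeEst0a}--\eqref{Eq:ModeEst0c}. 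First I would verify that $\bfg_n\in\bH(\ddiv,D)$ almost surely: since $\eta(\om,\cdot)\in W^{1,\infty}(D)$ and $\bfE_{n-1},\bfE_{n-2}\in\bH(\ddiv,D)$ by the hypothesis, the product rule $\ddiv(\eta\bfv)=\nabla\eta\cdot\bfv+\eta\,\ddiv\bfv$ shows $\ddiv\bfg_n\in L^2(D)$. With $\bfg_n\in\bH(\ddiv,D)$ in hand, Theorem \ref{Thm:PDEWellPosedness} (with $\veps=0$) yields a unique $\bfE_n:\Om\to\bcV$, and Theorem \ref{Thm:PDEEstimate} (again with $\veps=0$, so its $\eta$-terms drop out and the constant is exactly $C_0$; the estimates, proved for $\hat{\bcV}$, extend by the mollification argument in that proof) furnishes the three bounds with $\mathcal{M}(\bff)$ replaced by $\|\bfg_n\|_{L^2(D)}^2+\|\ddiv\bfg_n\|_{L^2(D)}^2$.

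The heart of the argument is to convert this source bound into the recursion for $C(n,k)$. Using $\|\eta\|_{L^\infty(D)}\leq 1$ and $\|\nabla\eta\|_{L^\infty(D)}\leq\mu$ together with the product rule, I would estimate, for almost every $\om$,
\begin{align*}
\|\bfg_n\|_{L^2(D)} &\leq 2k^2\|\bfE_{n-1}\|_{L^2(D)} + k^2\|\bfE_{n-2}\|_{L^2(D)},\\
\|\ddiv\bfg_n\|_{L^2(D)} &\leq 2k^2\mu\|\bfE_{n-1}\|_{L^2(D)} + 2k^2\|\ddiv\bfE_{n-1}\|_{L^2(D)}\\
&\quad + 2k^2\mu\|\bfE_{n-2}\|_{L^2(D)} + k^2\|\ddiv\bfE_{n-2}\|_{L^2(D)},
\end{align*}
then square, add, and substitute the inductive bounds \eqref{Eq:ModeEst0a} and \eqref{Eq:ModeEst0c} for $\bfE_{n-1}$ and $\bfE_{n-2}$. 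The decisive algebraic simplification is that every resulting term carries the factor $\left(\tfrac{1}{k}+\tfrac{1}{k^2}\right)^2 k^4=(1+k)^2$, which produces the $(1+k)^{2n}$ growth; the coefficients $\mu$ generated by $\nabla\eta$ are absorbed into $(1+\mu)^2$; and the outer application of Theorem \ref{Thm:PDEEstimate} contributes one more factor $C_0$. Collecting these and using the monotonicity $C(n-2,k)\leq C(n-1,k)$ (so that the two-term recursion closes on a single multiple of $C(n-1,k)$) gives $C(n,k)\leq 7^2\,C_0(1+k)^2(1+\mu)^2\,C(n-1,k)$ for $n\geq 2$, which unwinds to the claimed closed form.

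The base cases anchor the induction: $n=0$ is exactly Theorem \ref{Thm:PDEEstimate} with $\veps=0$ and source $\bff$, giving $C(0,k)=C_0$; and $n=1$ uses only $\bfE_0$ (since $\bfE_{-1}\equiv\mathbf{0}$), producing the slightly different constant $C(1,k)=7C_0^2(1+k)^2(1+\mu)^2$. The main obstacle is precisely the constant bookkeeping: one must track the Cauchy--Schwarz and Young constants through the two contributions to $\bfg_n$ and through the three separate estimates of Theorem \ref{Thm:PDEEstimate} carefully enough that the accumulated numerical factor stays below $7^{2n-1}$, while simultaneously keeping the $\mu$-dependence within $(1+\mu)^{2n}$; the two-term nature of \eqref{Eq:ModePDE3} makes this step delicate, and it is resolved by the monotonicity of $C(\cdot,k)$. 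Finally, since all the pathwise estimates \eqref{Eq:ModeEst0a}--\eqref{Eq:ModeEst0c} hold almost surely with $C(n,k)$ independent of $\om$, integrating over $\Om$ and invoking the hypotheses $\bff\in\bL^2(\Om,\bH(\ddiv,D))$ and $\bfE_n\in\bL^2(\Om,\bH^1(D))$ yields \eqref{Eq:ModeEst1}--\eqref{Eq:ModeEst3} at once.
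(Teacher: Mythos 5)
Your proposal is correct and follows essentially the same route as the paper: induction on $n$, applying the wavenumber-explicit stability estimates of Theorem \ref{Thm:PDEEstimate} (with $\veps=0$) to the recursively defined source, bounding $\|\bfS_n\|_{L^2(D)}$ and $\|\ddiv\bfS_n\|_{L^2(D)}$ via $\|\eta\|_{L^\infty}\leq 1$, $\|\nabla\eta\|_{L^\infty}\leq\mu$ and the product rule, exploiting the identity $k^4\left(\tfrac1k+\tfrac1{k^2}\right)^2=(1+k)^2$ and the monotonicity $C(n-2,k)\leq C(n-1,k)$ to close the two-term recursion, and then integrating the pathwise bounds over $\Om$ for \eqref{Eq:ModeEst1}--\eqref{Eq:ModeEst3}. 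The constant bookkeeping you describe matches the paper's (the accumulated factor $48\,C_0(1+k)^2(1+\mu)^2\,C(n-1,k)$ is absorbed into $C(n,k)$ since $48\leq 7^2$), so no changes are needed.
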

\begin{proof}
Existence and uniqueness of each mode function $\bfE_n$ is a consequence of 
stability estimates \eqref{Eq:ModeEst0a}--\eqref{Eq:ModeEst0c} and an argument 
similar to the proof of Theorem \ref{Thm:PDEWellPosedness}.  Also, 
\eqref{Eq:ModeEst1}--\eqref{Eq:ModeEst3} are direct consequences of 
\eqref{Eq:ModeEst0a}--\eqref{Eq:ModeEst0c}.  Thus, we focus our attention on 
proving \eqref{Eq:ModeEst0a}--\eqref{Eq:ModeEst0c}.

For the rest of the proof we fix $\om \in \Om$ and all estimates and 
identities will hold almost surely.
By definition each mode function $\bfE_n$ satisfies \eqref{Eq:PDE1}--\eqref{Eq:PDE2}
with $\veps = 0$ in the left-hand side Maxwell's differential 
operator and source function $\bff$ defined through a recursive relationship 
given in \eqref{Eq:ModePDE1}--\eqref{Eq:ModePDE3}.  Hence we can apply Theorem 
\ref{Thm:PDEEstimate} to obtain stability estimates for each mode function. 
Beginning with the first mode function $\bfE_0$, Theorem \ref{Thm:PDEEstimate} 
yields
\begin{align*}
	&\| \bfE_0(\om,\cdot) \|^2_{L^2(D)}+ \| \bfE_0(\om,\cdot) \|^2_{L^2(\pa 
	D)} \\ 
	& \qquad \qquad \leq C_0 \left( \frac{1}{k} + \frac{1}{k^2} \right)^2 
	\left(\| \bff(\om,\cdot) \|^2_{L^2(D)} + \| \ddiv \bff(\om,\cdot) \|
	^2_{L^2(D)} \right),
	 \\
	&\| \curl \bfE_0(\om,\cdot) \|^2_{L^2(D)}+ \| \curl \bfE_0(\om,\cdot) \|
	^2_{L^2(\pa D)} \\ 
	& \qquad \qquad \leq C_0 \left( \frac{1}{k} + \frac{1}{k^2} \right)^2 
	\left(\| \bff(\om,\cdot) \|^2_{L^2(D)} + \| \ddiv \bff(\om,\cdot) \|
	^2_{L^2(D)} \right),
	\\
	&\| \ddiv \bfE_0(\om,\cdot) \|^2_{L^2(D)} \\
	& \qquad \qquad \leq C_0 \left( \frac{1}{k} + \frac{1}{k^2} \right)^2 
	\left(\| \bff(\om,\cdot) \|^2_{L^2(D)} + \| \ddiv \bff(\om,\cdot) \|
	^2_{L^2(D)} \right).
\end{align*} 
Thus, \eqref{Eq:ModeEst0a}--\eqref{Eq:ModeEst0c} are verified for $n = 0$. We will prove \eqref{Eq:ModeEst0a}--\eqref{Eq:ModeEst0c} for $n > 0$ by induction on $n$.  Assume \eqref{Eq:ModeEst0a}--\eqref{Eq:ModeEst0c} hold for all $n \leq \ell - 1$.  Then for $\bfE_{\ell}$ we apply Theorem \ref{Thm:PDEEstimate} and find
\begin{align*}
	&\| \bfE_\ell(\om,\cdot) \|^2_{L^2(D)} + \| \bfE_\ell(\om,\cdot) \|
	^2_{L^2(\pa D)} \\
	& \quad \leq 2 C_0 \left( \frac{1}{k} + \frac{1}{k^2} \right)^2 \left(
	4k^4 \| \bfE_{\ell-1}(\om,\cdot) \big\|^2_{L^2(D)} + 4k^4 \|\ddiv 
	\big(\eta(\om,\cdot) \bfE_{\ell-1}(\om,\cdot)\big) \big\|^2_{L^2(D)} \right. \\
	& \quad \qquad \left. +  k^4 \| \bfE_{\ell-2}(\om,\cdot) \|^2_{L^2(D)} 
	+ k^4 \big\|\ddiv \big(\eta(\om,\cdot) \bfE_{\ell-2}(\om,\cdot)\big) \big\|^2_{L^2(D)}
	\right)\\
	& \quad \leq 2 C_0 k^4 \left( \frac{1}{k} + \frac{1}{k^2} \right)^2 
	\left( (4 + 8\mu^2) \| \bfE_{\ell-1}(\om,\cdot) \|^2_{L^2(D)}+ 8 
	\|\ddiv \bfE_{\ell-1}(\om,\cdot) \|^2_{L^2(D)}\right. \\
	& \quad \qquad \left. +  (1 + 4\mu^2) \| \bfE_{\ell-2}(\om,\cdot) \|
	^2_{L^2(D)} + 2 \|\ddiv \bfE_{\ell-2}(\om,\cdot) \|^2_{L^2(D)}\right).
\end{align*}
Here we have used the properties of $\eta$ which yield the following inequalities
\begin{align*}
	\big| \ddiv \big(\eta(\om,\cdot) \bfE(\om,\cdot)\big) \big|^2 \leq 2 | \ddiv \bfE(\om,
	\cdot) |^2 + 2\mu^2 | \bfE(\om,\cdot) |^2, \\
	\big| \ddiv \big(\eta(\om,\cdot)^2 \bfE(\om,\cdot)\big) \big|^2 \leq 2 | \ddiv \bfE(\om,\cdot) |^2 + 4\mu^2 | \bfE(\om,\cdot) |^2,
\end{align*}
almost surely.
From the inductive hypothesis, we have
\begin{align*}
	&\| \bfE_\ell(\om,\cdot) \|^2_{L^2(D)} + \| \bfE_\ell(\om,\cdot)\|^2_{L^2(\pa D)} \\
	& \quad \leq 2 C_0 k^4 \left( \frac{1}{k} + \frac{1}{k^2} \right)^2 
	\left( (12 + 8\mu^2) C(\ell-1,k) \left( \frac{1}{k} + \frac{1}{k^2} 	\right)^2 \right. \\
	& \qquad  \left. +  (1 + 4\mu^2)  C(\ell-2,k) \left( \frac{1}{k} + 
	\frac{1}{k^2} \right)^2 \right) \left(\| \bff(\om,\cdot) \|^2_{L^2(D)} + 
	\| \ddiv \bff(\om,\cdot) \|^2_{L^2(D)} \right) \\
	& \quad \leq 24 C_0 (1+k)^2 (1 + \mu)^2 C(\ell-1,k) \left(1 + 
	\frac{C(\ell-2,k)}{C(\ell-1,k)} \right)\left( \frac{1}{k} + \frac{1}{k^2}	\right)^2 \\
	& \qquad \times \left(\| \bff(\om,\cdot) \|^2_{L^2(D)} + 
	\| \ddiv \bff(\om,\cdot) \|^2_{L^2(D)} \right) 
\end{align*}
\begin{align*}
	& \quad \leq 48 C_0 (1+k)^2 (1 + \mu)^2 C(\ell-1,k) \left( \frac{1}{k} + 	\frac{1}{k^2} \right)^2 \\
	& \qquad \times \left(\| \bff(\om,\cdot) \|^2_{L^2(D)} + \| \ddiv 
	\bff(\om,\cdot) \|^2_{L^2(D)} \right) \\
	& \quad \leq C(\ell,k) \left( \frac{1}{k} + \frac{1}{k^2} \right)^2 
	\left(\| \bff(\om,\cdot) \|^2_{L^2(D)} + \| \ddiv \bff(\om,\cdot) \|^2_{L^2(D)} \right).
\end{align*}
Here we have used the definition of $C(\ell,k)$ which implies $
\frac{C(\ell-2,k)}{C(\ell-1,k)} \leq 1$.  Thus, \eqref{Eq:ModeEst0a} holds for 
$n = \ell$.  By similar arguments \eqref{Eq:ModeEst0b} and 
\eqref{Eq:ModeEst0c} hold for $n = \ell$.  Therefore, by induction on $n$ 
\eqref{Eq:ModeEst0a}--\eqref{Eq:ModeEst0c} hold for all $n \geq 0$.
\end{proof}

For the rest of the analysis of this paper the mode functions $\bfE_n$ and the multi-modes solution $\bfE^\veps$ are assumed to be in $L^2(\Om,\bcV)$.
As an immediate consequence Theorem \ref{Thm:MultiModes},
we prove the following theorem which justifies the multi-modes expansion of the solution given in \eqref{Eq:MultiModes}.
\begin{theorem} \label{Thm:MultiModesSolution}
	Let $\bff \in \bL^2\big(\Om,\bH(\ddiv,D)\big)$ and $\bfE_n$ be the same as 
	those defined 
	in Theorem \ref{Thm:MultiModes}.  Then $\bfE^\veps$ defined by 
	\eqref{Eq:MultiModes} satisfies \eqref{Eq:PDE1}--\eqref{Eq:PDE2} (in 
	the sense of Definition \ref{Def:WeakForm}) provided that $\sigma := 
	7 \veps C_0^{\frac{1}{2}} (1 + k)(1 + \mu) < 1$.
\end{theorem}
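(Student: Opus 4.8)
The plan is to proceed in two stages. First I would show that the series \eqref{Eq:MultiModes} converges absolutely in $L^2(\Om,\bcV)$ under the smallness condition, and then I would verify that its sum satisfies the weak formulation \eqref{Eq:WeakForm} in the sense of Definition \ref{Def:WeakForm}. The role of the hypothesis $\sigma<1$ is precisely to geometrically majorize the tail of the series through the stability constants $C(n,k)$ furnished by Theorem \ref{Thm:MultiModes}.

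For the convergence I would take square roots in the mode estimates \eqref{Eq:ModeEst0a}--\eqref{Eq:ModeEst0c} (or their expectation versions \eqref{Eq:ModeEst1}--\eqref{Eq:ModeEst3}). Since $k\geq 1$, the prefactors $(1/k+1/k^2)^2$ and $(1+1/k)^2$ are uniformly bounded, so each piece of the $\bcV$-norm of $\bfE_n$ is controlled by $C(n,k)^{1/2}$ times $\mathcal{M}(\bff)^{1/2}$. The key algebraic observation is that, for $n\geq 1$,
\[
\veps^n\, C(n,k)^{1/2} = 7^{-1/2}C_0^{1/2}\Big(7\veps\, C_0^{1/2}(1+k)(1+\mu)\Big)^n = 7^{-1/2}C_0^{1/2}\,\sigma^n ,
\]
so that $\sum_{n\geq 0}\veps^n\|\bfE_n\|_{\bcV}$ is dominated by a constant multiple of the geometric series $\sum_n\sigma^n$, which is finite exactly when $\sigma<1$. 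Because Theorem \ref{Thm:MultiModes} bounds the $L^2(D)$-norm, the $\curl$-norm, and the boundary traces all by the same $C(n,k)$, this majorization holds in the full norm of $\bcV$; hence $\bfE^\veps:=\sum_n\veps^n\bfE_n$ is a well-defined element of $L^2(\Om,\bcV)$ and the partial sums converge to it in that norm.

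To show $\bfE^\veps$ solves \eqref{Eq:WeakForm}, I would split the sesquilinear form \eqref{Eq:SesqForm} as $a(\bfE^\veps,\bfv)=a_0(\bfE^\veps,\bfv)-k^2\big((2\veps\eta+\veps^2\eta^2)\bfE^\veps,\bfv\big)_D$, where $a_0$ denotes the form obtained by setting $\veps=0$ in $\alpha$. By continuity of $a_0$ on $\bcV$ together with the norm convergence just established, one may interchange the form with the sum and use that each mode solves its own weak problem, namely $a_0(\bfE_0,\bfv)=(\bff,\bfv)_D$ and $a_0(\bfE_n,\bfv)=\big(2k^2\eta\,\bfE_{n-1}+k^2\eta^2\bfE_{n-2},\bfv\big)_D$ for $n\geq 1$, in accordance with \eqref{Eq:ModePDE2}--\eqref{Eq:ModePDE4}. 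Summing against $\veps^n$ and reindexing the two resulting series (using the convention $\bfE_{-1}\equiv\mathbf{0}$ from \eqref{Eq:ModePDE1}) collapses them into $(\bff,\bfv)_D+k^2\big((2\veps\eta+\veps^2\eta^2)\bfE^\veps,\bfv\big)_D$, which exactly cancels the perturbation term and leaves $a(\bfE^\veps,\bfv)=(\bff,\bfv)_D$.

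The main obstacle I anticipate is the rigorous justification of this term-by-term manipulation of the form: one must ensure convergence not merely in $\bL^2(D)$ but in the full $\bcV$-norm, so that $\curl\bfE^\veps=\sum_n\veps^n\curl\bfE_n$ and the boundary traces pass to the limit. This is secured precisely because Theorem \ref{Thm:MultiModes} supplies matching geometric bounds for the $\curl$-norm and the boundary terms, and not only for the $\bL^2(D)$ part; once that is in hand, the reindexing and the cancellation are routine bookkeeping.
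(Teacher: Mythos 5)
Your proof is correct, but the verification step takes a genuinely different route from the paper. For the convergence part both arguments are essentially the same geometric majorization: your identity $\veps^n C(n,k)^{1/2} = 7^{-1/2}C_0^{1/2}\sigma^n$ is exactly right, and whether one phrases it as absolute convergence (you) or as a Cauchy-sequence estimate on the partial sums (the paper) is immaterial. The difference lies in how the limit is shown to solve \eqref{Eq:WeakForm}. The paper keeps the \emph{full perturbed} form $a(\cdot,\cdot)$ and tests it against the partial sum $\bfE^\veps_N$: the recursive mode equations produce a telescoping identity
\[
\int_{\Om} a(\bfE^\veps_N , \bfv) \, dP = \int_\Om \Big( \big(\bff, \bfv \big)_D - k^2 \veps^N \big(\eta(2+\veps \eta) \bfE_{N-1} + \eta^2 \bfE_{N-2}, \bfv \big)_D \Big)\, dP,
\]
whose remainder is then shown to vanish like $\sigma^{N-1}$, and the conclusion follows by passing to the limit in the averaged formulation (invoking Remark \ref{rem2.1} for equivalence with the pathwise one). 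You instead apply the \emph{unperturbed} form $a_0$ term by term to the limit $\bfE^\veps$, justified by continuity of $a_0$ on $\bcV$ and norm convergence, then reindex the two series and observe that they reproduce exactly the perturbation $k^2\big((2\veps\eta+\veps^2\eta^2)\bfE^\veps,\bfv\big)_D$, which cancels. Each approach buys something: the paper's telescoping identity with its explicit $O(\veps^N)$ remainder is reused verbatim to prove the finite-modes error bound of Theorem \ref{Thm:FiniteModesError}, so it earns its keep later; your version is self-contained, avoids the limit-of-remainders argument, and, if you run it with the pathwise estimates \eqref{Eq:ModeEst0a}--\eqref{Eq:ModeEst0c} (which give a.s.\ absolute convergence of the series in $\bcV$), it verifies Definition \ref{Def:WeakForm} directly in its pathwise form without needing the equivalence of Remark \ref{rem2.1}. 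One small caution: if you only establish convergence in $L^2(\Om,\bcV)$, as the body of your argument states, then the pathwise identity does not follow immediately; make sure the final write-up uses the almost-sure estimates for the term-by-term step, or else verifies the averaged formulation and cites the equivalence as the paper does.
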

\begin{proof}
	The proof consists of two parts: (i) To show that the infinite series defined 
	in \eqref{Eq:MultiModes} converges in $\bL^2(\Om,\hat{\bcV})$; (ii) To show 
	that the limit satisfies \eqref{Eq:PDE1}--\eqref{Eq:PDE2}.  To carry out 
	the analysis we begin by defining the finite sum
	\begin{align}
	\bfE^\veps_N = \sum_{n=0}^N \veps^n \bfE_n. \label{Eq:FiniteModes}
	\end{align}
	This finite sum will be referred to as the finite mode expansion of the solution.
	
	To show that $\bfE^\veps_N$ converges as $N \to \infty$, we show that it is 
	a Cauchy sequence.  For any fixed positive integer $p$ we apply Theorem 
	\ref{Thm:MultiModes}, Schwarz inequality, and a geometric sum to find 
	\begin{align*}
		&\E \big( \| \bfE^\veps_{N+p} - \bfE^\veps_{N} \|^2_{L^2(D)} \big) + 
		\E \big( \| \bfE^\veps_{N+p} - \bfE^\veps_{N} \|^2_{L^2(\pa D)} \big)
		\\ 
		&\qquad \qquad\leq p \sum_{n = N}^{N+p-1} \veps^{2n}
		\left( \E \big( \| \bfE_n \|^2_{L^2(D)} \big) + \E \big( \| \bfE_n \|
		^2_{L^2(\pa D)} \big)  \right) \\
		& \qquad \qquad \leq p \left(\frac{1}{k} + \frac{1}{k^2} \right)^2 
		\mathcal{M}(\bff) \sum_{n = N}^{N+p-1} \veps^{2n} C(n,k) \\
		& \qquad \qquad \leq C_0 p \left(\frac{1}{k} + \frac{1}{k^2} 
		\right)^2 \mathcal{M}(\bff) \sum_{n = N}^{N+p-1} \sigma^{2n} \\
		& \qquad \qquad \leq C_0 p \left(\frac{1}{k} + \frac{1}{k^2} 
		\right)^2 \mathcal{M}(\bff) \cdot \frac{\sigma^{2N}(1-\sigma^{2p})}{1-\sigma^2}.
	\end{align*}
	Similarly, the following also holds:
	\begin{align*}
		&\E \big( \| \curl (\bfE^\veps_{N+p} - \bfE^\veps_{N}) \|^2_{L^2(D)} 
		\big) + \E \big( \|  
		\curl (\bfE^\veps_{N+p} - \bfE^\veps_{N}) \|^2_{L^2(\pa D)} \big) \\
		& \qquad \qquad \leq C_0 p \left(1 + \frac{1}{k} 
		\right)^2 \mathcal{M}(\bff) \cdot \frac{\sigma^{2N}(1-\sigma^{2p})}{1-\sigma^2}.
	\end{align*}
	Thus, for $\sigma < 1$,
	\begin{align*}
		\lim_{N \to \infty} &\left[ \E \big( \| \bfE^\veps_{N+p} - \bfE^
		\veps_{N} \|^2_{L^2(D)} \big) + \E \big( \| \bfE^\veps_{N+p} - \bfE^
		\veps_{N} \|^2_{L^2(\pa D)} \big) \right. \\
		& \quad \left. \E \big( \| \curl (\bfE^\veps_{N+p} - \bfE^\veps_{N}) 
		\|^2_{L^2(D)} \big) + \E \big( \| \curl (\bfE^\veps_{N+p} - \bfE^
		\veps_{N}) \|^2_{L^2(\pa D)} \big) \right] = 0.
	\end{align*}
	Therefore, $\{ \bfE^\veps_N \}$ is a Cauchy sequence and we conclude that 
	the series \eqref{Eq:MultiModes} converges in $\bL^2(\Om,\hat{\bcV})$.
	
	Let $\bfE^\veps \in \bL^2(\Om,\hat{\bcV})$ be defined by 
	\eqref{Eq:MultiModes}.  By the 
	definitions of the mode functions $\bfE_n$ and the finite-mode expansion $
	\bfE^\veps_N$ we have
	\begin{align} \label{Eq:MultiModesSolution_a}
		\int_{\Om} a(\bfE^\veps_N , \bfv) \, dP = \int_\Om \Big( \big(\bff, 
		\bfv \big)_D - k^2 \veps^N \big(\eta(2+\veps \eta) \bfE_{N-1} + \eta^2 
		\bfE_{N-2}, \bfv \big)_D \Big)\, dP,
	\end{align}
	for all $\bfv \in \bL^2(\Om,\bcV)$. By using the Schwarz inequality and 
	Theorem \ref{Thm:MultiModes}, it follows that
	\begin{align*}
		&k^2 \veps^N \left| \int_{\Om} \big(\eta(2+\veps \eta) \bfE_{N-1} + 
		\eta^2 \bfE_{N-2}, \bfv \big)_D \, dP \right| \\
		& \qquad \leq 3k^2 \veps^N \Big( \big(\E (\| \bfE_{N-1} \|^2_{L^2(D)}) 
		\big)^{\frac{1}{2}} + \big(\E (\| \bfE_{N-2} \|^2_{L^2(D)})
		\big)^{\frac{1}{2}} \Big) \big(\E (\| \bfv \|^2_{L^2(D)})
		\big)^{\frac{1}{2}}\\
		& \qquad \leq 6k^2 \veps^N \left( \frac{1}{k} + \frac{1}{k^2} \right) 
		C(N-1,k)^\frac{1}{2}\mathcal{M}(\bff)^\frac{1}{2}\big(\E (\| \bfv \|^2_{L^2(D)})
		\big)^{\frac{1}{2}} \\
		& \qquad \leq 3k^2 \veps C_0^\frac{1}{2} \left( \frac{1}{k} + \frac{1}
		{k^2} \right) \mathcal{M}(\bff)^\frac{1}{2}\big(\E (\| \bfv \|^2_{L^2(D)})
		\big)^{\frac{1}{2}} \sigma^{N-1} \\
		& \qquad \longrightarrow 0 \, \mbox{ as } N \to \infty, \, \mbox{ 
		provided that } \sigma < 1.
	\end{align*}
	Thus, by letting $N \to \infty$ in \eqref{Eq:MultiModesSolution_a}
	\begin{align} \label{Eq:MultiModesSolution_b}
		\int_{\Om} a(\bfE^\veps , \bfv) \, dP = \int_\Om \big(\bff, 
		\bfv \big)_D \, dP \qquad \forall \bfv \in \bL^2(\Om,\bcV).
	\end{align}
 Therefore, we conclude that the multi-modes expansion $\bfE^\veps$ defined in \eqref{Eq:MultiModes} satisfies \eqref{Eq:PDE1}--\eqref{Eq:PDE2} in the sense of Definition \ref{Def:WeakForm} (cf. Remark \ref{rem2.1}).
\end{proof}
\begin{remark}
	To ensure the convergence of the multi-modes expansion of the solution in 
	\eqref{Eq:MultiModes}, Theorem \ref{Thm:MultiModesSolution} requires that $\veps = O ((1+k)^{-1}(1 + 
	\mu)^{-1})$. Compared with the scalar Helmholtz problem in random media in \cite{Feng_Lin_Lorton_15} where one requires $\veps = O( (1+k)^{-1})$,
	the new constraint on $\veps$ also depends on $\mu$, the upper bound for the gradient of the random field.
\end{remark}

To achieve a computable approximation to $\bfE^\veps$, we will use the  
truncated finite modes representation $\bfE^\veps_N$ defined by \eqref{Eq:FiniteModes}.
The following theorem gives the error associated with the finite modes 
representation. This theorem is a direct consequence of Theorems \ref{Thm:PDEEstimate} and \ref{Thm:MultiModes}.
\begin{theorem} \label{Thm:FiniteModesError}
	Let $\bff \in \bL^2\big(\Om,\bH(\ddiv,D)\big)$ and $\bfE_n$ be the same as 
	those defined 
	in Theorem \ref{Thm:MultiModes}. Then for $\bfE^\veps$ and $\bfE^\veps_N$ 
	defined in \eqref{Eq:MultiModes} and \eqref{Eq:FiniteModes}, respectively,
	the following estimates hold:
	\begin{align} \label{Eq:FiniteModesError_1}
		\E \big( \| \bfE^\veps - \bfE^\veps_N \|^2_{L^2(D)} \big)
		& \leq \frac{72 C_0 \sigma^{2N}}{343(1+k)^2(1+\mu)^2} 
		\left(1 + \frac{1}{k} \right)^4 \mathcal{M}(\bff),  \\
		\E \Big( \big\| \curl \big( \bfE^\veps - \bfE^\veps_N \big) \big\|
		_{L^2(D)}^2  \Big) & \leq \frac{72 C_0 \sigma^{2N}}{343(1+k)^2(1+\mu)^2} 
		\left(k + 1 \right)^4 \mathcal{M}(\bff), \label{Eq:FiniteModesError_2}
	\end{align}
	provided $\sigma = 7 \veps C_0^{\frac{1}{2}} (1 + k)(1 + \mu) < 1$.  Where
	\begin{align*}
		\mathcal{M}(\bff) := \E \big( \| \bff \|^2_{L^2(D)} \big) + \E \big( \| 
		\ddiv \bff \|^2_{L^2(D)} \big).
	\end{align*}
\end{theorem}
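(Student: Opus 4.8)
\emph{Plan.} The idea is not to sum the series \eqref{Eq:MultiModes} directly, but to observe that the truncation error $\bfR_N := \bfE^\veps - \bfE^\veps_N$ is itself the weak solution of the random Maxwell system \eqref{Eq:PDE1}--\eqref{Eq:PDE2}, driven by an explicit source assembled from the two highest retained modes. Indeed, subtracting the identity \eqref{Eq:MultiModesSolution_a} satisfied by $\bfE^\veps_N$ from the weak form \eqref{Eq:WeakForm} satisfied by $\bfE^\veps$ shows that $\bfR_N$ solves $a(\bfR_N,\bfv) = (\bfG_N,\bfv)_D$ for all $\bfv \in \bcV$, where (suppressing the argument $(\om,\cdot)$)
\begin{align*}
\bfG_N := k^2\veps^N\big(\eta(2+\veps\eta)\,\bfE_{N-1} + \eta^2\,\bfE_{N-2}\big).
\end{align*}
This is precisely the mismatch already shown to vanish in the proof of Theorem \ref{Thm:MultiModesSolution}; here the goal is to estimate it quantitatively. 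Once this is in place, the theorem follows, as advertised, by feeding $\bfG_N$ into the wavenumber-explicit stability bounds of Theorem \ref{Thm:PDEEstimate} and then controlling $\bfG_N$ with the mode estimates of Theorem \ref{Thm:MultiModes}.

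Concretely, I would first apply \eqref{Eq:PDEEstimate1} and \eqref{Eq:PDEEstimate2} to $\bfR_N$ (with $\bff$ replaced by $\bfG_N$) to get
\begin{align*}
\E\|\bfR_N\|^2_{L^2(D)} \le C_0\Big(\tfrac1k+\tfrac1{k^2}\Big)^2 \mathcal{M}(\bfG_N), \qquad \E\|\curl\bfR_N\|^2_{L^2(D)} \le C_0\Big(1+\tfrac1k\Big)^2 \mathcal{M}(\bfG_N),
\end{align*}
where $\mathcal{M}(\bfG_N) = \E\|\bfG_N\|^2_{L^2(D)} + \E\|\ddiv\bfG_N\|^2_{L^2(D)}$. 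Everything then reduces to estimating $\mathcal{M}(\bfG_N)$. For the $\bL^2$ part I would use $|\eta|\le1$ and $\veps\le\tfrac12$ to bound the scalar factors $\eta(2+\veps\eta)$ and $\eta^2$ by absolute constants; for the divergence part I would expand $\ddiv(\eta(2+\veps\eta)\bfE_{N-1})$ and $\ddiv(\eta^2\bfE_{N-2})$ by the product rule, controlling the gradient-of-coefficient contributions by $|\nabla\eta|\le\mu$ and the rest by $\|\ddiv\bfE_{N-1}\|$ and $\|\ddiv\bfE_{N-2}\|$.

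Substituting the mode estimates \eqref{Eq:ModeEst1} and \eqref{Eq:ModeEst3} for $\bfE_{N-1}$ and $\bfE_{N-2}$, the $k^4$ prefactor of $\bfG_N$ cancels the factor $(1/k+1/k^2)^2 = (k+1)^2/k^4$ carried by those estimates, so the wavenumber dependence collapses to $(k+1)^2$. The decisive algebraic identity, immediate from the definitions of $C(N-1,k)$ and $\sigma$, is
\begin{align*}
\veps^{2N}C(N-1,k) = \frac{\sigma^{2N}}{343\,(1+k)^2(1+\mu)^2},
\end{align*}
which produces exactly the denominator $343(1+k)^2(1+\mu)^2$ and the power $\sigma^{2N}$ recorded in the statement; the analogous $C(N-2,k)$ contribution carries an additional factor $\veps^2/\sigma^2 = O\big((1+k)^{-2}(1+\mu)^{-2}\big)$ and is therefore subdominant. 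Collecting the numerical constants from the $\bL^2$ and divergence pieces yields the constant $\tfrac{72}{343}$ and the factor $(1+\mu)^{-2}$ in $\mathcal{M}(\bfG_N)$, and inserting this into the two displayed stability bounds gives \eqref{Eq:FiniteModesError_1} and \eqref{Eq:FiniteModesError_2} (the extra power of $k$ in the curl estimate being absorbed via $k\ge1$).

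I expect the main obstacle to be the divergence estimate for $\bfG_N$: keeping exact track of the product-rule terms, the $\nabla\eta$ bounds, and the interplay between the $\bL^2$ and $\ddiv$ mode estimates is what fixes the numerical constant, and it is easy to mis-scale a factor there. A secondary technical point is regularity — applying Theorem \ref{Thm:PDEEstimate} to $\bfR_N$ presupposes $\bfR_N \in \hat{\bcV}$ and $\bfG_N \in \bH(\ddiv,D)$ almost surely, which I would justify from the standing regularity assumptions on $\bfE^\veps$ and the mode functions $\bfE_n$ together with $\eta \in W_C^{1,\infty}(D)$, invoking the mollification argument from the proof of Theorem \ref{Thm:PDEEstimate} if the needed smoothness is not directly available.
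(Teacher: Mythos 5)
Your proposal is correct and follows essentially the same route as the paper: both identify $\bfE^\veps - \bfE^\veps_N$ as the weak solution of \eqref{Eq:PDE1}--\eqref{Eq:PDE2} with source $k^2\veps^N\big(\eta(2+\veps\eta)\bfE_{N-1}+\eta^2\bfE_{N-2}\big)$, feed this into the wavenumber-explicit stability bounds of Theorem \ref{Thm:PDEEstimate}, control the source via the mode estimates of Theorem \ref{Thm:MultiModes}, and close with the identity $\veps^{2N}C(N-1,k)=\sigma^{2N}/\big(343(1+k)^2(1+\mu)^2\big)$ together with $C(N-2,k)\le C(N-1,k)$. The paper likewise treats only the $L^2$ estimate in detail and dispatches the curl bound as ``follows similarly,'' so your outline matches its level of execution as well.
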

\begin{proof}
	By subtracting \eqref{Eq:MultiModesSolution_a} from 
	\eqref{Eq:MultiModesSolution_b} we obtain
	\begin{align*}
		\int_{\Om} a(\bfE^\veps - \bfE^\veps_N , \bfv) \, dP = \int_\Om  k^2 
		\veps^N \big(\eta(2+\veps \eta) \bfE_{N-1} + \eta^2 \bfE_{N-2}, \bfv 
		\big)_D \, dP.
	\end{align*}
	Thus $\bfE^\veps - \bfE^\veps_N$ satisfies 
	\eqref{Eq:PDE1}--\eqref{Eq:PDE2} with 
	$\bff = k^2 \veps^N \big(\eta(2+\veps \eta) \bfE_{N-1} + \eta^2 \bfE_{N-2} 
	\big)$. Applying Theorems \ref{Thm:PDEEstimate} and \ref{Thm:MultiModes} yields
	\begin{align*}
		&\E \big( \| \bfE^\veps - \bfE^\veps_N \|^2_{L^2(D)} \big) + 
		\E \big( \| \bfE^\veps - \bfE^\veps_N \|^2_{L^2(\pa D)} \big) \\
		& \qquad \leq 18 k^4 \veps^{2N} C_0 \left( \frac{1}{k} + \frac{1}{k^2} 
		\right)^2 \Big( \E \big( \| \bfE_{N-1} \|^2_{L^2(D)} \big) + \E \big( 
		\| \ddiv \bfE_{N-1} \|^2_{L^2(D)} \big) \\
		& \qquad \qquad \qquad + \E \big( \| \bfE_{N-2} \|^2_{L^2(D)} \big) + 
		\E \big( \| \ddiv \bfE_{N-2} \|^2_{L^2(D)} \big) \Big) \\
		& \qquad \leq 36  k^4 \veps^{2N} C_0 \left( \frac{1}{k} +
		 \frac{1}{k^2}\right)^4 \Big( C(N-1,k) + C(N-2,k) \Big) \mathcal{M}(\bff) \\
		& \qquad \leq 72  \veps^{2N} C_0 \left( 1 +
		 \frac{1}{k}\right)^4 C(N-1,k) \mathcal{M}(\bff) \\
		& \qquad \leq \frac{72C_0 \sigma^{2N}}{343(1+k)^2(1+\mu)^2} \left( 1 +
		 \frac{1}{k}\right)^4 \mathcal{M}(\bff).
	\end{align*}
	Thus, \eqref{Eq:FiniteModesError_1} holds.  The proof of \eqref{Eq:FiniteModesError_2} follows similarly.
\end{proof}

\section{Monte Carlo discontinuous Galerkin approximation of the mode $\bfE_n$} \label{sec:MCIP-DG}
We propose a Monte Carlo interior penalty discontinuous Galerkin (MCIP-DG) method for approximating 
$\E (\bfE_n$). The IP-DG method was developed in \cite{Feng_Wu_14}  for approximating the deterministic time-Harmonic Maxwell's equations with large wave number.
Compared with other discretization methods such as finite difference and finite  element methods, the IP-DG method is unconditionally stable.
We begin by introducing standard notations to describe the IP-DG method in the next subsection.
\subsection{DG notation}
First, we let $\mathcal{T}_h$ denote a family of quasi-uniform, shape-regular 
partitions of the spatial 
domain $D$ parameterized by $h$.  Typically, the domain $D$ is partitioned 
into tetrahedrons or parallelpipeds. Let $\mathcal{E}^I_h$ and $\mathcal{E}
^B_h$ denote the sets of interior faces and boundary faces of $\mathcal{T}_h$, 
respectively, defined as
\begin{align*}
	\mathcal{E}^I_h &:= \left\{\mbox{ Set of faces } \mathcal{F} \mbox{ of 
	elements } K \in \mathcal{T}_h \, \big| \mathcal{F} \subset D \right\}, \\
	\mathcal{E}^B_h &:= \left\{\mbox{ Set of faces } \mathcal{F} \mbox{ of 
	elements } K \in\mathcal{T}_h \, \big| \mathcal{F} \subset \pa D \right\}.
\end{align*}

Let $\bH^1(\mathcal{T}_h)$ denote the vector-valued broken Sobolev space over 
the partition $\mathcal{T}_h$ defined as
\begin{align*}
	\bH^1(\mathcal{T}_h) := \left\{ \bfv \in \bL^2(D) \, \big| \, \bfv \in 
	\bH^1(K) \mbox{ for all } K \in \mathcal{T}_h  \right\}.
\end{align*}
Let $\bfV_h$ denote the space of piecewise linear functions over the partition 
$\mathcal{T}_h$ defined as
\begin{align*}
	\bfV_h := \left\{ \bfv \in \bH^1(\mathcal{T}_h) \, \big| \, \bfv \in 
	\bfP_1(K) \mbox{ for all } K \in \mathcal{T}_h  \right\},
\end{align*}
where $\bfP_1(K)$ is the space of vector-valued piecewise linear functions on 
$K \in \mathcal{T}_h$. 
 
The finite dimensional function space $\bfV_h$ includes functions that are 
discontinuous at the interior edges of all elements of the partition $\mathcal{T}_h$.  
To deal with these discontinuities, it is standard practice to define jump and 
average operators at these element boundaries.  Let $[\bfv]$ and $\{ \bfv \}$ 
denote the jump and average operator of $\bfv$ on a face $\mathcal{F} \in 
\mathcal{E}_h := \mathcal{E}^I_h \cup \mathcal{E}^B_h$.  For any $\mathcal{F} 
\in \mathcal{E}^I_h$ there exists $K,K' \in \mathcal{T}_h$ such that $
\mathcal{F} = \pa K \cap \pa K'$.  Thus, for $\mathcal{F} \in \mathcal{E}^I_h$ 
define
\begin{align*}
	[\bfv]|_{\mathcal{F}} &:= \left\{
		\begin{array}{l r}
			\bfv_K - \bfv_{K'}, & \mbox{ if the global label of } K \mbox{ is 
			larger than } K' \\
			\bfv_{K'} - \bfv_{K}, & \mbox{ if the global label of } K' \mbox{ 
			is larger than } K
		\end{array}
	\right. , \\
	\{ \bfv \}|_{\mathcal{F}} &:= \frac{1}{2} \big( \bfv_K + \bfv_{K'} \big).
\end{align*}
For $\mathcal{F} \in \mathcal{E}^B_h$, we set $[\bfv]|_\mathcal{F} = \{\bfv\}|_
\mathcal{F} = \bfv|_\mathcal{F}$. For $\mathcal{F} \in \mathcal{E}^I_h$ such 
that $\mathcal{F} = \pa K \cap \pa K'$ we define $\bfnu_\mathcal{F}$ as the 
unit normal to the face $\mathcal{F}$ which points outward of $K$ where 
$K$ has larger global label than $K'$. For $\mathcal{F} \in \mathcal{E}^B_h$ we 
define $\bfnu_\mathcal{F} = \bfnu$ the unit normal vector to the face $
\mathcal{F}$ which points outward of the domain $D$.

\bigskip

\subsection{IP-DG method for the deterministic time-harmonic Maxwell problem}
In this section we introduce the IP-DG method of \cite{Feng_Wu_14} that will be 
used in our overall MCIP-DG procedure. We consider the following 
deterministic time-harmonic Maxwell's equations:
\begin{alignat}{2}
	\curl \curl \bfE - k^2 \bfE &= \bfF &&\qquad \mbox{in } D, 
	\label{Eq:DeterministicPDE1}\\
	 \curl \bfE \times \bfnu - \i k \lambda \bfE_T &= \mathbf{0} &&\qquad \mbox{on } 
	 \pa D.
	 \label{Eq:DeterministicPDE2}
\end{alignat}
The IP-DG approximation $\bfE^h \in \bfV_h$ of the solution $\bfE$ is defined by 
seeking $\bfE^h \in \bfV_h$ that satisfies
\begin{align} \label{Eq:IP-DG}
	a_h(\bfE^h,\bfv^h) = (\bfF, \bfv^h)_D \qquad \forall \bfv^h \in \bfV_h,
\end{align}
where
\begin{align*}
	a_h(\bfE^h, \bfv^h) &:= b_h(\bfE^h, \bfv^h) - k^2(\bfE^h,\bfv^h)_D - \i 
	k \lambda \langle \bfE^h_T, \bfv^h_T \rangle_{\pa D}, \\
	b_h(\bfE^h, \bfv^h) &:= \sum_{K \in \mathcal{T}_h} ( \curl \bfE^h, \curl 
	\bfv^h)_K \\
	& \qquad - \sum_{\mathcal{F} \in \mathcal{E}^I_h} \Big( \big \langle \{ 
	\curl \bfE^h \times \bfnu_\mathcal{F} \}, [\bfv^h_T] \big 
	\rangle_{\mathcal{F}} + \big \langle [\bfE^h_T], \{ \curl \bfv^h \times 
	\bfnu_\mathcal{F} \} \big \rangle_{\mathcal{F}} \Big) \\
	& \qquad - \i \Big( J_0(\bfE^h, \bfv^h) + J_1(\bfE^h,\bfv^h) \Big), \\
	J_0(\bfE^h, \bfv^h) &:= \sum_{\mathcal{F} \in \mathcal{E}^I_h} 
	\frac{\gamma_{0}}{h} \big \langle [\bfE^h_T], 
	[\bfv^h_T] \big \rangle_\mathcal{F}, \\
	J_1(\bfE^h, \bfv^h) &:= \sum_{\mathcal{F} \in \mathcal{E}^I_h} \gamma_{1}h \big \langle [\curl \bfE^h \times \bfnu_
	\mathcal{F}], [\curl \bfv^h \times \bfnu_\mathcal{F}] \big \rangle_
	\mathcal{F}.
\end{align*}
Here $\gamma_{0}$ and $\gamma_{1}$ denote nonnegative 
penalty parameters that will be specified later. To aid in the analysis of this 
IP-DG method we define the following norms and semi-norms on $\bfV_h$:
\begin{align*}
	\| \bfv \|^2_{L^2(\mathcal{T}_h)} &:= \sum_{K \in \mathcal{T}_h} \| \bfv 
	\|^2_{L^2(K)}, \\
	| \bfv |^2_{DG} &:= \| \curl \bfv \|^2_{L^2(\mathcal{T}_h)} + J_0(\bfv,
	\bfv) + J_1(\bfv,\bfv), \\
	\| \bfv \|^2_{DG} &:= | \bfv |^2_{DG} + \| \bfv \|^2_{L^2(D)}.
\end{align*}

The next theorem comes from \cite{Feng_Wu_14} and demonstrates the 
unconditional stability of the above IP-DG method.  This theorem is split 
into two parts based on the size of the spatial partition parameter $h$ that is chosen.  
The first set of estimates are valid for any size of $h > 0$ while the second 
set of estimates are valid when $k^3h^2 = O(1)$ (considered the asymptotic mesh regime) 
and are sharper estimates.

\begin{theorem} \label{Thm:IP-DGStability}
	Let $\bfE^h \in \bfV_h$ be a solution to \eqref{Eq:IP-DG}.  
\begin{enumerate}[{\rm i)}]
\item For any $k,\lambda,h,\gamma_{0},\gamma_{1} > 0$ the following 
estimates hold:
\begin{align*}
	\| \curl \bfE^h \|_{L^2(\mathcal{T}_h)} + k \| \bfE^h \|_{L^2(D)} &\leq C 
	k^{-1} \csta \| \bfF \|_{L^2(D)}, \\
	\left( J_0(\bfE^h,\bfE^h) + J_1(\bfE^h,\bfE^h) + k \lambda \| \bfE^h_T \|
	^2_{L^2(\pa D)} \right)^{\frac{1}{2}} &\leq C k^{-1} \csta^{\frac{1}{2}} \| 
	\bfF \|_{L^2(D)},
\end{align*}
where $C$ is a constant independent of $h$, $\gamma_0$, $\gamma_1$, $
\lambda$, and $k$, and 
\begin{align*}
	\csta := \frac{1}{k \lambda h} + \frac{1}{\gamma_1 k^2 h^2} 
	+ \frac{1}{\gamma_0}+1.
\end{align*}
\item For $h$ in the asymptotic regime, $k^3h^2 = O(1)$, the following 
estimate holds:
\begin{align*}
	\| \bfE^h \|_{L^2(D)} + \| \bfE^h \|_{L^2(\pa D)} + \frac{1}{k}| \bfE^h |
	_{DG} \leq \widehat{C}_0 \left( \frac{1}{k} + \frac{1}{k^2} \right) 
	\| \bfF \|_{L^2(D)}.
\end{align*}
\end{enumerate}
\end{theorem}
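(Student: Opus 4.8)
The plan is to follow the energy method of \cite{Feng_Wu_14}: combine the self-test identity (which controls the dissipative quantities) with a discrete Rellich/Morawetz identity (which supplies the wavenumber-explicit $L^2$ control), exactly paralleling the continuous argument behind Theorem \ref{Thm:PDEEstimate}. Since \eqref{Eq:IP-DG} is a square finite-dimensional system, it suffices to prove the stated a priori bounds; taking $\bfF=\mathbf{0}$ in them yields $\bfE^h=\mathbf{0}$, hence invertibility and existence. First I would set $\bfv^h=\bfE^h$ in \eqref{Eq:IP-DG}. The element curl terms and the two off-diagonal consistency face terms in $b_h$ are Hermitian, whereas $J_0,J_1$ and the impedance term feed the imaginary part; separating parts gives
\[
J_0(\bfE^h,\bfE^h)+J_1(\bfE^h,\bfE^h)+k\lambda\|\bfE^h_T\|^2_{L^2(\pa D)}=-\im(\bfF,\bfE^h)_D\le \|\bfF\|_{L^2(D)}\,\|\bfE^h\|_{L^2(D)}.
\]
This already controls the jump seminorms and the boundary trace, and is the second asserted estimate of part (i) once $\|\bfE^h\|_{L^2(D)}$ is bounded.

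Next I would bound the consistency terms $\langle\{\curl\bfE^h\times\bfnu_{\mathcal{F}}\},[\bfE^h_T]\rangle_{\mathcal{F}}$ and their transposes. Because $\bfE^h$ is piecewise linear, $\curl\bfE^h$ is piecewise constant, so the discrete trace inequality $\|\curl\bfE^h\|^2_{L^2(\mathcal{F})}\le Ch^{-1}\|\curl\bfE^h\|^2_{L^2(K)}$ applies face-by-face; pairing it with the definitions of $J_0$ and $J_1$ and using Cauchy--Schwarz over the faces yields
\[
\Big|\sum_{\mathcal{F}\in\mathcal{E}^I_h}\big\langle\{\curl\bfE^h\times\bfnu_{\mathcal{F}}\},[\bfE^h_T]\big\rangle_{\mathcal{F}}\Big|\le C\gamma_0^{-1/2}\,\|\curl\bfE^h\|_{L^2(\mathcal{T}_h)}\,J_0(\bfE^h,\bfE^h)^{1/2},
\]
and similarly with $\gamma_1,J_1$. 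Taking the real part of the self-test identity then gives $\|\curl\bfE^h\|^2_{L^2(\mathcal{T}_h)}-k^2\|\bfE^h\|^2_{L^2(D)}=\re(\bfF,\bfE^h)_D+2\re\sum_{\mathcal{F}}\langle\{\curl\bfE^h\times\bfnu_{\mathcal{F}}\},[\bfE^h_T]\rangle_{\mathcal{F}}$, and the jump bound from the first step together with Young's inequality lets me absorb the consistency term and close a bound for $\|\curl\bfE^h\|_{L^2(\mathcal{T}_h)}$ in terms of $\|\bfF\|_{L^2(D)}$ and $\|\bfE^h\|_{L^2(D)}$.

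The decisive remaining step is to control $\|\bfE^h\|_{L^2(D)}$ itself, for which I would use a discrete analogue of the Rellich identities of Lemma \ref{Lem:RellichIDs}. I would test \eqref{Eq:IP-DG} with a suitable $\bfV_h$-interpolant of the Morawetz multiplier $\curl\bfE^h\times\bfx$, integrate by parts element-by-element, and collect the interior-face contributions created by the discontinuities; up to face and interpolation remainders this reproduces the coercive combination $k^2\|\bfE^h\|^2_{L^2(D)}+\|\curl\bfE^h\|^2_{L^2(\mathcal{T}_h)}$ exactly as in \eqref{Eq:PDEEstimate1a}. The \emph{hard part} is estimating those remainders: the multiplier $\curl\bfE^h\times\bfx$ does not lie in $\bfV_h$, so its interpolation error and the inter-element jumps must be reabsorbed via inverse and trace inequalities, and it is precisely the accumulated powers of $h$, $\gamma_0$, $\gamma_1$, $\lambda$ from these estimates — reabsorbed against the quantities already controlled — that produce the factor $\csta=\frac{1}{k\lambda h}+\frac{1}{\gamma_1 k^2 h^2}+\frac{1}{\gamma_0}+1$ in the first set of bounds.

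Finally, for part (ii) I would sharpen the same remainder analysis under the mesh constraint $k^3h^2=O(1)$. In this asymptotic regime the $h$-dependent error terms generated by the discrete Rellich identity scale favorably against the mass term $k^2\|\bfE^h\|^2_{L^2(D)}$ and can be absorbed outright into the left-hand side, eliminating the blow-up hidden in $\csta$; the surviving estimate then matches the continuous bound of Theorem \ref{Thm:PDEEstimate} and delivers $\|\bfE^h\|_{L^2(D)}+\|\bfE^h\|_{L^2(\pa D)}+k^{-1}|\bfE^h|_{DG}\le\widehat{C}_0(k^{-1}+k^{-2})\|\bfF\|_{L^2(D)}$. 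Throughout, the principal obstacle is this discrete Rellich step: unlike the continuous case the Morawetz multiplier leaves the discrete space, and the entire argument hinges on tracking the interpolation and jump remainders with explicit $h$- and $k$-dependence so that they may be absorbed in the two distinct regimes.
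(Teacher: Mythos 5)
The paper itself contains no proof of Theorem \ref{Thm:IP-DGStability}: it is imported verbatim from \cite{Feng_Wu_14} (``The next theorem comes from \cite{Feng_Wu_14}\dots''), so the only meaningful comparison is with the argument of that reference. Your outline does follow the same general strategy (test with $\bfE^h$, separate real and imaginary parts, then a Morawetz/Rellich multiplier step), and your first two steps are correct: the imaginary part of $a_h(\bfE^h,\bfE^h)=(\bfF,\bfE^h)_D$ gives exactly $J_0(\bfE^h,\bfE^h)+J_1(\bfE^h,\bfE^h)+k\lambda\|\bfE^h_T\|^2_{L^2(\pa D)}=-\im(\bfF,\bfE^h)_D$, and the consistency face terms are bounded as you state, using the trace/inverse inequality for the piecewise-constant field $\curl\bfE^h$ together with $J_0$.

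There are, however, two genuine problems. First, your account of the decisive step rests on a false premise: you claim that the multiplier $\curl\bfE^h\times\bfx$ ``does not lie in $\bfV_h$'' and that the proof hinges on interpolation remainders. Since $\bfE^h$ is piecewise linear, $\curl\bfE^h$ is piecewise constant on each element, so $\curl\bfE^h\times\bfx$ is piecewise linear and therefore belongs \emph{exactly} to the fully discontinuous space $\bfV_h$; this admissibility of the Morawetz multiplier with no interpolation whatsoever is precisely the structural reason the piecewise-linear IP-DG method admits an ``absolutely stable'' analysis, and it is the design point of \cite{Feng_Wu_14}. There are no interpolation remainders at all; the entire difficulty lies in the inter-element face terms produced by element-wise integration by parts (because $\bfE^h$ and the multiplier jump across faces), which must be absorbed by $J_0$, $J_1$, the impedance term, and the star-shapedness condition \eqref{Eq:StarShape} --- that bookkeeping is what generates $\csta$. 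Second, and more seriously, this decisive step is only asserted, not executed: you state that ``accumulated powers of $h$, $\gamma_0$, $\gamma_1$, $\lambda$ \dots produce the factor $\csta$'' without deriving the discrete Rellich identity, listing the face terms, or showing how each one is absorbed, and essentially all of the content of the theorem resides there. The same vagueness affects part (ii): ``sharpening the remainder analysis'' is not an argument, and note in addition that part (ii) bounds the full trace $\|\bfE^h\|_{L^2(\pa D)}$, whereas your step-one identity only reaches the tangential trace $\|\bfE^h_T\|_{L^2(\pa D)}$; the normal component on $\pa D$ again requires the Rellich machinery. As it stands, the proposal is a plausible road map consistent with \cite{Feng_Wu_14}, but the core of the proof is missing, and the announced ``hard part'' misidentifies where the real difficulty lies.
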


Unique solvability to the IP-DG equation \eqref{Eq:IP-DG} is an immediate 
consequence of the above unconditional stability result.
\begin{corollary}
	There exists a unique solution to \eqref{Eq:IP-DG} for any fixed set of 
	parameters $k,\lambda,h,\gamma_{0},\gamma_{1} > 0$.
\end{corollary}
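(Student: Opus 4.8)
The plan is to exploit the fact that \eqref{Eq:IP-DG} is a \emph{finite-dimensional} square linear problem, so that existence and uniqueness are equivalent and it suffices to establish uniqueness. Concretely, fixing a basis of $\bfV_h$ turns the sesquilinear form $a_h(\cdot,\cdot)$ into a square matrix $A_h$, and solving \eqref{Eq:IP-DG} amounts to solving $A_h \mathbf{x} = \mathbf{b}$, where $\mathbf{b}$ encodes the linear functional $\bfv^h \mapsto (\bfF,\bfv^h)_D$. Since $\bfV_h$ is finite-dimensional, the rank--nullity theorem guarantees that $A_h$ is invertible (hence the system is uniquely solvable for every right-hand side) as soon as its kernel is trivial.

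First I would reduce to the homogeneous problem: suppose $\bfE^h \in \bfV_h$ satisfies $a_h(\bfE^h,\bfv^h) = 0$ for all $\bfv^h \in \bfV_h$, i.e.\ \eqref{Eq:IP-DG} with $\bfF = \mathbf{0}$. Then I would invoke the first stability estimate of Theorem \ref{Thm:IP-DGStability}, which holds for \emph{any} $k,\lambda,h,\gamma_0,\gamma_1 > 0$ and reads
\begin{align*}
	\| \curl \bfE^h \|_{L^2(\mathcal{T}_h)} + k \| \bfE^h \|_{L^2(D)} \leq C k^{-1} \csta \| \bfF \|_{L^2(D)}.
\end{align*}
Applying it with $\bfF = \mathbf{0}$ forces the left-hand side, a sum of nonnegative quantities, to vanish; in particular $k \| \bfE^h \|_{L^2(D)} = 0$, and since $k > 0$ this yields $\bfE^h = \mathbf{0}$. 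Thus the kernel of $A_h$ is trivial.

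Having shown injectivity, I would conclude by the finite-dimensionality of $\bfV_h$ that $A_h$ is bijective, so \eqref{Eq:IP-DG} admits a unique solution $\bfE^h \in \bfV_h$ for every datum $\bfF$, and for every fixed admissible choice of the parameters $k,\lambda,h,\gamma_0,\gamma_1 > 0$. I do not expect any genuine obstacle here: the entire content is the a priori bound of Theorem \ref{Thm:IP-DGStability}, and the only point worth checking is the bookkeeping that the discrete problem is truly a square system on $\bfV_h$ so that the equivalence of injectivity and surjectivity can be used. The argument is precisely the standard ``stability implies well-posedness'' deduction, which is why the statement is phrased as a corollary rather than a theorem.
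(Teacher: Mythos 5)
Your proposal is correct and matches the paper's intent exactly: the paper asserts unique solvability as an ``immediate consequence'' of the unconditional stability estimate in Theorem \ref{Thm:IP-DGStability}, and your argument simply spells out the standard deduction (stability with $\bfF = \mathbf{0}$ gives a trivial kernel, and finite-dimensionality of $\bfV_h$ upgrades injectivity to bijectivity). Nothing is missing, and no different route is taken.
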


The following theorem gives error estimates for this IP-DG approximation.
\begin{theorem} \label{Thm:IPDGError}
	Let $\bfE \in \bH^2(D)$ solve 
	\eqref{Eq:DeterministicPDE1}--\eqref{Eq:DeterministicPDE2}
	 and $\bfE^h \in \bfV_h$ solve \eqref{Eq:IP-DG}.
	 \begin{enumerate}[i.)]
	 	\item For any $k,\lambda,h,\gamma_{0},\gamma_{1} > 0$ the 
	 	following estimates hold:
	 	\begin{align*}
	 		\| \bfE - \bfE^h \|_{DG} &\leq C \left( h + \hatcsta(1+\gamma_1)
	 		\left(k^2 h^2 + k \lambda h^{\frac{3}{2}}\right) \right) 
	 		\mathcal{R}
	 		(\bfE), \\
	 		\| \bfE - \bfE^h \|_{L^2(D)} &\leq C \left( h^2 + \hatcsta k^{-1} 
	 		\left( k^2 h^2 + k \lambda h^{\frac{3}{2}} \right) \right) ( 1 + 
	 		\gamma_1) \mathcal{R}(\bfE),
	 	\end{align*}
	 	where
	 	\begin{align*}
	 		\mathcal{R}(\bfE) &:= (1 + \gamma_1)^{\frac{1}{2}} \left( \| \bfE 
	 		\|_{H^1(D)}^2 + \| \curl \bfE \|^2_{H^1(D)} \right)^{\frac{1}{2}} 
	 		+ \| \bfE \|_{H^2(D)}, \\
	 		\hatcsta &:= \max \left\{ k^{-1}(1+\csta), \big( k^{-1}
	 		\lambda^{-1} (1+\csta) \big)^{\frac{1}{2}} \right\}.
	 	\end{align*}
	 	\item In the case that $k^3 h^2 = O(1)$, the following estimates hold:
	 	\begin{align*}
	 		\| \bfE - \bfE^h \|_{DG} &\leq h C_1 \mathcal{R}(\bfE), \\
	 		\| \bfE - \bfE^h \|_{L^2(D)} &\leq h^2 C_2\mathcal{R}(\bfE).
	 	\end{align*}
	 \end{enumerate}
\end{theorem}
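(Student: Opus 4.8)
The plan is to carry out the now-standard Schatz-type error analysis for an indefinite, G\"{a}rding-coercive discontinuous Galerkin discretization, combining consistency, the approximation properties of $\bfV_h$, and the unconditional stability of Theorem \ref{Thm:IP-DGStability} fed through a duality argument to tame the indefinite $-k^2$ term. First I would establish consistency (Galerkin orthogonality): since $\bfE \in \bH^2(D)$, both $\bfE$ and $\curl\bfE$ have single-valued traces, so every interelement jump term in $b_h$ vanishes when the exact solution is inserted, and elementwise integration by parts then gives $a_h(\bfE,\bfv^h) = (\bfF,\bfv^h)_D$ for all $\bfv^h \in \bfV_h$. Subtracting the scheme \eqref{Eq:IP-DG} yields the error equation $a_h(\bfE - \bfE^h,\bfv^h) = 0$ for every $\bfv^h \in \bfV_h$.

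Next I would split $\bfE - \bfE^h = \eta + \xi$, with $\eta := \bfE - \Pi_h\bfE$ the interpolation error of a standard piecewise-linear interpolant and $\xi := \Pi_h\bfE - \bfE^h \in \bfV_h$ the discrete error; by the triangle inequality it suffices to bound each piece. Standard approximation theory controls $\eta$: $\|\eta\|_{L^2(D)} \lesssim h^2$, $|\eta|_{DG} \lesssim h$, and the tangential trace terms $\lesssim h^{3/2}$, each against the Sobolev norms gathered in $\mathcal{R}(\bfE)$ (the $J_1$ weighting is what produces the $(1+\gamma_1)^{1/2}$ factor). For $\xi$, the real part of $a_h$ satisfies a G\"{a}rding-type inequality of the form $|\xi|_{DG}^2 \lesssim \re\, a_h(\xi,\xi) + k^2\|\xi\|_{L^2(D)}^2$; combined with the error equation $a_h(\xi,\bfv^h) = -a_h(\eta,\bfv^h)$ and Cauchy--Schwarz, this yields a bound on $|\xi|_{DG}$ in terms of the energy interpolation error (the bare $O(h)$ contribution that is not multiplied by $\hatcsta$) plus a residual multiple of $k\|\xi\|_{L^2(D)}$.

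The crux is controlling $k\|\xi\|_{L^2(D)}$, which I would handle by an Aubin--Nitsche duality argument: solve the adjoint problem with data $\xi$, exploiting $\bH^2$-regularity on the convex polygonal domain, and use its continuous-to-discrete stability, quantified exactly by the constant $\hatcsta$ of Theorem \ref{Thm:IP-DGStability}. Invoking Galerkin orthogonality once more and estimating the resulting consistency pairings by the interpolation bounds above turns this into the term $\hatcsta(1+\gamma_1)(k^2h^2 + k\lambda h^{3/2})\mathcal{R}(\bfE)$, where $k^2h^2$ comes from the mass pairing $-k^2(\eta,\cdot)_D$ and $k\lambda h^{3/2}$ from the impedance trace pairing $-\i k\lambda\langle\eta_T,\cdot\rangle_{\pa D}$. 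Assembling the $\eta$ and $\xi$ bounds gives the $DG$-norm estimate of part~(i); running the duality argument one step further (testing the adjoint solution against the full error) gains the extra power of $h$ and produces the $L^2$ estimate.

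For part~(ii) I would simply replace the general stability bound by the sharper asymptotic-regime estimate of Theorem \ref{Thm:IP-DGStability}(ii), valid when $k^3h^2 = O(1)$, which collapses all the $k$-, $\lambda$-, and $\gamma_i$-dependent prefactors into the clean constants $C_1, C_2$ and delivers the optimal $O(h)$ and $O(h^2)$ rates. The main obstacle I anticipate is the duality step: one must verify that the adjoint IP-DG problem inherits exactly the stability of Theorem \ref{Thm:IP-DGStability} with the stated $k$-explicit constant, and must track each consistency pairing with the correct powers of $h$, $k$, and $\lambda$ so that precisely the factor $\hatcsta(1+\gamma_1)(k^2h^2 + k\lambda h^{3/2})$ emerges rather than a cruder bound carrying the stability constant on the leading $O(h)$ term.
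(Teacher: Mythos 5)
The paper itself does not prove Theorem \ref{Thm:IPDGError}: immediately after the statement it remarks that part i) is proven in \cite{Feng_Wu_14} and part ii) in \cite{Monk_03}. Measured against the strategy of the cited reference, your plan diverges at the decisive step, and the divergence is fatal for part i). You propose a Schatz-type argument: a G\"{a}rding inequality for $a_h$, then an Aubin--Nitsche duality step to control $k\|\xi\|_{L^2(D)}$, then absorption. Any such absorption step requires a mesh-size condition (some power of $k$ times a power of $h$ small) so that the duality term can be kicked back to the left-hand side; but part i) asserts the estimates for \emph{any} $k,\lambda,h,\gamma_{0},\gamma_{1}>0$, with the price of a coarse mesh paid instead through the explicit, possibly large, factor $\hatcsta(1+\gamma_1)\left(k^2h^2+k\lambda h^{\frac{3}{2}}\right)$. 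The argument in \cite{Feng_Wu_14} avoids duality altogether: one introduces the elliptic projection $\widetilde{\bfE}^h$ (the IP-DG approximation of the coercive problem obtained by flipping the sign of the $k^2$ term), bounds $\bfE-\widetilde{\bfE}^h$ by standard coercive DG theory (this is the bare $O(h)$, resp.\ $O(h^2)$, contribution), and then observes that the discrete error $\xi:=\widetilde{\bfE}^h-\bfE^h\in\bfV_h$ solves the scheme \eqref{Eq:IP-DG} with right-hand side functional $\bfv^h\mapsto a_h(\widetilde{\bfE}^h-\bfE,\bfv^h)$, whose surviving terms are exactly the mass pairing (giving $k^2h^2$) and the impedance trace pairing (giving $k\lambda h^{\frac{3}{2}}$). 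Applying the \emph{unconditional discrete stability} of Theorem \ref{Thm:IP-DGStability} i) to $\xi$ --- not a duality argument --- is what produces $\hatcsta$; indeed $\hatcsta$ is built from the discrete stability constant $\csta$, not from any continuous adjoint-regularity constant, and your proposal conflates these two objects.

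Two further concrete problems. First, the G\"{a}rding inequality you invoke, $|\xi|_{DG}^2\lesssim \re a_h(\xi,\xi)+k^2\|\xi\|_{L^2(D)}^2$, is false for this particular scheme: the penalty forms $J_0,J_1$ enter $a_h$ multiplied by $-\i$, so they live entirely in $\im a_h$ and are invisible to the real part; one must combine real and imaginary parts (this imaginary penalty is precisely the design feature behind the unconditional stability). Second, your duality step presupposes $\bH^2$ regularity of the adjoint impedance Maxwell problem on a polyhedron, which is exactly the kind of assumption the paper flags as potentially false (see the remark citing \cite{Hiptmair_Moiola_Perugia_13}). Your outline is a plausible route to part ii), where the mesh condition $k^3h^2=O(1)$ is assumed and a duality-based argument in the spirit of \cite{Monk_03} can close; but for part i) the duality step cannot succeed, and the discrete-stability argument must replace it.
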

The proof for part i) can be found in \cite{Feng_Wu_14} and for part ii) in \cite{Monk_03}.  
It is well known that the solution $\bfE$ to the deterministic time-harmonic Maxwell's equations 
may not belong to $\bH^2(D)$ in some cases (c.f. \cite{Hiptmair_Moiola_Perugia_13}).  
On the other hand, if $\bfE \in \bH^2(D)$ then it can be shown that
\begin{align} \label{Eq:H2_Estimate}
	\mathcal{R} ( \bfE ) \leq C (1 + k) \| \bff \|_{L^2(D)}.
\end{align}
To keep estimates tractable for the rest of the paper,
we will assume that $k^3h^2 = O(1)$ and \eqref{Eq:H2_Estimate} holds.

We also note that $\E(\bfE_n)$ for $n \geq 1$ cannot be computed directly 
using the IP-DG method described in this section, due to the multiplicative 
structure of the right-hand side of \eqref{Eq:ModePDE3}. Thus, we will need one 
more layer of approximations to achieve a fully computable solution.  The next 
section will add a Monte Carlo method to the IP-DG method of this section to 
obtain an MCIP-DG method for approximating $\E(\bfE_n)$. However, 
one can approximate $\E(\bfE_0)$ directly by prescribing the   
function $\bfF = \E(\bff)$.  This is due to the linear 
nature of the expectation operator $\E(\cdot)$.  

\subsection{MCIP-DG method for approximating $\mathbf{\E(\bfE_n)}$ for $\mathbf{n \geq 0}$}
In this subsection we present our MCIP-DG method for approximating the expectation $\E(\bfE_n)$
of each mode function $\bfE_n$. 
Although it was noted earlier that the IP-DG method is valid in the pre-asymptotic mesh 
regime, we will only carry out the error analysis in the asymptotic mesh 
regime, $k^3h^2 = O(1)$, to avoid technicalities.  The estimates obtained in this subsection 
are similar to those of \cite{Feng_Lin_Lorton_15,Feng_Lorton_17} for the random Helmholtz 
problem and random elastic Helmholtz problem. 

Recall that each mode function $\bfE_n$ satisfies the following ``nearly 
deterministic" time-harmonic Maxwell's equations:
\begin{align*}
	& \curl \curl \bfE_n - k^2 \bfE_n = \bfS_n, \\
	& \curl \bfE_n \times \bfnu - \i k \lambda \bfE_n = \mathbf{0},
\end{align*}
where
\begin{align*}
	&\bfS_0 := \bff, \qquad \bfE_{-1} := \mathbf{0},\qquad \bfS_n := 2k^2 \eta \bfE_{n-1} + k^2\eta^2 
	\bfE_{n-2} \quad\forall n \geq 1.
\end{align*} 
Following the standard Monte Carlo procedure (c.f. 
\cite{Babuska_Tempone_Zouraris_04}) we let $M$ be a large integer indicating 
the number of samples to be taken to generate the Monte Carlo approximation.  
For each $j = 1, 2, \dots, M$ we obtain i.i.d. realizations of the source term 
$\bff(\om_j,\cdot) \in \bL^2(D)$ and 
the random coefficient $\eta(\om_j,\cdot) \in W_C^{1,\infty}(D)$.  With each 
realization of the data a sample mode function $\bfE_n^h(\om_j,\cdot) \in 
\bfV_h$ is found 
by solving the following IP-DG equation
\begin{align} \label{Eq:MCIP-DG}
	a_h(\bfE_n^h(\om_j,\cdot),\bfv^h) = (\bfS_n^h(\om_j,\cdot), \bfv^h)_D  
	\qquad \forall \bfv^h \in \bfV_h,
\end{align}
where
\begin{align*}
	\bfS^h_0(\om_j,\cdot) &:= \bff(\om_j,\cdot), \qquad \bfE^h_{-1} := \mathbf{0},\\
     \bfS^h_n(\om_j,\cdot) 
	&:= 2k^2 \eta \bfE_{n-1}^h(\om_j,\cdot) + k^2\eta^2 
	\bfE_{n-2}^h(\om_j,\cdot) \quad \forall n \geq 1.
\end{align*}
Due to the definition of $\bfS^h_n$, each realization $
\bfE^h_n(\om_j,\cdot)$ must be computed recursively. It is also important to 
note that for the sake of computability the source term $\bfS_n$
must be replaced with a discrete versions $\bfS^h_n$.  This adds 
another source of error that is dealt with in the overall error analysis.  The 
following theorem gives stability estimates for the discretized mode functions 
$\bfE^h_n$.  These stability estimates are a result of the stability estimates in
part ii) of Theorem \ref{Thm:IP-DGStability}. along with an induction argument 
like the one used to prove estimates in Theorem \ref{Thm:MultiModes}. 
\begin{lemma} \label{Lem:DiscreteModes}
	Let $\bff \in \bL^2\big(\Om,\bL^2(D))\big)$. Then for each $n \geq 0$, 
	there exists a 
	unique solution $\bfE^h_n \in \bL^2(\Om,\bfV^h)$ satisfying 
	\eqref{Eq:MCIP-DG}.  Moreover, for $h$ chosen such that $k^3h^2 = O(1)$, $
	\bfE^h_n$ satisfies the following stability estimates:
	\begin{align}
		\E \big( \| \bfE^h_n \|^2_{L^2(D)} \big) + \E \big( \| \bfE^h_n \|
		^2_{L^2(\pa D)} \big)
		\label{Eq:DiscreteModeEst1}
		& \leq \widehat{C}(n,k) \left(\frac{1}{k} + 
		\frac{1}{k^2} \right)^2 \E \big( \| \bff \|^2_{L^2(D)} \big), \\
		\label{Eq:DiscreteModeEst2}
		\E \big( \| \bfE^h_n \|^2_{DG} \big) & \leq \widehat{C}(n,k) \left(1 + 
		\frac{1}{k} \right)^2 \E \big( \| \bff \|^2_{L^2(D)} \big),
	\end{align}
	where
	\begin{align*}
		\widehat{C}(0,k) := \widehat{C}_0, \qquad \widehat{C}(n,k) := 4^{2n-1}
		\widehat{C}_0^{2n+2}(1+k)^{2n} \qquad\forall n \geq 1.
	\end{align*}
\end{lemma}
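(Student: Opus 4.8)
The plan is to follow the template established in the proof of Theorem~\ref{Thm:MultiModes}, replacing the continuous stability estimate of Theorem~\ref{Thm:PDEEstimate} with its discrete counterpart, part~ii) of Theorem~\ref{Thm:IP-DGStability}. First I would establish the base case $n=0$. Since $\bfE^h_0$ solves \eqref{Eq:MCIP-DG} with source $\bfS^h_0 = \bff$, the estimate in part~ii) of Theorem~\ref{Thm:IP-DGStability} gives directly
\begin{align*}
    \| \bfE^h_0(\om_j,\cdot) \|_{L^2(D)} + \| \bfE^h_0(\om_j,\cdot) \|_{L^2(\pa D)} + \frac{1}{k} | \bfE^h_0(\om_j,\cdot) |_{DG} \leq \widehat{C}_0 \left( \frac{1}{k} + \frac{1}{k^2} \right) \| \bff(\om_j,\cdot) \|_{L^2(D)}.
\end{align*}
Squaring, using $\| \bfE^h_0 \|_{DG}^2 = | \bfE^h_0 |_{DG}^2 + \| \bfE^h_0 \|_{L^2(D)}^2$, and taking expectation then yields \eqref{Eq:DiscreteModeEst1} and \eqref{Eq:DiscreteModeEst2} with the constant $\widehat{C}(0,k) = \widehat{C}_0$ (up to absorbing harmless numerical factors into $\widehat{C}_0$), since the samples are i.i.d.\ and $\E$ is linear.

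Next I would carry out the induction on $n$. Assuming \eqref{Eq:DiscreteModeEst1}--\eqref{Eq:DiscreteModeEst2} hold for all indices up to $\ell-1$, I would apply part~ii) of Theorem~\ref{Thm:IP-DGStability} to $\bfE^h_\ell$, whose source is $\bfS^h_\ell = 2k^2 \eta \bfE^h_{\ell-1} + k^2 \eta^2 \bfE^h_{\ell-2}$. The key estimate is a bound on $\| \bfS^h_\ell \|_{L^2(D)}$: using the pathwise bounds $\|\eta\|_{L^\infty(D)} \le 1$, the triangle inequality, and a Cauchy--Schwarz step to split the sum, I would obtain pointwise almost surely
\begin{align*}
    \| \bfS^h_\ell(\om_j,\cdot) \|_{L^2(D)}^2 \leq 2 k^4 \Big( 4 \| \bfE^h_{\ell-1}(\om_j,\cdot) \|_{L^2(D)}^2 + \| \bfE^h_{\ell-2}(\om_j,\cdot) \|_{L^2(D)}^2 \Big).
\end{align*}
Feeding this into the squared discrete stability estimate, taking expectation, and invoking the inductive hypothesis for indices $\ell-1$ and $\ell-2$ produces a bound involving $\widehat{C}(\ell-1,k)$ and $\widehat{C}(\ell-2,k)$. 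Using the ratio $\widehat{C}(\ell-2,k)/\widehat{C}(\ell-1,k) \leq 1$ (which follows from the definition of $\widehat{C}(n,k)$ as in the continuous case) collapses the two terms into a single multiple of $\widehat{C}(\ell-1,k)$, and a careful accounting of the constants—the factors $2$, $4$, the $k^4$ canceling against $(1/k+1/k^2)^2 = k^{-4}(1+k)^{-2}(1+k)^2$ suitably, and a copy of $\widehat{C}_0^2$ from re-applying the stability estimate—should reproduce exactly $\widehat{C}(\ell,k) = 4^{2\ell-1}\widehat{C}_0^{2\ell+2}(1+k)^{2\ell}$.

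The existence and uniqueness claim is the easy part: for each fixed sample $\om_j$ the linear system \eqref{Eq:MCIP-DG} is uniquely solvable by the Corollary following Theorem~\ref{Thm:IP-DGStability}, and measurability in $\om$ together with the just-proved $L^2(\Om)$ bounds places $\bfE^h_n$ in $\bL^2(\Om,\bfV^h)$. The main obstacle I anticipate is purely bookkeeping: tracking the constant $\widehat{C}(n,k)$ so that the recursion closes with the stated base $4^{2n-1}\widehat{C}_0^{2n+2}(1+k)^{2n}$ rather than merely some constant of the same order. In particular one must verify that each inductive step multiplies the previous constant by exactly $16 \widehat{C}_0^2 (1+k)^2$, which requires matching the leading numerical factor $2 \cdot 4 \cdot \widehat{C}_0^2 \cdot (\text{collapse factor})$ against $16$; getting this factor exactly right—rather than just $O(1)$—is the one delicate point, but it is entirely parallel to the analogous verification in Theorem~\ref{Thm:MultiModes} and involves no new ideas. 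A secondary subtlety is that the discrete divergence term does not appear, so unlike the continuous estimate \eqref{Eq:ModeEst0c} there is no $\ddiv$ contribution to the source bound; this actually simplifies matters, and is why only $\E(\|\bff\|^2_{L^2(D)})$ (not $\mathcal{M}(\bff)$) appears on the right-hand side.
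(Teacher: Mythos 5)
Your proposal is correct and follows exactly the route the paper itself prescribes: the paper gives no written proof of this lemma, stating only that it follows from part ii) of Theorem~\ref{Thm:IP-DGStability} together with an induction argument like the one in Theorem~\ref{Thm:MultiModes}, which is precisely what you carry out. Your constant-tracking concern is well placed but harmless—the paper's own constant $\widehat{C}(n,k)$ is stated with the same looseness (e.g.\ $\widehat{C}(0,k)=\widehat{C}_0$ rather than $\widehat{C}_0^2$ after squaring), and only the geometric growth rate in $n$ matters for the later convergence theorems.
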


Now we define our MCIP-DG approximation $\bfphi^h_n$ of $\E(\bfE_n)$ 
to be the following statistical average:
\begin{align} \label{Eq:MCIP-DGMode}
	\bfphi^h_n := \sum_{j = 1}^M \bfE^h_n(\om_j,\cdot).
\end{align}
To analyze the error between $\bfphi^h_n$ and $\E(\bfE_n)$ we note that the 
error can be decomposed as
\begin{align*}
	\E(\bfE_n) - \bfphi^h_n  = \Big( \E(\bfE_n) - \E(\bfE_n^h) \Big) + \Big( 
	\E(\bfE^h_n) -  \bfphi^h_n \Big),
\end{align*}
i.e. the error associated to approximating each mode function with its IP-DG 
approximation and the error associated to approximating the expectation with a 
statistical average generated from the Monte Carlo method.

To prove error estimates between the mode function $\bfE_n$ and the IP-DG approximation to the mode function $\bfE^h_n$ we will make use of the following lemma.
\begin{lemma}\label{Lem:Trivial_Inequality}
Let $\gamma,\beta > 0$ be two real numbers, $\{c_n\}_{n\geq 0}$ and
$\{\alpha_n\}_{n\geq 0}$ be two sequences of nonnegative numbers such that
\begin{equation}
c_0\leq \gamma\alpha_0, \quad
c_n\leq \beta c_{n-1} +\gamma \alpha_n \qquad\forall n\geq 1.
\end{equation}
Then there holds
\begin{equation}
c_n\leq \gamma \sum_{j=0}^n \beta^{n-j} \alpha_j \qquad\forall n\geq 1.
\end{equation}
\end{lemma}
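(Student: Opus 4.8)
The plan is to argue by induction on $n$, since the hypothesis is precisely a first-order linear recurrence inequality and the claimed bound is simply its explicit (telescoped) solution. The nonnegativity of the sequences and of $\beta$ is what makes the argument work, because it guarantees that multiplying the inductive bound by $\beta$ preserves the direction of the inequality.

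First I would dispatch the base case $n=1$. Combining the two hypotheses directly gives $c_1 \leq \beta c_0 + \gamma\alpha_1 \leq \gamma\beta\alpha_0 + \gamma\alpha_1 = \gamma\sum_{j=0}^{1}\beta^{1-j}\alpha_j$, which is exactly the asserted bound at $n=1$. For the inductive step I would assume the bound holds at level $n-1$, namely $c_{n-1} \leq \gamma\sum_{j=0}^{n-1}\beta^{n-1-j}\alpha_j$, and substitute it into the recursive hypothesis to obtain $c_n \leq \beta c_{n-1} + \gamma\alpha_n \leq \gamma\sum_{j=0}^{n-1}\beta^{n-j}\alpha_j + \gamma\alpha_n = \gamma\sum_{j=0}^{n}\beta^{n-j}\alpha_j$, where the final equality just absorbs the $j=n$ term (with $\beta^{0}=1$) into the sum. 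This closes the induction and proves the lemma for all $n\geq 1$.

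An equivalent route, which I might mention to make the mechanism transparent, is to unroll the recurrence directly rather than invoke formal induction: iterating $c_n \leq \beta c_{n-1} + \gamma\alpha_n$ downward yields $c_n \leq \beta^{n}c_0 + \gamma\sum_{j=1}^{n}\beta^{n-j}\alpha_j$, after which the initial bound $c_0 \leq \gamma\alpha_0$ converts $\beta^{n}c_0$ into $\gamma\beta^{n}\alpha_0$ and recovers the full sum starting at $j=0$. There is no genuine obstacle in this lemma; it is a routine computation. The only points that require a modicum of care are the nonnegativity assumptions, which ensure all the inequalities point the same way when scaled by $\beta$, and the bookkeeping of the exponents $\beta^{n-j}$, which the two approaches confirm agree.
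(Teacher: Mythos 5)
Your proof is correct. The paper itself gives no argument here---it states only that ``the proof is trivial so it is not given here''---and your induction (base case $n=1$ from the two hypotheses, inductive step by substituting the level-$(n-1)$ bound into $c_n \leq \beta c_{n-1} + \gamma\alpha_n$) is precisely the standard argument the authors are implicitly invoking, with the unrolling variant being an equivalent presentation of the same computation.
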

The proof is trivial so it is not given here.
We will also use a decomposition of the form
\begin{align}\label{Eq:IP-DG_Mode_Decomp_1}
	\bfE_n - \bfE^h_n = \big( \bfE_n - \widetilde{\bfE}^h_n \big) + \big(\widetilde{\bfE}^h_n - \bfE^h_n \big),
\end{align}
where $\widetilde{\bfE}^h_n(\om_j,\cdot) \in \bfV_h$ solves the following IP-DG equation
\begin{align} \label{Eq:IP-DG_Mode_Decomp_2}
	a_h(\bfE_n^h(\om_j,\cdot),\bfv^h) = (\bfS_n(\om_j,\cdot), \bfv^h)_D  
	\qquad \forall \bfv^h \in \bfV_h.
\end{align}
With these tools in hand we now give the following theorem which characterizes the error associated to approximating $\bfE_n$ with an IP-DG approximation $\bfE^h_n$.

\begin{theorem} \label{Thm:MultiModesIP-DGError}
	Suppose that $k^3h^2 = O(1)$ and $\bfE_n \in \bL^2(\Om,\bH^2(D))$, then 
	the following estimates hold
	\begin{align}
		\E \big( \| \bfE_n - \bfE^h_n \|_{L^2(D)} \big) &\leq \widetilde{C}_0 
		(1+k) h^2 \sum_{j = 0}^n \big[ \widehat{C}_0 (2k + 2) \big]^{n-j} \E 
		\big(\| \bfS_j \|_{L^2(D)} \big), \label{Eq:MultiModesIP-DGError1} \\
		\E \big( \| \bfE_n - \bfE^h_n \|_{DG} \big) & \leq C \widetilde{C}_0 (1+k)h 
		\sum_{j = 0}^n \big[ \widehat{C}_0 (2k + 2) \big]^{n-j} \E 
		\big(\| \bfS_j \|_{L^2(D)} \big), \label{Eq:MultiModesIP-DGError2}
	\end{align}
	where $C$, $\widetilde{C}_0$, and $\widehat{C}_0$ are constants 
	independent of $k$ and $h$.
\end{theorem}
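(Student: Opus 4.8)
The plan is to control the total error through the triangle inequality applied to the splitting \eqref{Eq:IP-DG_Mode_Decomp_1}, $\bfE_n - \bfE^h_n = (\bfE_n - \widetilde{\bfE}^h_n) + (\widetilde{\bfE}^h_n - \bfE^h_n)$, handling the two pieces by different mechanisms: the first by the deterministic IP-DG error estimate, the second by the IP-DG stability estimate applied to the source mismatch. Since $\widetilde{\bfE}^h_n$ solves \eqref{Eq:IP-DG_Mode_Decomp_2} with the \emph{exact} source $\bfS_n$ while $\bfE_n$ solves the continuous problem with the same source, part ii) of Theorem \ref{Thm:IPDGError} applies pathwise and gives $\|\bfE_n - \widetilde{\bfE}^h_n\|_{L^2(D)} \le C_2 h^2 \mathcal{R}(\bfE_n)$ and $\|\bfE_n - \widetilde{\bfE}^h_n\|_{DG} \le C_1 h\,\mathcal{R}(\bfE_n)$; the $\bH^2$ bound \eqref{Eq:H2_Estimate} (with source $\bfS_n$) then converts these into $\widetilde{C}_0(1+k)h^2\|\bfS_n\|_{L^2(D)}$ and $C\widetilde{C}_0(1+k)h\|\bfS_n\|_{L^2(D)}$. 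Taking expectations produces the forcing term $\gamma\alpha_n$ with $\gamma = \widetilde{C}_0(1+k)h^2$ and $\alpha_n := \E(\|\bfS_n\|_{L^2(D)})$.

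For the second piece I would observe that $\widetilde{\bfE}^h_n - \bfE^h_n \in \bfV_h$ solves the IP-DG equation with right-hand side $\bfS_n - \bfS^h_n$, so part ii) of Theorem \ref{Thm:IP-DGStability} yields both $\|\widetilde{\bfE}^h_n - \bfE^h_n\|_{L^2(D)} \le \widehat{C}_0(\tfrac1k+\tfrac1{k^2})\|\bfS_n - \bfS^h_n\|_{L^2(D)}$ and $|\widetilde{\bfE}^h_n - \bfE^h_n|_{DG} \le \widehat{C}_0(1+\tfrac1k)\|\bfS_n - \bfS^h_n\|_{L^2(D)}$. Writing $\bfS_n - \bfS^h_n = 2k^2\eta(\bfE_{n-1}-\bfE^h_{n-1}) + k^2\eta^2(\bfE_{n-2}-\bfE^h_{n-2})$ and using $\|\eta\|_{L^\infty(D)}\le 1$ bounds the mismatch by $2k^2\|\bfE_{n-1}-\bfE^h_{n-1}\|_{L^2(D)} + k^2\|\bfE_{n-2}-\bfE^h_{n-2}\|_{L^2(D)}$. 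Two consequences follow. First, setting $c_n := \E(\|\bfE_n - \bfE^h_n\|_{L^2(D)})$ and taking expectations, the $(\tfrac1k+\tfrac1{k^2})=\tfrac{k+1}{k^2}$ factor cancels the $k^2$ from the source to give the recursion $c_n \le \gamma\alpha_n + \widehat{C}_0(k+1)(2c_{n-1}+c_{n-2})$. Second, and worth exploiting as a simplification, the DG norm of the second piece is already controlled by the \emph{$L^2$} errors of the two preceding modes, so \eqref{Eq:MultiModesIP-DGError2} needs no recursion of its own and will follow from \eqref{Eq:MultiModesIP-DGError1} together with the first-piece $O(h)$ DG estimate.

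It remains to solve the recursion. The base case is clean: because $\bfS^h_0 = \bff = \bfS_0$ we have $\bfE^h_0 = \widetilde{\bfE}^h_0$, the second piece vanishes, and $c_0 \le \gamma\alpha_0$. To invoke Lemma \ref{Lem:Trivial_Inequality}, which is stated for a one-step recursion $c_n \le \beta c_{n-1}+\gamma\alpha_n$, I would reduce the two-step inequality to one step by absorbing the $\bfE_{n-2}$ contribution into the $\bfE_{n-1}$ term, exactly the device used in the induction of Theorem \ref{Thm:MultiModes}, where the lower-order term is dominated by the leading one; this produces the effective rate $\beta = 2\widehat{C}_0(k+1) = \widehat{C}_0(2k+2)$. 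Lemma \ref{Lem:Trivial_Inequality} then delivers $c_n \le \gamma\sum_{j=0}^n \beta^{n-j}\alpha_j$, which is \eqref{Eq:MultiModesIP-DGError1}; substituting this into the second-piece DG bound and adding the first-piece DG bound gives \eqref{Eq:MultiModesIP-DGError2} with the same summation structure.

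The step I expect to be the main obstacle is precisely the passage from the genuinely two-term recursion to the single-term form of Lemma \ref{Lem:Trivial_Inequality} at the advertised rate $\widehat{C}_0(2k+2)$: the $\bfE_{n-2}$ term carries a strictly positive weight $\widehat{C}_0(k+1)$, so its contribution must be dominated by that of $\bfE_{n-1}$ (as in the proof of Theorem \ref{Thm:MultiModes}) rather than simply dropped, and care is needed so that this domination does not inflate the geometric rate beyond $\widehat{C}_0(2k+2)$. Everything else is bookkeeping: keeping the $k$-powers consistent between the stability factor $(\tfrac1k+\tfrac1{k^2})$ and the $k^2$ arising from the source, and lumping the mesh-independent constants from Theorems \ref{Thm:IPDGError} and \ref{Thm:IP-DGStability} and from \eqref{Eq:H2_Estimate} into $C$, $\widetilde{C}_0$, and $\widehat{C}_0$.
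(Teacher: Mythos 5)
Your proof is correct and shares the paper's skeleton: the splitting \eqref{Eq:IP-DG_Mode_Decomp_1}, part ii) of Theorem \ref{Thm:IPDGError} together with \eqref{Eq:H2_Estimate} for the piece $\bfE_n - \bftE^h_n$, part ii) of Theorem \ref{Thm:IP-DGStability} together with $\|\eta\|_{L^\infty(D)}\le 1$ for the piece $\bftE^h_n - \bfE^h_n$, and Lemma \ref{Lem:Trivial_Inequality} to unwind the resulting recursion. The two places where you deviate are worth recording. First, the obstacle you flagged --- passing from the two-term recursion $c_n \le \gamma\alpha_n + \widehat{C}_0(k+1)(2c_{n-1}+c_{n-2})$ to the one-term form of Lemma \ref{Lem:Trivial_Inequality} --- is resolved in the paper not by dominating the $c_{n-2}$ contribution by the $c_{n-1}$ contribution (which is not available a priori: the errors $\E(\|\bfE_j - \bfE^h_j\|_{L^2(D)})$ need not be monotone in $j$, and in Theorem \ref{Thm:MultiModes} the domination is between the explicit bounds, $C(\ell-2,k)\le C(\ell-1,k)$, not between the quantities themselves), but by a pairing device: the paper sets $c_n := \E\big(\|\bfE_n - \bfE^h_n\|_{L^2(D)} + \|\bfE_{n-1} - \bfE^h_{n-1}\|_{L^2(D)}\big)$, so that $2\|\bfE_{n-1}-\bfE^h_{n-1}\|_{L^2(D)} + \|\bfE_{n-2}-\bfE^h_{n-2}\|_{L^2(D)} \le 2\big(\|\bfE_{n-1}-\bfE^h_{n-1}\|_{L^2(D)} + \|\bfE_{n-2}-\bfE^h_{n-2}\|_{L^2(D)}\big)$ turns the inequality into a genuine one-step recursion with $\beta = \widehat{C}_0(2k+2)$. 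Your domination strategy can nevertheless be completed by inducting on the claimed bound $\Gamma_j := \gamma\sum_{i=0}^{j}\beta^{j-i}\alpha_i$, which is monotone in $j$ whenever $\beta\ge 1$; this closes the induction at rate $\widehat{C}_0(k+1)(2+1/\beta)$ rather than exactly $\widehat{C}_0(2k+2)$, an inflation that is harmless since it can be absorbed into the constant (the paper's pairing incurs an analogous additive slack, as $c_n$ also contains the unrecursed term $\E(\|\bfE_{n-1}-\bfE^h_{n-1}\|_{L^2(D)}) \le c_{n-1}$). Second, for \eqref{Eq:MultiModesIP-DGError2} the paper applies an inverse inequality to $\bftE^h_n - \bfE^h_n \in \bfV_h$ and reuses the $L^2$ stability bound, gaining the factor $h^{-1}$ against the $O(h^2)$ bounds \eqref{Eq:MultiModesIP-DGError1} for the two preceding modes; you instead invoke the DG part of the stability estimate directly, which carries an extra factor of $k$, but after substituting \eqref{Eq:MultiModesIP-DGError1} this still yields $O(h)$ because $kh \lesssim 1$ under $k^3h^2 = O(1)$. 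Both variants deliver the stated estimates; yours avoids the inverse inequality at the cost of invoking the mesh condition once more.
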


\begin{proof}
	As an immediate consequence of Theorem \ref{Thm:IPDGError} and 
	\eqref{Eq:H2_Estimate} the following estimates on the error $\bfE_n - 
	\bftE_n^h$ hold
	\begin{align} \label{Eq:MultiModesIP-DGError3}
		\E \big( \| \bfE_n - \bftE_n^h \|_{DG} \big) &\leq C h (1 + k) 
		\E \big( \| \bfS_n \|_{L^2(D)} \big), \\
		\E \big( \| \bfE_n - \bftE_n^h \|_{L^2(D)} \big) &\leq C h^2 (1 + k) 
		\E \big( \| \bfS_n \|_{L^2(D)} \big). \label{Eq:MultiModesIP-DGError4}
	\end{align}
	To estimate the error $\bftE^h_n - \bfE^h_n$ we subtract 
	\eqref{Eq:MCIP-DG} from \eqref{Eq:IP-DG_Mode_Decomp_2} to obtain
	\begin{align*}
	 a_h\big(\bftE^h_n(\om_j,\cdot) - \bfE^h_n(\om_j,\cdot),\bfv^h \big) = 
	 \big(\bfS_n(\om_j,\cdot) - \bfS^h_n(\om_j,\cdot),\bfv^h \big)_{D} \qquad 
	 \forall \bfv^h \in \bfV_h.
	\end{align*}
	Thus, $\bfE_n(\om_j,\cdot) - \bftE_n^h(\om_j,\cdot)$ solves 
	\eqref{Eq:IP-DG} with $\bfF = \bfS_n(\om_j,\cdot) - \bfS^h_n(\om_j,\cdot)$
	and we can apply part ii) of Theorem \ref{Thm:IP-DGStability} to obtain
	\begin{align}\label{Eq:MultiModesIP-DGError5}
		&\E \big( k \| \bftE^h_n - \bfE^h_n \|_{L^2(D)} + k \| \bftE^h_n -
		 \bfE^h_n \|_{L^2(\pa D)} + | \bftE^h_n -
		  \bfE^h_n  |_{DG} \big) \\
		& \qquad \leq \widehat{C}_0 \left(1 + \frac{1}{k} \right) 
		\E \big( \|\bfS_n - \bfS^h_n \|_{L^2(D)} \big) \notag \\
		& \qquad \leq \widehat{C}_0 k(k+1) \E \big( 2 \| \bfE_{n-1} - 
		\bfE^h_{n-1} \|_{L^2(D)} + \| \bfE_{n-2} - \bfE^h_{n-2} \|_{L^2(D)} 
		\big). \notag
	\end{align}
	Combining \eqref{Eq:MultiModesIP-DGError4} and 
	\eqref{Eq:MultiModesIP-DGError5} yields
	\begin{align} \label{Eq:MultiModesIP-DGError6}
		&\E \big( \| \bfE_n - \bfE^h_n \|_{L^2(D)} + 
		\| \bfE_n - \bfE^h_n \|_{L^2(\pa D)} \big) \\
		& \qquad \leq \widehat{C}_0 (k+1) \E \big( 2\| \bfE_{n-1} - 
		\bfE^h_{n-1} \|_{L^2(D)} + \| \bfE_{n-2} - \bfE^h_{n-2} \|_{L^2(D)} 
		\big) \notag \\
		& \qquad \qquad + \widetilde{C}_0 (k+1) h^2 
		\E \big( \| \bfS_n \|_{L^2(D)} \big). \notag
	\end{align}
	We then apply an inverse inequality along with 
	\eqref{Eq:MultiModesIP-DGError5} and \eqref{Eq:MultiModesIP-DGError3} to 
	find
	\begin{align} \label{Eq:MultiModesIP-DGError7}
		&\E \big( \| \bfE_n - \bfE^h_n \|_{DG} \big) \\
		& \qquad \leq \E \big( \| \bftE^h_n - \bfE^h_n \|_{DG} + 
		 \| \bfE_n -\bftE^h_n \|_{DG} \big) \notag \\
		& \qquad \leq C h^{-1} \E \big( \| \bftE^h_n - \bfE^h_n \|_{L^2(D)}  
		\big) + \E \big( \| \bfE_n -\bftE^h_n \|_{DG} \big) \notag \\
		& \qquad \leq C \widehat{C}_0 h^{-1} (k+1) \E \big( 2\| \bfE_{n-1} - 
		\bfE^h_{n-1} \|_{L^2(D)} + \| \bfE_{n-2} - \bfE^h_{n-2} \|_{L^2(D)} 
		\big) \notag \\
		& \qquad \qquad + \widetilde{C}_0 (k+1) h 
		\E \big( \| \bfS_n \|_{L^2(D)} \big). \notag
	\end{align}
	
	We note that \eqref{Eq:MultiModesIP-DGError6} and
	 \eqref{Eq:MultiModesIP-DGError7} define recursive estimates for the error 
	 $\bfE_n - \bfE^h_n$. By definition
	\begin{align*}
		\bfE_{-2} = \bfE_{-1} = \bfE^h_{-2} = \bfE^h_{-1} = \mathbf{0},
	\end{align*}
	we get
	\begin{align*}
		\E \big( \| \bfE_0 - \bfE^h_0 \|_{L^2(D)} &\leq \widetilde{C}_0 (k+1) 
		h^2 \E \big( \| \bfS_0 \|_{L^2(D)} \big), \\
		\E \big( \| \bfE_0 - \bfE^h_0 \|_{DG} \big) &\leq \widetilde{C}_0 
		(k+1) h \E \big( \| \bfS_0 \|_{L^2(D)} \big).
	\end{align*}
	Define
	\begin{align*}
		c_n &: = \E \big( \| \bfE_{n} - \bfE^h_{n} \|_{L^2(D)} +
		 \| \bfE_{n-1} - \bfE^h_{n-1} \|_{L^2(D)}  \big), \\
		\beta &:= \widehat{C}_0 (2k + 2), \qquad \gamma :=  
		\widetilde{C}_0 (k+1)h^2, \qquad \alpha_n := 
		\E \big( \| \bfS_n \|_{L^2(D)} \big),
	\end{align*}
	we now apply Lemma \ref{Lem:Trivial_Inequality} with these choices to obtain 
	\eqref{Eq:MultiModesIP-DGError1}. Finally,  combining  
	\eqref{Eq:MultiModesIP-DGError7} and \eqref{Eq:MultiModesIP-DGError1} gives
	\eqref{Eq:MultiModesIP-DGError2}.
\end{proof}

To characterize the error associated with approximating the expected value by
the Monte Carlo method we make use of the 
following well known lemma (c.f. \cite{Babuska_Tempone_Zouraris_04,Liu_Riviere_13}).
\begin{lemma} \label{Lem:MultiModesMCError}
For $n \geq 0$ the following estimates hold
\begin{align}
	\E \big( \| \E(\bfE_n^h) - \bfphi^h_n \|^2_{L^2(D)} \big)& \leq 
	\frac{1}{M}\E \big( \| \bfE^h_n \|^2_{L^2(D)} \big), 
	\label{MultiModesMCError1} \\
	\E \big( \| \E(\bfE_n^h) - \bfphi^h_n \|^2_{DG} \big)& \leq 
	\frac{1}{M}\E \big( \| \bfE^h_n \|^2_{DG} \big).
	\label{MultiModesMCError2}
\end{align}
\end{lemma}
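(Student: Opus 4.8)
The plan is to recognize that both inequalities are the standard Hilbert-space variance bound for a Monte Carlo sample mean, and that the only probabilistic input needed is that the $M$ realizations $\bfE^h_n(\om_j,\cdot)$ are independent and identically distributed copies of $\bfE^h_n$. First I would rewrite the Monte Carlo error as a centered empirical average: setting $Z_j := \E(\bfE^h_n) - \bfE^h_n(\om_j,\cdot)$, the i.i.d.\ property gives $\E(Z_j)=\mathbf{0}$ and
\begin{align*}
	\E(\bfE^h_n) - \bfphi^h_n = \frac{1}{M}\sum_{j=1}^M Z_j.
\end{align*}

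Next I would expand the squared $L^2(D)$ norm through the sesquilinear inner product $(\cdot,\cdot)_D$ and take expectations term by term:
\begin{align*}
	\E\big(\|\E(\bfE^h_n)-\bfphi^h_n\|^2_{L^2(D)}\big)
	= \frac{1}{M^2}\sum_{i=1}^M\sum_{j=1}^M \E\big((Z_i,Z_j)_D\big).
\end{align*}
For $i\neq j$, independence of $Z_i$ and $Z_j$ together with Fubini's theorem and $\E(Z_j)=\mathbf{0}$ forces $\E((Z_i,Z_j)_D)=\int_D \E(Z_i)\cdot\overline{\E(Z_j)}\,d\bfx=0$, so only the $M$ diagonal terms remain; since the $Z_j$ are identically distributed this collapses to
\begin{align*}
	\E\big(\|\E(\bfE^h_n)-\bfphi^h_n\|^2_{L^2(D)}\big)
	= \frac{1}{M}\,\E\big(\|\E(\bfE^h_n)-\bfE^h_n\|^2_{L^2(D)}\big).
\end{align*}

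To finish I would apply the elementary bias--variance identity $\E(\|\bfE^h_n-\E(\bfE^h_n)\|^2_{L^2(D)})=\E(\|\bfE^h_n\|^2_{L^2(D)})-\|\E(\bfE^h_n)\|^2_{L^2(D)}\leq \E(\|\bfE^h_n\|^2_{L^2(D)})$ to discard the subtracted squared mean, which yields \eqref{MultiModesMCError1}. The estimate \eqref{MultiModesMCError2} follows by repeating the same three steps verbatim with the inner product underlying $\|\cdot\|_{DG}$ in place of $(\cdot,\cdot)_D$; this is legitimate because $\|\cdot\|^2_{DG}=\|\curl\cdot\|^2_{L^2(\mathcal{T}_h)}+J_0(\cdot,\cdot)+J_1(\cdot,\cdot)+\|\cdot\|^2_{L^2(D)}$ is induced by a genuine inner product, the forms $J_0$ and $J_1$ being symmetric and positive semidefinite. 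There is no real obstacle here---the entire content is the vanishing of the off-diagonal terms, which is exactly where the independence of the samples is used; everything else is linearity of $\E$ and the inner-product expansion.
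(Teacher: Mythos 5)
Your proof is correct, and it is the standard argument: the paper itself does not prove this lemma but cites it as well known (from Babu\v{s}ka--Tempone--Zouraris and Liu--Rivi\`ere), and the argument in those references is exactly yours --- center the estimator, expand the square, kill the off-diagonal terms by independence, and drop the squared mean via the bias--variance identity. Your observation that $\|\cdot\|_{DG}^2$ is induced by a genuine (Hermitian, positive definite) sesquilinear form, so the same three steps apply verbatim, is the only point requiring care for \eqref{MultiModesMCError2}, and you handle it properly.

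One remark: your proof implicitly uses $\bfphi^h_n = \frac{1}{M}\sum_{j=1}^M \bfE^h_n(\om_j,\cdot)$, whereas the paper's displayed definition \eqref{Eq:MCIP-DGMode} omits the factor $\frac{1}{M}$; that omission is a typo (Algorithm 2 does average by $\frac{1}{M}$), and without the averaging the lemma would be false, so your reading is the right one.
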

From Lemma \ref{Lem:DiscreteModes} and \ref{Lem:MultiModesMCError} we obtain the following theorem.
\begin{theorem} \label{Thm:MultiModesMCError}
	Suppose that $k^3h^2 = O(1)$, then the following estimates hold:
	\begin{align}
		\E \big( \| \E(\bfE_n^h) - \bfphi^h_n \|^2_{L^2(D)} \big)& \leq
		\frac{1}{M}\widehat{C}(n,k) \left(\frac{1}{k} + 
		\frac{1}{k^2} \right)^2 \E \big( \| \bff \|^2_{L^2(D)} \big), 
		\label{Eq:MultiModesMCError1} \\
		\E \big( \| \E(\bfE_n^h) - \bfphi^h_n \|^2_{DG} \big)& \leq
		\frac{1}{M}\widehat{C}(n,k) \left(1 + 
		\frac{1}{k} \right)^2 \E \big( \| \bff \|^2_{L^2(D)} \big). 
		\label{Eq:MultiModesMCError2}
	\end{align}
\end{theorem}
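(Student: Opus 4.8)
The plan is to obtain both estimates by simply chaining together the two lemmas that immediately precede the statement, namely the Monte Carlo variance bound of Lemma \ref{Lem:MultiModesMCError} and the stability bounds of Lemma \ref{Lem:DiscreteModes}; no new estimation is needed, since the real analytic work has already been done in establishing those two lemmas. First I would invoke \eqref{MultiModesMCError1}, which controls the sampling error of the statistical average $\bfphi^h_n$ against the exact expectation $\E(\bfE^h_n)$ of the discrete mode function by $\tfrac1M$ times its second moment in the $L^2(D)$ norm. Then I would feed in the stability estimate \eqref{Eq:DiscreteModeEst1} to replace that second moment by the explicit constant $\widehat{C}(n,k)$ times the wavenumber factor $(\tfrac1k+\tfrac1{k^2})^2$ and $\E(\|\bff\|^2_{L^2(D)})$. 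Concretely this gives
\begin{align*}
\E\big(\|\E(\bfE_n^h)-\bfphi^h_n\|^2_{L^2(D)}\big)
\le \frac{1}{M}\E\big(\|\bfE^h_n\|^2_{L^2(D)}\big)
\le \frac{1}{M}\widehat{C}(n,k)\Big(\tfrac1k+\tfrac1{k^2}\Big)^2\E\big(\|\bff\|^2_{L^2(D)}\big),
\end{align*}
which is exactly \eqref{Eq:MultiModesMCError1}. Here I would note that \eqref{Eq:DiscreteModeEst1} actually bounds the larger quantity $\E(\|\bfE^h_n\|^2_{L^2(D)})+\E(\|\bfE^h_n\|^2_{L^2(\pa D)})$, so discarding the nonnegative boundary term immediately yields the needed bound on $\E(\|\bfE^h_n\|^2_{L^2(D)})$ alone.

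For the second estimate the argument is identical in the $\|\cdot\|_{DG}$ norm: I would apply \eqref{MultiModesMCError2} followed by \eqref{Eq:DiscreteModeEst2}, giving
\begin{align*}
\E\big(\|\E(\bfE_n^h)-\bfphi^h_n\|^2_{DG}\big)
\le \frac{1}{M}\E\big(\|\bfE^h_n\|^2_{DG}\big)
\le \frac{1}{M}\widehat{C}(n,k)\Big(1+\tfrac1k\Big)^2\E\big(\|\bff\|^2_{L^2(D)}\big),
\end{align*}
which is \eqref{Eq:MultiModesMCError2}. The hypothesis $k^3h^2=O(1)$ enters only through Lemma \ref{Lem:DiscreteModes}, where the asymptotic mesh regime is needed so that part ii) of Theorem \ref{Thm:IP-DGStability} supplies the sharp discrete stability constants; it plays no further role in the present combination.

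There is therefore no genuine obstacle internal to this proof — it is a one-line composition of two prior results. The substantive difficulty lives upstream, in Lemma \ref{Lem:DiscreteModes}, whose stability constants $\widehat{C}(n,k)=4^{2n-1}\widehat{C}_0^{2n+2}(1+k)^{2n}$ are obtained by an induction on $n$ that mirrors the continuous argument of Theorem \ref{Thm:MultiModes}, tracking how the recursively defined source $\bfS^h_n=2k^2\eta\bfE^h_{n-1}+k^2\eta^2\bfE^h_{n-2}$ amplifies the bounds at each order, and in Lemma \ref{Lem:MultiModesMCError}, which rests on the standard i.i.d.\ Monte Carlo variance identity. Consequently the only point requiring care when writing out the present proof is bookkeeping of the wavenumber factors: ensuring that the $(\tfrac1k+\tfrac1{k^2})^2$ factor is carried through for the $L^2(D)$ estimate and the $(1+\tfrac1k)^2$ factor for the $DG$ estimate, matching precisely the form of the two stability bounds in Lemma \ref{Lem:DiscreteModes}.
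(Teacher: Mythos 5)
Your proposal is correct and is exactly the paper's argument: the paper obtains Theorem \ref{Thm:MultiModesMCError} precisely by combining the Monte Carlo variance bound of Lemma \ref{Lem:MultiModesMCError} with the discrete stability estimates of Lemma \ref{Lem:DiscreteModes} (discarding the nonnegative boundary term), which is what you do. Your observation that the hypothesis $k^3h^2 = O(1)$ enters only through Lemma \ref{Lem:DiscreteModes} is also accurate.
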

As expected, the error associated with approximating $\E(\bfE_n^h)$ using the 
Monte Carlo method is on the order $O(M^{\frac{1}{2}})$.  Thus, to ensure 
convergence $M$ must be taken to be sufficiently large.

\section{The overall numerical procedure} \label{sec:Numerical_Procedure}
In this section, we will present the overall numerical 
algorithm based on a combination of the multi-modes expansion 
of the solution given in \eqref{Eq:MultiModes} and the MCIP-DG method for computing each mode.
An acceleration strategy is proposed to obtain the mode functions so that the whole algorithm can be 
implemented in an efficient way. It should be pointed out that, instead of employing the Monte Carlo method for sampling, more efficient sampling techniques such as 
quasi-Monte Carlo methods or stochastic collocation methods can also be applied to compute the expectation, we omit the discussion here for conciseness.

\subsection{The numerical algorithm, linear solver, and computational complexity}
Before describing the multi-modes MCIP-DG method, we begin this section by 
giving the ``standard" MCIP-DG method for obtaining $\E(\bfE^\veps)$.  We use 
the term standard MCIP-DG method to describe a Monte Carlo interior penalty 
discontinuous Galerkin approximation which does not make use of the 
multi-modes expansion of the solution. The reason for introducing the standard 
MCIP-DG method is twofold.  First, we use this method as a test-stone for 
comparison with our multi-modes MCIP-DG method.  In particular, we will show 
that the multi-modes MCIP-DG method is far superior to the standard MCIP-DG 
method in terms of computational time needed for completion of the algorithm.  
Second, due to the difficulty in obtaining a closed-form solution for 
\eqref{Eq:PDE1} --\eqref{Eq:PDE2} we will compare the solution from the 
multi-modes MCIP-DG method to the solution obtained using the standard MCIP-DG 
method in all of our numerical tests later in the paper.  We do this because 
the standard method is known to converge to the true solution.

To describe the standard MCIP-DG method we start by defining the following 
IP-DG sesquilinear form:
\begin{align*}
	\hat{a}_{h,j}(\bfE^h, \bfv^h) &:= b_h(\bfE^h, \bfv^h) - k^2 
	\big(\alpha^2(\om_j,\cdot)\bfE^h,\bfv^h\big)_D - \i 
	k \lambda \langle \bfE^h_T, \bfv^h_T \rangle_{\pa D},
\end{align*}
where $\alpha(\om_j,\cdot)$ indicates a given realization of the random 
coefficient in \eqref{Eq:PDE1}.  Note that this sesquilinear form is similar 
in nature to that from \eqref{Eq:MCIP-DG} with the exception that $
\alpha(\om_j,\cdot)$ is now present in the second term of this sesquilinear 
form.  With this sesquilinear form we define the ``standard" MCIP-DG method.\\

\noindent {\bf Algorithm 1 (Standard MCIP-DG)}

\begin{description}
\item Input $\bff, \eta, \veps, k, h, M.$
\item Set $\bftPsi^\veps_h(\cdot)= \mathbf{0}$ (initializing).
\begin{description}
\item For $j=1,2,\cdots, M$
\item Obtain realizations $\bff(\om_j,\cdot)$ and $
\eta(\om_j,\cdot)$.
\item Solve for $\hat{\bfE}^h(\omega_j,\cdot) \in \bV^h$ such that
\[
\hat{a}_{h,j}\big( \hat{\bfE}^h(\omega_j,\cdot), \bfv_h \big) = 
\big(\bff(\omega_j,\cdot), \bfv_h\big)_D  \qquad\forall v_h\in \bV^h.
\]
\item Set $\bftPsi^\veps_h(\cdot) \leftarrow \bftPsi^\veps_h(\cdot) +\frac{1}
{M} \hat{\bfE}^h(\omega_j,\cdot)$.
\item Endfor
\end{description}
\item Output $\bftPsi^\veps_h(\cdot)$.
\end{description}
For the rest of the paper we will use the notation $\bftPsi^\veps_h(\cdot)$ to 
denote the ``standard" MCIP-DG approximation of $\E(\bfE^\veps)$.

We now are ready to state our multi-modes MCIP-DG algorithm. The key to obtaining an 
efficient algorithm 
is to leverage the fact that all the mode functions $\bfE_n$ satisfy a random 
PDE with the same left-hand side operators.  The random coefficients and 
source terms only show up on the right-hand side in a recursive fashion (see 
\eqref{Eq:ModePDE1}--\eqref{Eq:ModePDE4}).  This means that each IP-DG 
approximation of $\bfE_n$ will generate the same stiffness matrix $A$, 
regardless of the sample $\om_j$.
With this in mind, this matrix needs to be computed only once in the entire Monte Carlo approximation and thus, 
only one $LU$ decomposition needs to be performed for the entire algorithm.  
Then for each realization of coefficient and source data, the stored $LU$ 
decomposition along with forward and backward substitution can be used to 
compute the corresponding solution.

The algorithm based on the multi-modes MCIP-DG method is described 
below.\\

\noindent
{\bf Algorithm 2 (Multi-Modes MCIP-DG)}
\begin{description}
\itemsep0.1cm
\item Input $\bff, \eta, \veps, k, h, M,N$
\item Set $\bfPsi^\veps_{h,N}(\cdot)=0$ (initializing).
\item Generate the stiffness matrix $A$ from the sesquilinear form 
$a_h(\cdot,\cdot)$ on $\bV^h \times \bV^h$.
\item Compute and store the $LU$ decomposition of $A$.
\begin{description}
\itemsep0.1cm
\item For $j=1,2,\cdots, M$
\item Obtain realizations $\bff(\om_j,\cdot)$ and $
\eta(\om_j,\cdot)$.
\item Set $\bfS^h_0(\omega_j,\cdot)=\bff(\omega_j,\cdot)$.
\item Set $\bfE^h_{-1}(\omega_j,\cdot)= \mathbf{0}$.
\item Set $\bfE^\veps_{h,N}(\omega_j,\cdot)=\mathbf{0}$ (initializing).
\begin{description}
\itemsep0.1cm
\item For $n=0,1,\cdots, N-1$
\item Solve for $\bfE^h_n(\omega_j,\cdot) \in \bV^h$ such that
\[
a_h\bigl( \bfE^h_n(\omega_j,\cdot), \bfv_h \bigr) = \bigl(\bfS^h_n(\omega_j,
\cdot), \bfv_h\bigr)_D \qquad\forall \bfv_h\in \bV^h,
\]
using forward and backward substitution.
\item Set $\bfE^\veps_{h,N}(\omega_j,\cdot)\leftarrow \bfE^\veps_{h,N}
(\omega_j,\cdot) +\veps^n 
\bfE^h_n(\omega_j,\cdot)$.
\item Set $\bfS^h_{n+1}(\omega_j,\cdot)=2k^2 \eta(\omega_j,\cdot) 
\bfE^h_n(\omega_j,\cdot)
+ k^2 \eta(\omega_j,\cdot)^2 \bfE^h_{n-1}(\omega_j,\cdot)$.
\item Endfor
\end{description}
\item Set $\bfPsi^\veps_{h,N}(\cdot) \leftarrow \bfPsi^\veps_{h,N}(\cdot) +
\frac{1}{M} 
\bfE^\veps_{h,N}(\omega_j,\cdot)$.
\item Endfor
\end{description}
\item Output $\bfPsi^\veps_{h,N}(\cdot)$.
\end{description}
In the rest of the paper $\bfPsi^\veps_{h,N}$ is used to denote the multi-modes MCIP-DG 
approximation to $\E(\bfE^\veps)$ calculated by using Algorithm 2. Though $\bfphi^h_n$ as 
defined in \eqref{Eq:MCIP-DGMode} does not show up 
explicitly in Algorithm 2, we note that
\begin{align*}
	\bfPsi^\veps_{h,N} = \sum_{n=0}^{N-1} \veps^n \bfphi^h_n.
\end{align*}
This identity will be used in the convergence analysis given in the next 
section.

To demonstrate the efficiency of Algorithm 2, let $L = \frac{1}{h}$ where $h$ 
is the spatial mesh size used in the IP-DG method (see Section 
\ref{sec:MCIP-DG}), and we note for convergence of the IP-DG method we must 
choose $h$ to be small ensuring the parameter $L$ is large.  As stated in 
\cite{Feng_Lin_Lorton_15,Feng_Lin_Lorton_16,Feng_Lorton_17}, Algorithm 1 
requires $O(ML^{9})$ multiplications versus $O(L^{9} + MNL^{6})$ number of 
multiplications used in Algorithm 2.  In practice, the number of modes $N$ is
relatively small (see Theorem 
\ref{Thm:Convergence}), thus we can treat this parameter as a constant. To 
ensure the error associated with the IP-DG method, measured in the $L^2$ norm, is 
of the same order as the error due to the Monte Carlo simulation, we set 
$M = L^4$.  With this choice the number of multiplications used in 
Algorithm 1 is $O(L^{13})$ versus the $O(L^{9} + L^{10})$ used in 
Algorithm 2. Thus, big savings in the computational cost by using the 
multi-modes MCIP-DG method in Algorithm 2 over using the standard MCIP-DG 
method in Algorithm 1 is achieved.

Moreover, the proposed MCIP-DG method maintains the parallelism feature
of the standard Monte Carlo method because the outer 
loop of Algorithm 2 can be run simultaneously. Hence, it allows 
implementing the algorithm in parallel, thus resulting in additional speedup 
of the algorithm.

\subsection{Convergence analysis} \label{subsel:convergence}
In this section, we will give the convergence analysis for 
the proposed multi-modes MCIP-DG method.  To do so, we first decompose the total
error as follows: 
\begin{align} \label{Eq:Error_Decomposition}
	\E(\bfE^\veps)-\bfPsi^\veps_{h,N} 
	&=\bigl(\E(\bfE^\veps)-\E(\bfE^\veps_N)\bigr) + 
	\bigl( \E(\bfE^\veps_N) - \E(\bfE^\veps_{h,N})\bigr) \\
	&\qquad +\bigl( \E(\bfE^
	\veps_{h,N})-\bfPsi^\veps_{h,N} 
	\bigr). \notag
\end{align}
Here, the first term in the decomposition represents the error associated 
with the finite-modes representation of the solution, the second term is the error 
contributed by the IP-DG method, and the last term is the error arising in the sampling by
the Monte Carlo method.  We note that the error associated with 
approximating $\bfE^\veps$ by a finite-modes representation was already 
presented in Theorem \ref{Thm:FiniteModesError}.

To obtain the error due to the IP-DG method, we start with a lemma which 
is a direct result of Theorem \ref{Thm:MultiModesIP-DGError}.
\begin{lemma} \label{Lem:FinalIP-DGError}
Suppose that $\bfE_n \in \bL^2(\Om,\bH^2(D))$ for each $n \geq 0$ and $h$ is 
chosen to satisfy the following asymptotic mesh condition $k^3h^2 = O(1)$, then 
the following error estimates hold:
\begin{align*}
		\E \big( \| \bfE^\veps_{N} - \bfE^\veps_{h,N} \|_{L^2(D)} \big) &\leq 	
		\widetilde{C}_0(1+k) 
		h^2 \sum_{n = 0}^{N-1} \sum_{j = 0}^n \veps^n \big[ \widehat{C}_0 
		(2k + 2) \big]^{n-j} \E 
		\big( \| \bfS_j \|_{L^2(D)} \big), \\
		\E \big( \| \bfE^\veps_N - \bfE^\veps_{h,N} \|_{DG} \big) & \leq C 
		\widetilde{C}_0 (1+k) h \sum_{n = 0}^{N-1} \sum_{j = 0}^n \veps^n \big[ 
		\widehat{C}_0 (2k + 2) \big]^{n-j} \E \big( \| \bfS_j \|_{L^2(D)} 
		\big),
	\end{align*}
	where $C$, $\widetilde{C}_0$, and $\widehat{C}_0$ are constants 
	independent of $k$ and $h$.
\end{lemma}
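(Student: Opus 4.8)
The plan is to reduce everything to the per-mode error bound already established in Theorem \ref{Thm:MultiModesIP-DGError} and then simply sum over the modes. First I would write the difference between the finite-modes expansion and its MCIP-DG approximation as a weighted sum of the mode-wise errors,
\begin{align*}
	\bfE^\veps_N - \bfE^\veps_{h,N} = \sum_{n=0}^{N-1} \veps^n \big( \bfE_n - \bfE^h_n \big),
\end{align*}
which follows directly from the definition of the finite-modes expansion in \eqref{Eq:FiniteModes} together with the identity $\bfPsi^\veps_{h,N} = \sum_{n=0}^{N-1}\veps^n\bfphi^h_n$ (equivalently, sample-wise, $\bfE^\veps_{h,N}(\om_j,\cdot) = \sum_{n=0}^{N-1}\veps^n\bfE^h_n(\om_j,\cdot)$).

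Next I would apply the triangle inequality in the $L^2(D)$ norm (respectively the $\|\cdot\|_{DG}$ norm) and take expectation. Since each $\veps^n$ is a deterministic nonnegative scalar, linearity of $\E(\cdot)$ lets me pull it outside the expectation, giving
\begin{align*}
	\E\big( \| \bfE^\veps_N - \bfE^\veps_{h,N} \|_{L^2(D)} \big) \leq \sum_{n=0}^{N-1} \veps^n \, \E\big( \| \bfE_n - \bfE^h_n \|_{L^2(D)} \big),
\end{align*}
and the identical bound with $\|\cdot\|_{DG}$ replacing $\|\cdot\|_{L^2(D)}$.

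The final step is to insert the mode-wise estimates \eqref{Eq:MultiModesIP-DGError1}--\eqref{Eq:MultiModesIP-DGError2} into each summand. This replaces $\E(\|\bfE_n-\bfE^h_n\|_{L^2(D)})$ by $\widetilde{C}_0(1+k)h^2\sum_{j=0}^n[\widehat{C}_0(2k+2)]^{n-j}\E(\|\bfS_j\|_{L^2(D)})$ and the analogous DG bound (with $h$ in place of $h^2$ and the extra constant $C$), after which collecting the factors produces exactly the claimed double sums. Because the hypothesis $k^3h^2 = O(1)$ and the regularity assumption $\bfE_n\in\bL^2(\Om,\bH^2(D))$ are precisely those under which Theorem \ref{Thm:MultiModesIP-DGError} was established, no new work is required and the lemma is a direct corollary.

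As for obstacles, there is essentially no analytic difficulty here: all of the heavy lifting—the recursive IP-DG error propagation and the appeal to Lemma \ref{Lem:Trivial_Inequality}—is already contained in Theorem \ref{Thm:MultiModesIP-DGError}. The only point demanding care is bookkeeping, namely making sure the truncation index of $\bfE^\veps_N$ is aligned with the summation range $n=0,\dots,N-1$ on the right-hand side, and confirming that the constants $C$, $\widetilde{C}_0$, and $\widehat{C}_0$ remain independent of $k$ and $h$, which they do since they are inherited unchanged from Theorem \ref{Thm:MultiModesIP-DGError}.
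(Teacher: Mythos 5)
Your proposal is correct and follows exactly the route the paper intends: the paper presents Lemma \ref{Lem:FinalIP-DGError} as ``a direct result of Theorem \ref{Thm:MultiModesIP-DGError},'' and your write-up---decomposing $\bfE^\veps_N - \bfE^\veps_{h,N}$ as the weighted sum $\sum_{n=0}^{N-1}\veps^n(\bfE_n-\bfE^h_n)$, applying the triangle inequality and linearity of $\E(\cdot)$, and then inserting the mode-wise estimates \eqref{Eq:MultiModesIP-DGError1}--\eqref{Eq:MultiModesIP-DGError2}---is precisely the argument that remark compresses. Your bookkeeping caveat about aligning the truncation index of $\bfE^\veps_N$ with the summation range $n=0,\dots,N-1$ is also well taken, since the paper's definition \eqref{Eq:FiniteModes} sums to $N$ while Algorithm 2 sums to $N-1$.
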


To obtain the final estimate to characterize the IP-DG error we combine the 
previous lemma with the stability estimates in Theorem \ref{Thm:MultiModes}.
\begin{theorem} \label{Thm:FinalIP-DGError}
	Suppose that $\bfE_n \in \bL^2(\Om,\bH^2(D))$ for each $n \geq 0$ and $h$ 
	is chosen to satisfy the asymptotic mesh condition $k^3h^2 = 
	O(1)$.  Further assume that $\veps$ is chosen small enough to ensure $
	\widehat{\sigma} :=  14 \widehat{C}_0 \sqrt{C_0}(1 + k)(1+\mu)\veps < 1$. 
	Then the following estimates hold:
	\begin{align}
		\E \big( \| \bfE^\veps_N - \bfE^\veps_{h,N} \|_{L^2(D)} \big) &\leq
		C(C_0,\widehat{C}_0,\widetilde{C}_0,k,\veps)\,h^2\, \mathcal{M}
		(\bff)^{\frac{1}{2}}, 
		\label{Eq:FinalIP-DGError1}\\
		\E \big( \| \bfE^\veps_N - \bfE^\veps_{h,N} \|_{DG} \big) &\leq
		C(C_0,\widehat{C}_0,\widetilde{C}_0,k,\veps)\,h\,\mathcal{M}
		(\bff)^{\frac{1}{2}},
		\label{Eq:FinalIP-DGError2}
	\end{align}
	where
	\begin{align*}
		C(C_0,\widehat{C}_0,\widetilde{C}_0,k,\veps):=  
		\frac{C \widetilde{C}_0 \sqrt{C_0} (k + 1)}{7^{\frac{1}{2}}
		(7\sqrt{C_0}(1+\mu) - 1)} \cdot\frac{1}{1-\widehat{\sigma}}.
	\end{align*}
\end{theorem}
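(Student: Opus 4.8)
The plan is to start from Lemma~\ref{Lem:FinalIP-DGError}, whose right-hand sides already express the IP-DG error as the prefactor $\widetilde{C}_0(1+k)h^2$ (for the $L^2$ estimate) or $C\widetilde{C}_0(1+k)h$ (for the $DG$ estimate) times the double sum
\[
\sum_{n=0}^{N-1}\sum_{j=0}^{n}\veps^{n}\bigl[\widehat{C}_0(2k+2)\bigr]^{n-j}\,\E\bigl(\|\bfS_j\|_{L^2(D)}\bigr).
\]
The entire task is therefore to bound this double sum by a constant multiple of $\mathcal{M}(\bff)^{1/2}/(1-\widehat{\sigma})$; the powers $h^2$ and $h$ then survive untouched and deliver \eqref{Eq:FinalIP-DGError1} and \eqref{Eq:FinalIP-DGError2} simultaneously.

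First I would control the sources. Since $\bfS_0=\bff$ and $\bfS_j=2k^2\eta\bfE_{j-1}+k^2\eta^2\bfE_{j-2}$ with $\|\eta\|_{L^\infty(D)}\le1$ almost surely, the triangle and Cauchy--Schwarz (in $\Om$) inequalities together with estimate \eqref{Eq:ModeEst1} of Theorem~\ref{Thm:MultiModes} give, for $j\ge1$,
\[
\E\bigl(\|\bfS_j\|_{L^2(D)}\bigr)\le (k+1)\bigl[\,2\,C(j-1,k)^{1/2}+C(j-2,k)^{1/2}\,\bigr]\mathcal{M}(\bff)^{1/2},
\]
where I use the clean identity $k^2(1/k+1/k^2)=k+1$ and the convention $C(m,k)=0$ for $m<0$. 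The key algebraic observation is that the explicit constant factorizes: for $m\ge1$,
\[
C(m,k)^{1/2}=\frac{\sqrt{C_0}}{\sqrt 7}\,\bigl[7\sqrt{C_0}(1+k)(1+\mu)\bigr]^{m}=\frac{\sqrt{C_0}}{\sqrt 7}\,(\sigma/\veps)^{m}.
\]
Consequently every $\veps$-weighted source contribution $\veps^{j}\E(\|\bfS_j\|_{L^2(D)})$ collapses into a pure power $\sigma^{j}$ times the harmless prefactor $\mathcal{M}(\bff)^{1/2}/\bigl(7\sqrt 7(1+\mu)\bigr)$.

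Writing $\beta:=\widehat{C}_0(2k+2)$ and splitting $\veps^{n}=\veps^{\,n-j}\veps^{j}$, the generic summand becomes a constant times $(\veps\beta)^{\,n-j}\sigma^{j}$. The heart of the argument --- and the step I expect to be the main obstacle --- is reconciling the two \emph{distinct} geometric ratios that now appear: $\sigma$, coming from the growth of the mode functions, and $\veps\beta$, coming from the recursive IP-DG source replacement. Neither equals the other, yet the final constant may involve only the single quantity $\widehat{\sigma}=2\widehat{C}_0\sigma$. The device I would use is an \emph{asymmetric} bound: since $\sigma\le\widehat{\sigma}$ (because $2\widehat{C}_0\ge1$) I replace $\sigma^{j}$ by $\widehat{\sigma}^{j}$ but keep $(\veps\beta)^{\,n-j}$ intact, giving
\[
\sum_{j\ge1}(\veps\beta)^{\,n-j}\sigma^{j}\le \widehat{\sigma}^{\,n}\sum_{m\ge0}\Bigl(\frac{\veps\beta}{\widehat{\sigma}}\Bigr)^{m}=\widehat{\sigma}^{\,n}\,\frac{7\sqrt{C_0}(1+\mu)}{7\sqrt{C_0}(1+\mu)-1},
\]
where I use $\veps\beta/\widehat{\sigma}=1/\bigl(7\sqrt{C_0}(1+\mu)\bigr)<1$. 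This is precisely where the denominator $7\sqrt{C_0}(1+\mu)-1$ of the stated constant is born, and why the hypothesis implicitly forces $7\sqrt{C_0}(1+\mu)>1$.

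With the inner sum reduced to the single power $\widehat{\sigma}^{\,n}$, the outer sum is the elementary geometric series $\sum_{n\ge0}\widehat{\sigma}^{\,n}\le 1/(1-\widehat{\sigma})$, which converges exactly because of the standing assumption $\widehat{\sigma}<1$; the $j=0$ term contributes $\sum_n(\veps\beta)^{n}\le 1/(1-\widehat{\sigma})$ and is absorbed into the generic constant $C$. Collecting the Lemma prefactor $\widetilde{C}_0(1+k)$, the factor $\sqrt{C_0}/\sqrt 7$ from the factorization of $C(m,k)^{1/2}$, the factor $7\sqrt{C_0}(1+\mu)/\bigl(7\sqrt{C_0}(1+\mu)-1\bigr)$ from the inner sum, and $1/(1-\widehat{\sigma})$ from the outer sum, produces exactly $C(C_0,\widehat{C}_0,\widetilde{C}_0,k,\veps)$; the surviving powers $h^2$ and $h$ from the two displays of Lemma~\ref{Lem:FinalIP-DGError} then give the $L^2$ and $DG$ estimates, the latter requiring no new work since its summand differs only by the universal constant $C$ and the exponent of $h$.
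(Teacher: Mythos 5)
Your proposal is correct and follows essentially the same route as the paper's own proof: both start from Lemma \ref{Lem:FinalIP-DGError}, bound $\E\big(\|\bfS_j\|_{L^2(D)}\big)$ via Theorem \ref{Thm:MultiModes}, exploit the factorization $C(m,k)^{1/2}=\tfrac{\sqrt{C_0}}{\sqrt{7}}\big[7\sqrt{C_0}(1+k)(1+\mu)\big]^m$, and collapse the double sum into an inner geometric series with ratio $7\sqrt{C_0}(1+\mu)$ (your reindexed series in $\veps\beta/\widehat{\sigma}=1/\big(7\sqrt{C_0}(1+\mu)\big)$ is the identical computation, yielding the same factor $7\sqrt{C_0}(1+\mu)/\big(7\sqrt{C_0}(1+\mu)-1\big)$) followed by the outer series $\sum_n\widehat{\sigma}^{\,n}\le 1/(1-\widehat{\sigma})$. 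The only difference is cosmetic: you make explicit the hidden assumption $2\widehat{C}_0\ge 1$, which the paper uses silently when it pulls $\big[\veps\widehat{C}_0(2k+2)\big]^n$ out of the double sum.
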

\begin{proof}
	To obtain \eqref{Eq:FinalIP-DGError1} and 
	\eqref{Eq:FinalIP-DGError2} we need to find an upper bound for the double 
	sum in the estimates in Lemma \ref{Lem:FinalIP-DGError}.  By using the 
	definition of $\bfS_j$ and Theorem \ref{Thm:MultiModes} we find
	\begin{align*}
		&\sum_{n=0}^{N-1} \sum_{j=0}^{n} \veps^n \big[ \widehat{C}_0 (2k+2) 
		\big]^{n-j} \E \big( \| \bfS_j \|_{L^2(D)} \big) \\
		& \qquad \leq \sum_{n=0}^{N-1} \sum_{j=0}^{n} \veps^n \big[ \widehat{C}_0 (2k+2) \big]^{n-j} \E \big( 2k^2 \| \bfE_{j-1} \|_{L^2(D)} + k^2 \| \bfE_{j-2} \|_{L^2(D)} \big) \\
		& \qquad \leq 4 (k+1) \mathcal{M}(\bff)^{\frac{1}{2}}\sum_{n=0}^{N-1} \sum_{j=0}^{n} \veps^n \big[ \widehat{C}_0 (2k+2) \big]^{n-j} C(j-1,k)^\frac{1}{2} \\
		& \qquad = \frac{4 \mathcal{M}(\bff)^{\frac{1}{2}}}{7^{\frac{3}{2}}(1+\mu)} \sum_{n=0}^{N-1} \big[\veps \widehat{C}_0(2k+2) \big]^n \sum_{j=0}^n 7^j C_0^{\frac{j}{2}}(1+\mu)^j \\
		& \qquad \leq \frac{4 \sqrt{C_0} \mathcal{M}(\bff)^{\frac{1}{2}}}{7^{\frac{1}{2}}\left(7 \sqrt{C_0}(1+\mu) - 1\right)} \sum_{n=0}^{N-1} \left[14 \veps \widehat{C}_0 \sqrt{C_0}(1+k)(1+\mu) \right]^n \\
		& \qquad =  \frac{4 \sqrt{C_0} \mathcal{M}(\bff)^{\frac{1}{2}}}{7^{\frac{1}{2}}\left(7 \sqrt{C_0}(1+\mu) - 1\right)} \cdot \frac{1- \left[14 \widehat{C}_0 \sqrt{C_0}(1+k)(1+\mu) \veps \right]^n}{1-14 \widehat{C}_0 \sqrt{C_0}(1+k)(1+\mu) \veps}.
	\end{align*}
	Combining the above estimate with the estimates in Lemma \ref{Lem:FinalIP-DGError} and using $\widehat{\sigma} < 1$ yield \eqref{Eq:FinalIP-DGError1} and \eqref{Eq:FinalIP-DGError2}.
\end{proof}

To characterize the sampling error, we combine Theorem \ref{Thm:MultiModesMCError} 
with a similar argument as used in Theorem \ref{Thm:FinalIP-DGError}.
\begin{theorem} \label{Thm:FinalMCError}
	Suppose that $h$ is chosen to satisfy the asymptotic mesh 
	condition $k^3h^2 = O(1)$ and $\veps$ is chosen small enough to satisfy $
	\widetilde{\sigma} := 4 \widehat{C}_0(1+k) \veps < 1$, then
	\begin{align}
		\E \big( \| \E(\bfE^\veps_{h,N}) - \bfPsi^\veps_{h,N} \|_{L^2(D)} 
		\big) & \leq \frac{\widehat{C}_0}{2 \sqrt{M}} \left( \frac{1}{k} + 
		\frac{1}{k^2} \right) \cdot \frac{1}{1-\widetilde{\sigma}} 
		\E \big( \| \bff \|_{L^2(D)} \big),
		\label{Eq:FinalMCError1} \\
		\E \big( \| \E(\bfE^\veps_{h,N}) - \bfPsi^\veps_{h,N} \|_{DG} 
		\big) & \leq \frac{\widehat{C}_0}{2 \sqrt{M}} \left( 1 + \frac{1}{k} 
		\right) \cdot \frac{1}{1-\widetilde{\sigma}} 
		\E \big( \| \bff \|_{L^2(D)} \big).
		\label{Eq:FinalMCError2}
	\end{align}
\end{theorem}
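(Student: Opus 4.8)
The plan is to exploit the linearity that is already recorded in the identity $\bfPsi^\veps_{h,N} = \sum_{n=0}^{N-1}\veps^n\bfphi^h_n$ together with the fact that Algorithm 2 builds $\bfE^\veps_{h,N}(\om_j,\cdot) = \sum_{n=0}^{N-1}\veps^n \bfE^h_n(\om_j,\cdot)$, so that after applying the expectation operator
\[
\E(\bfE^\veps_{h,N}) - \bfPsi^\veps_{h,N} = \sum_{n=0}^{N-1}\veps^n\big(\E(\bfE^h_n) - \bfphi^h_n\big).
\]
First I would take the $L^2(D)$-norm and use the triangle inequality inside the expectation to reduce the whole estimate to a $\veps^n$-weighted sum of the per-mode sampling errors $\E\big(\|\E(\bfE^h_n) - \bfphi^h_n\|_{L^2(D)}\big)$. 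This mirrors exactly the reduction carried out in the proof of Theorem \ref{Thm:FinalIP-DGError}.

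Next I would bound each per-mode term. Jensen's inequality gives $\E(\|X\|) \le \big(\E(\|X\|^2)\big)^{1/2}$, so each summand is controlled by the square root of the quantity already estimated in Theorem \ref{Thm:MultiModesMCError} (itself the composition of Lemma \ref{Lem:MultiModesMCError} and Lemma \ref{Lem:DiscreteModes}), namely
\[
\E\big(\|\E(\bfE^h_n) - \bfphi^h_n\|_{L^2(D)}\big) \le \frac{1}{\sqrt M}\,\widehat{C}(n,k)^{1/2}\Big(\frac1k + \frac1{k^2}\Big)\big(\E\|\bff\|^2_{L^2(D)}\big)^{1/2}.
\]
The key algebraic observation is that the explicit form $\widehat{C}(n,k) = 4^{2n-1}\widehat{C}_0^{2n+2}(1+k)^{2n}$ collapses to $\widehat{C}(n,k)^{1/2} = \tfrac{\widehat{C}_0}{2}\big(4\widehat{C}_0(1+k)\big)^n$, so that $\veps^n\widehat{C}(n,k)^{1/2} = \tfrac{\widehat{C}_0}{2}\widetilde{\sigma}^n$ with $\widetilde{\sigma} = 4\widehat{C}_0(1+k)\veps$. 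Summing the resulting geometric series and using $\widetilde{\sigma}<1$,
\[
\sum_{n=0}^{N-1}\veps^n\widehat{C}(n,k)^{1/2} \le \frac{\widehat{C}_0}{2}\sum_{n=0}^{\infty}\widetilde{\sigma}^n = \frac{\widehat{C}_0}{2}\cdot\frac{1}{1-\widetilde{\sigma}},
\]
which delivers \eqref{Eq:FinalMCError1} after collecting the $k$-dependent and data factors. The $DG$ estimate \eqref{Eq:FinalMCError2} follows by the identical argument, using the $DG$ bound in Theorem \ref{Thm:MultiModesMCError} in place of its $L^2(D)$ analogue and the factor $\big(1+\tfrac1k\big)$ in place of $\big(\tfrac1k+\tfrac1{k^2}\big)$.

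The step I expect to require the most care is the bookkeeping in the geometric sum. The collapse $\veps^n\widehat{C}(n,k)^{1/2} = \tfrac{\widehat{C}_0}{2}\widetilde{\sigma}^n$ is exact only for $n\ge 1$; the $n=0$ term must be absorbed into the same geometric form, and since $\widehat{C}(0,k)^{1/2} = \widehat{C}_0^{1/2}$ while the formula predicts $\widehat{C}_0/2$, one invokes $\widehat{C}_0^{1/2}\le \widehat{C}_0/2$ (valid because $\widehat{C}_0$ is a large stability constant, $\widehat{C}_0\ge 4$) so the whole sum is dominated by $\tfrac{\widehat{C}_0}{2}\sum_{n\ge0}\widetilde{\sigma}^n$. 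The only other delicate point is the orientation of Jensen's inequality on the data side: the derivation naturally produces $\big(\E\|\bff\|^2_{L^2(D)}\big)^{1/2}$, which is the quantity to be read on the right-hand sides of \eqref{Eq:FinalMCError1}--\eqref{Eq:FinalMCError2}. Beyond these two points the argument is a routine combination of the two cited lemmas with the geometric summation, entirely parallel to the proof of Theorem \ref{Thm:FinalIP-DGError}.
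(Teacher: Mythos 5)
Your proof is correct and follows essentially the same route as the paper's: decompose the sampling error mode-by-mode via $\bfPsi^\veps_{h,N} = \sum_{n=0}^{N-1}\veps^n\bfphi^h_n$, bound each term through Theorem \ref{Thm:MultiModesMCError} (with Jensen's inequality), collapse $\veps^n\widehat{C}(n,k)^{\frac{1}{2}}$ into $\tfrac{\widehat{C}_0}{2}\widetilde{\sigma}^n$, and sum the geometric series. You are in fact more careful than the paper, which treats the $n=0$ term of the collapse and the data factor as exact identities (writing $\E\big(\|\bff\|_{L^2(D)}\big)$ where the derivation actually yields $\big(\E\|\bff\|^2_{L^2(D)}\big)^{\frac{1}{2}}$); your reading of both points is the correct one.
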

\begin{proof}
	From Theorem \ref{Thm:MultiModesMCError} the following estimates hold
	\begin{align*}
		&\E \big( \| \E( \bfE_{h,N}^\veps) - \bfpsi^\veps_{h,N} \|_{L^2(D)} 
		\big) \\
		& \qquad \leq  \sum_{n=0}^{N-1} \veps^n \E \big( \| \E( \bfE_{n}^h) - 
		\bfphi^h_{n} \|_{L^2(D)} \big) \\
		& \qquad \leq \frac{1}{\sqrt{M}} \left( \frac{1}{k} + 
		\frac{1}{k^2} \right) \E \big( \| \bff \|_{L^2(D)} \big)
		\sum_{n=0}^{N-1} \veps^n \widehat{C}(n,k)^{\frac{1}{2}} \\
		& \qquad = \frac{\widehat{C}_0}{2 \sqrt{M}} \left( \frac{1}{k} + 
		\frac{1}{k^2} \right) \E \big( \| \bff \|_{L^2(D)} \big)
		\sum_{n=0}^{N-1} \left[ 4 \widehat{C}_0 (k + 1) \veps \right]^n \\
		& \qquad = \frac{\widehat{C}_0}{2 \sqrt{M}} \left( \frac{1}{k} + 
		\frac{1}{k^2} \right) \cdot \frac{1 - \big(4 \widehat{C}_0(k+1) 
		\veps \big)^N}{1 - 4 \widehat{C}_0(k+1) \veps} 
		\E \big( \| \bff \|_{L^2(D)} \big).
	\end{align*}
	Thus, \eqref{Eq:FinalMCError1} holds.  \eqref{Eq:FinalIP-DGError2} is 
	proven using the same argument.
\end{proof}

Since each component of the error  
decomposition \eqref{Eq:Error_Decomposition} has been analyzed separately (see 
Theorems \ref{Thm:FiniteModesError}, \ref{Thm:FinalIP-DGError}, 
\ref{Thm:FinalMCError}), we now combine them to obtain the complete error analysis.
\begin{theorem} \label{Thm:Convergence}
Under the assumption that $\bfE_n \in \bL^2(\Om,\bH^2(D))$ for each $n \geq 0$, 
$h$ is chosen to satisfy the asymptotic mesh condition $k^3h^2 = O(1)
$, and $\veps$ is chosen small enough so that $\widehat{\sigma} < 1$, the 
following estimates hold:
\begin{align}
	\E \big( \| \E(\bfE^\veps) - \bfPsi^\veps_{h,N} \|_{L^2(D)} \big) & \leq C_1 
	\veps^N + C_2 h^2 + C_3 M^{\frac{1}{2}}, \label{Eq:Convergence1} \\
	\E \big( \| \E(\bfE^\veps) - \bfPsi^\veps_{h,N} \|_{L^2(D)} \big) & \leq C_1 
	\veps^N + C_2 h + C_3 M^{\frac{1}{2}}, \label{Eq:Convergence2}
\end{align}
where $C_j = C_j(C_0,\widehat{C}_0,\widetilde{C}_0,k,\veps,\bff)$ for $j = 
1,2,3$ are positive constants.
\end{theorem}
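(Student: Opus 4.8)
The plan is to bound the total error by applying the triangle inequality to the three-term decomposition \eqref{Eq:Error_Decomposition} and then estimating each piece by the corresponding result already established. Taking the $L^2(D)$ norm and then the expectation $\E(\cdot)$ over the sampling randomness, and observing that the first two differences are deterministic (both summands are expectations), gives
\begin{align*}
\E\big(\|\E(\bfE^\veps)-\bfPsi^\veps_{h,N}\|_{L^2(D)}\big)
&\le \|\E(\bfE^\veps)-\E(\bfE^\veps_N)\|_{L^2(D)} \\
&\quad + \|\E(\bfE^\veps_N)-\E(\bfE^\veps_{h,N})\|_{L^2(D)} \\
&\quad + \E\big(\|\E(\bfE^\veps_{h,N})-\bfPsi^\veps_{h,N}\|_{L^2(D)}\big).
\end{align*}
These three contributions are, respectively, the finite-modes truncation error (controlled by Theorem \ref{Thm:FiniteModesError}), the IP-DG discretization error (controlled by Theorem \ref{Thm:FinalIP-DGError}), and the Monte Carlo sampling error (controlled by Theorem \ref{Thm:FinalMCError}).

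For the first term I would use the Bochner-space Jensen inequality $\|\E(\bfE^\veps-\bfE^\veps_N)\|_{L^2(D)} \le \E\big(\|\bfE^\veps-\bfE^\veps_N\|_{L^2(D)}\big) \le \big(\E(\|\bfE^\veps-\bfE^\veps_N\|^2_{L^2(D)})\big)^{1/2}$, which converts the second-moment bound of Theorem \ref{Thm:FiniteModesError} into a first-moment bound proportional to $\sigma^N$. Since $\sigma = 7\veps C_0^{1/2}(1+k)(1+\mu)$, absorbing the $k$-, $\mu$-, and $\mathcal{M}(\bff)$-dependent prefactors into a constant $C_1$ yields the $C_1\veps^N$ contribution. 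The second term is handled directly by Theorem \ref{Thm:FinalIP-DGError} (again pulling the expectation outside the norm via Jensen), giving the $C_2 h^2$ term in the $L^2(D)$ estimate. The third term is exactly the quantity bounded in Theorem \ref{Thm:FinalMCError}, whose right-hand side carries the $M^{-1/2}$ factor and whose data dependence $\E(\|\bff\|_{L^2(D)})$ I would bound by $\mathcal{M}(\bff)^{1/2}$ so as to match the other two pieces; collecting the prefactors into $C_3$ produces the sampling contribution.

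This argument is essentially bookkeeping, so there is no single hard step; the only care required is reconciling the inhomogeneity among the three ingredient theorems. Theorem \ref{Thm:FiniteModesError} is stated as a second-moment estimate while Theorems \ref{Thm:FinalIP-DGError} and \ref{Thm:FinalMCError} are first-moment estimates, so the main subtlety is inserting Jensen's inequality in the right place and verifying that the single smallness hypothesis $\widehat{\sigma} < 1$ of the statement is strong enough to invoke each ingredient theorem, which individually require $\sigma < 1$ or $\widetilde{\sigma} < 1$; this amounts to checking that the various stability constants $C_0,\widehat{C}_0 \ge \tfrac12$ force $\sigma \le \widehat{\sigma}$ and $\widetilde{\sigma} \le \widehat{\sigma}$. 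Finally, the DG-norm statement \eqref{Eq:Convergence2} follows from the identical three-term split using the DG-norm versions of each ingredient theorem, which replaces the $h^2$ in the discretization term by $h$.
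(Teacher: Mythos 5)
Your proposal is correct and follows exactly the route the paper intends: the paper gives no written proof beyond remarking that Theorems \ref{Thm:FiniteModesError}, \ref{Thm:FinalIP-DGError}, and \ref{Thm:FinalMCError} are combined through the decomposition \eqref{Eq:Error_Decomposition}, and your argument supplies precisely that combination. Your added bookkeeping---inserting Jensen's inequality to pass from the second-moment bound of Theorem \ref{Thm:FiniteModesError} to a first-moment bound, and checking that $\widehat{\sigma}<1$ dominates the conditions $\sigma<1$ and $\widetilde{\sigma}<1$ (valid since the stability constants may be taken $\geq \tfrac12$)---is exactly the detail the paper leaves implicit.
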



\section{Numerical experiments} \label{sec:Numerical_Experiments}
In this section we present numerical experiments
to verify the accuracy and efficiency of the proposed MCIP-DG with the multi-modes expansion.
As a benchmark, we will compare the multi-modes MCIP-DG approximation 
$\bfPsi_N = \bfPsi^\veps_{h,N}$ generated by Algorithm 2 with
the standard MCIP-DG method approximation $\bftPsi = \bftPsi^\veps_h$ generated 
by Algorithm 1.  Though theoretically the convergence theorem requires that 
$P \left\{ \om \in \Om; \| \nabla \eta(\om,\cdot) \|_{L^\infty(D) }\leq \mu \right\} = 1$ and 
the perturbation parameter  $\veps = O ((1+k)^{-1}(1 + \mu)^{-1})$, we will investigate both the smooth and non-smooth random fields.


In all of our numerical experiments the spatial domain $D$ is taken to be the unit cube 
$(0,1)^3$.  To define the IP-DG method this domain will be partitioned 
uniformly into cubes with dimension $h = 1/10$.  The size of $h$ is not taken 
to be smaller to limit the size of the linear system that is generated by the 
method.  In all of our tests we carry out a Monte Carlo procedure with $M = 
1000$ samples computed.  All computational tests are completed in MATLAB using 
the same Mac computer with a 2 GHz Intel Core i7  processor and 8 GB 1600 MHz 
DDR3 RAM.  The wave number parameter $k$ will be chosen as $k = 2$ so that our 
relatively coarse spatial mesh will be able to resolve the wave in a 
sufficient manner. We choose the following source function
\begin{align} \label{Eq:RHSFunction}
	\bff(\om,\bfx) = \Big[ \exp\big(\i k(1+\xi(\om,\bfx) )x \big), \exp\big(\i 
	k(1+\xi(\om,\bfx)) y \big), \exp\big(\i k(1+\xi(\om,\bfx)) z \big) 
	\Big]^T,
\end{align}
where $\xi(\om,\cdot)$ is a random variable satisfying $\| \xi(\om, \cdot) \|
_{L^\infty(D)} \leq 1$ almost surely.  


\subsection{Numerical experiments with smooth random field} \label{subsec:Num_1}

This subsection discusses numerical experiments that were carried out using smooth 
random field.  In particular, the random field $\eta$ in \eqref{Eq:PDE1} and $\xi$ in \eqref{Eq:RHSFunction} were 
taken to be Gaussian random fields with an exponential covariance function 
with correlation length $\ell = 0.5$ (c.f. \cite{Lord_Powell_Shardlow}), i.e. 
the following covariance function was used to generate the random coefficients 
in this subsection
\begin{align*}
	C(\bfx_1,\bfx_2) = \exp\left( - \frac{\| \bfx_1 - \bfx_2 \|_{2}}{0.5} 
	\right).
\end{align*}
For simplicity the coefficients were sampled for each cube in the partition of 
the spatial domain D. Figure \ref{fig:SmoothNoiseSamples} gives two samples of 
the coefficient functions used in this subsection.

\begin{figure}[htbp]
\centerline{
\includegraphics[width=2.2in,height=1.8in]{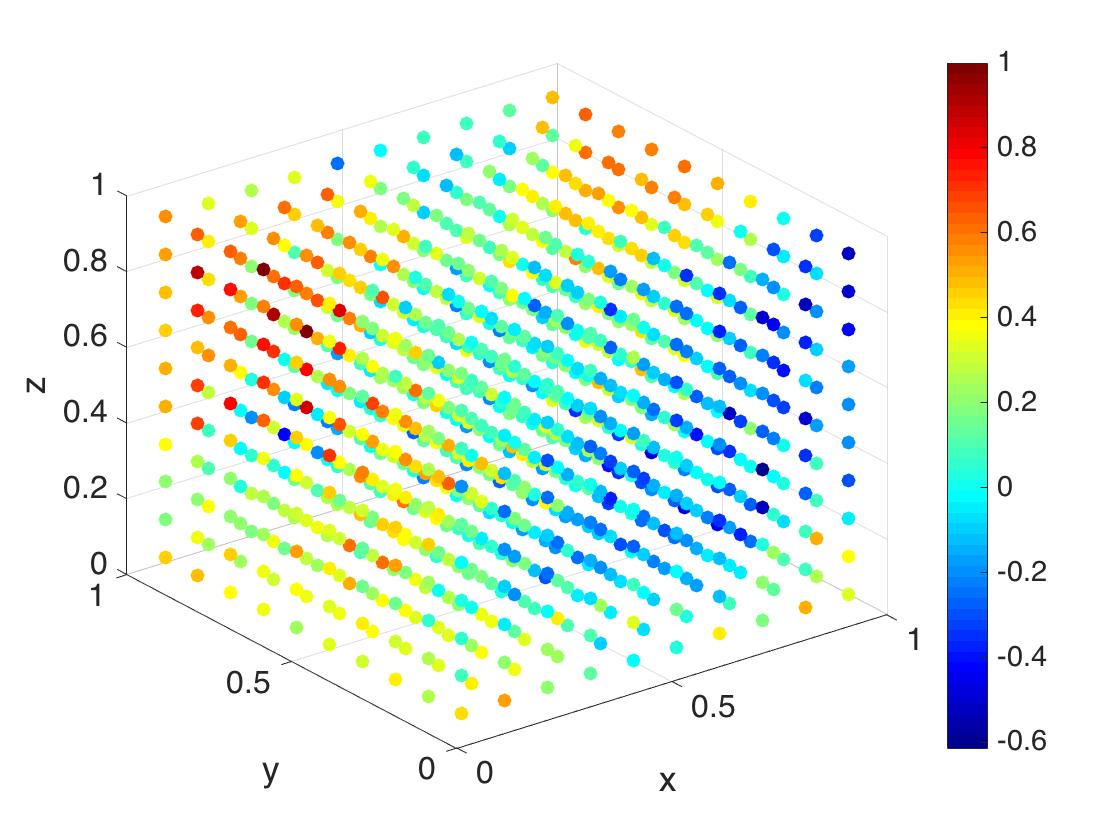} 
\includegraphics[width=2.2in,height=1.8in]{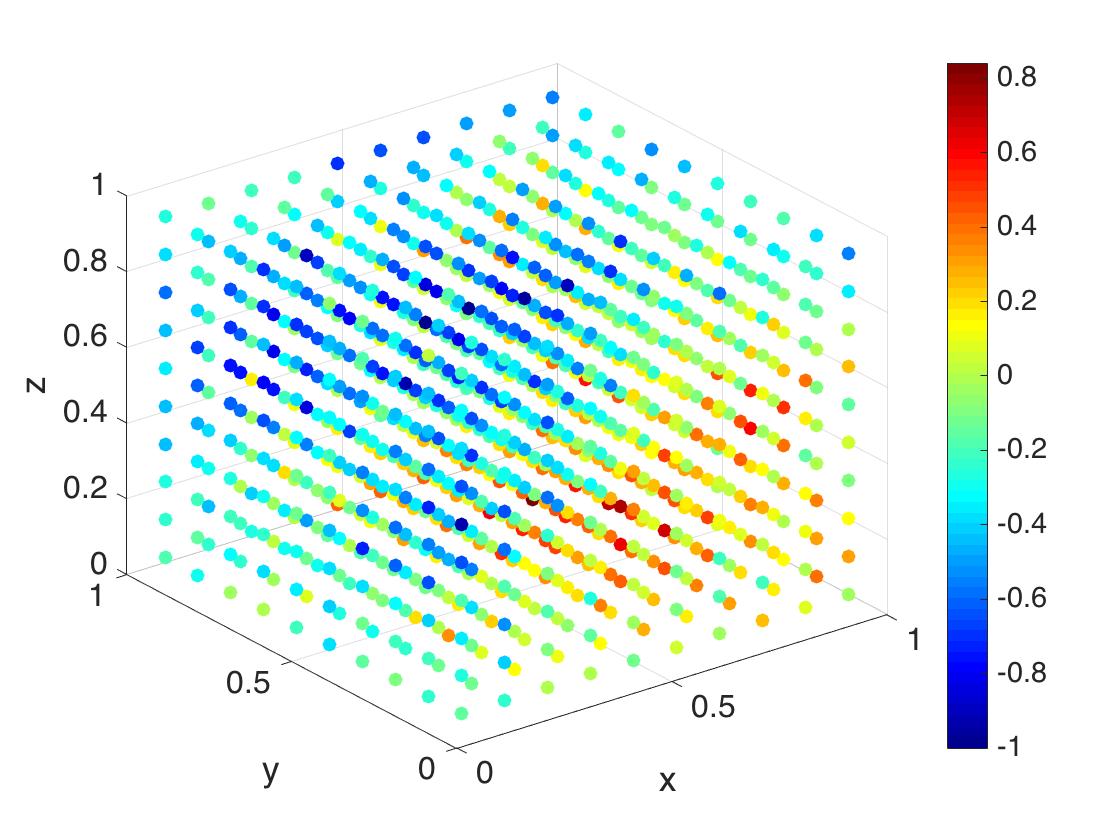}}
\centerline{
\includegraphics[width=2.2in,height=1.8in]{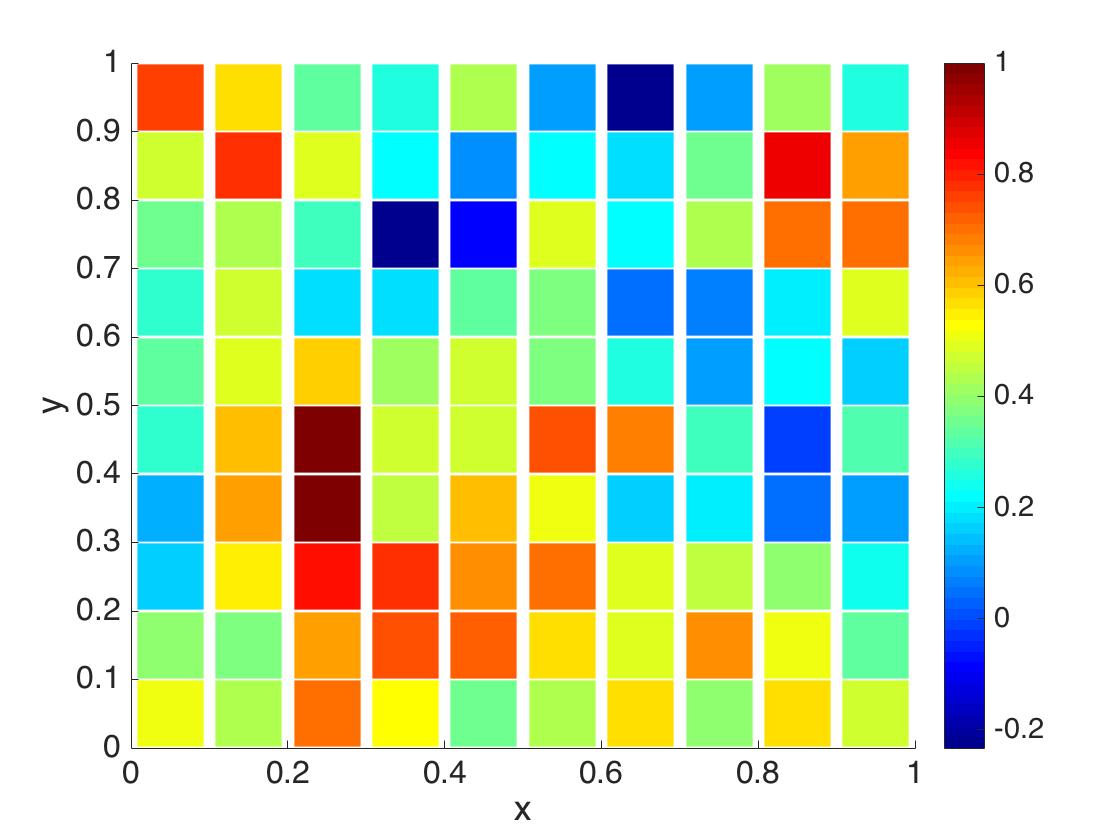} 
\includegraphics[width=2.2in,height=1.8in]{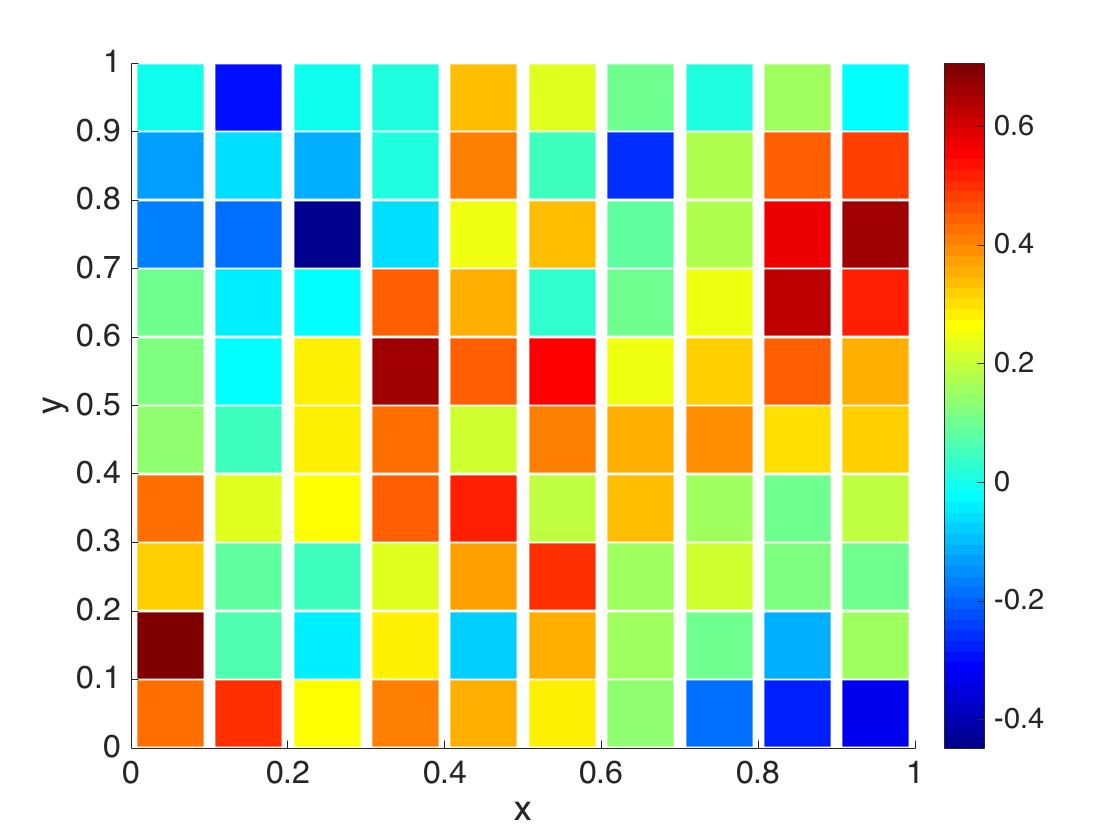}}
\caption{(Above)Samples of the random field $\eta(\omega,\cdot)$ generated 
using an exponential covariance function with covariance length $\ell = 0.5$ 
on a partition of $D$ parameterized by $h = 1/10$. 
(Below) Cross sections of these samples.}  \label{fig:SmoothNoiseSamples}
\end{figure}


We chose $\veps =$ 0.1, 0.3, 0.5, 0.7, 0.9, and for each fixed $\veps$, 
Algorithm 2 was used to produce the multi-modes MCIP-DG approximation $\bfPsi_N$ with $N =$ 0, 1, 2, 3, 4, 5, and 6.
Figure \ref{fig:Smooth_error_plot_1} -- \ref{fig:Smooth_error_plot_3} 
demonstrate the behavior of the error $\| \bftPsi - \bfPsi_N \|_{L^2(D)}$ and 
$\veps^N$ for these tests.  These plots use a log-scale for the y-axis for 
ease of comparison.  For all values of $\veps$ tested it is clear that the 
multi-modes MCIP-DG method produces an accurate approximation in comparison 
with the standard MCIP-DG method.  We also observe that the error converges at 
a rate $O(\veps^N)$ as predicted in the previous sections.  Surprisingly, we 
observe the method is working for a large $\veps$ value like $\veps = 0.9$.  
In previous tests involving the multi-modes MCIP-DG method applied to a random 
Helmholtz problem and random elastic Helmholtz problem the method stopped 
working for $\veps$ close to 1. See \cite{Feng_Lin_Lorton_15,Feng_Lorton_17}. 
This is possibly a result of the tests in this paper being carried out with a 
relatively small wave number parameter $k = 2$ and further tests should be 
carried out to investigate this.  

\begin{figure}[htbp]
\centerline{
\includegraphics[width=2.2in,height=1.8in]{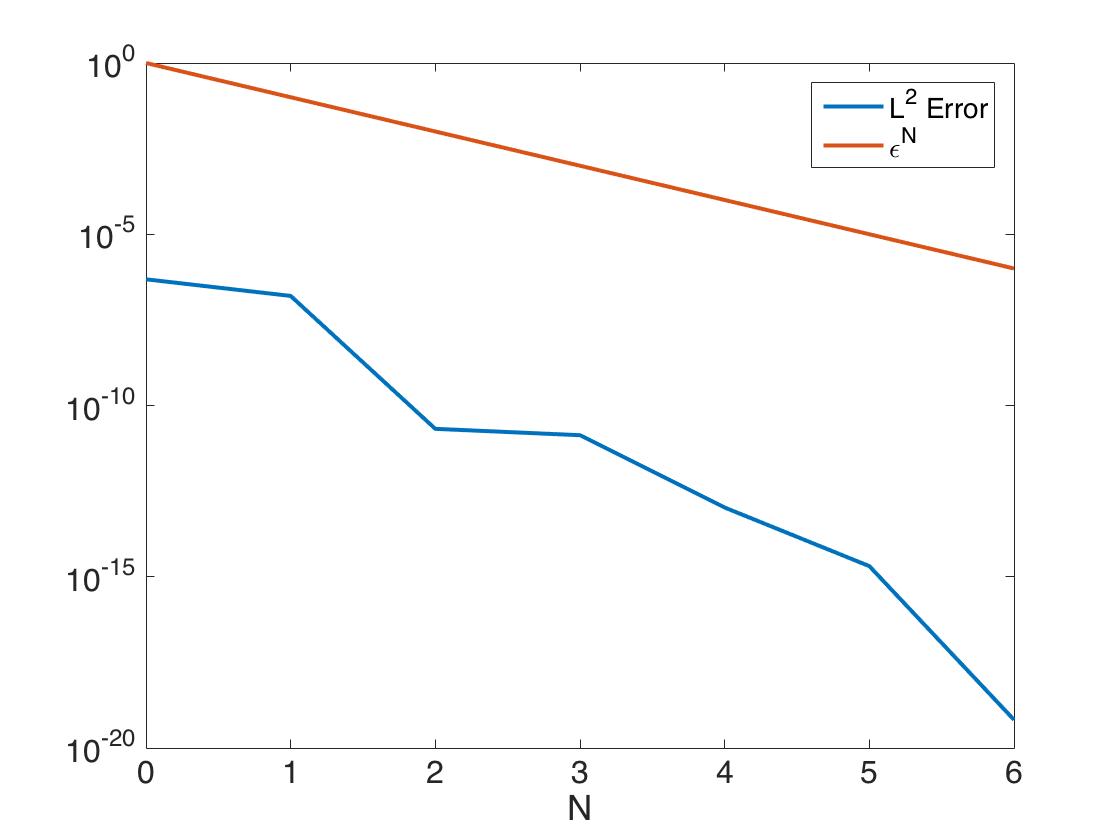} 
\includegraphics[width=2.2in,height=1.8in]{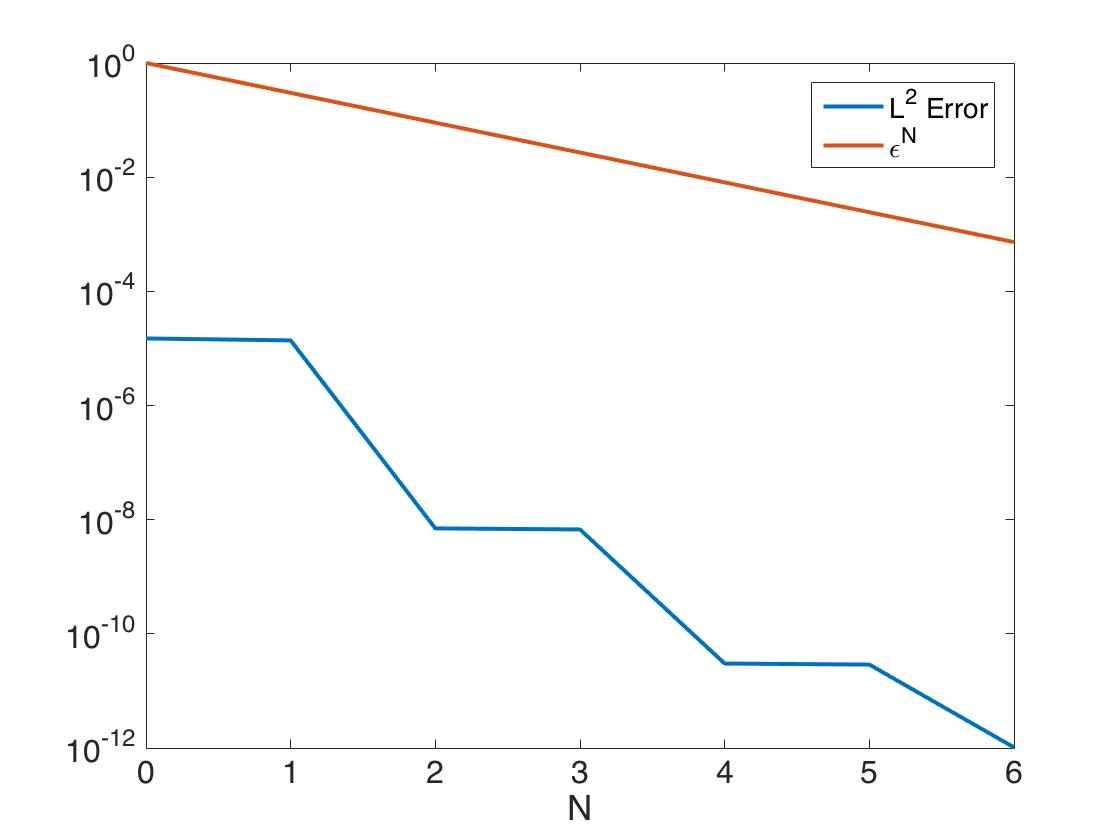}}
\caption{Plots of $\| \bftPsi - \bfPsi_N \|_{L^2(D)}$ and $\veps^N$ with $
\veps = 0.1$ (Left) and $\veps = 0.3$ (Right).}
\label{fig:Smooth_error_plot_1}
\end{figure}

\begin{figure}[htbp]
\centerline{
\includegraphics[width=2.2in,height=1.8in]{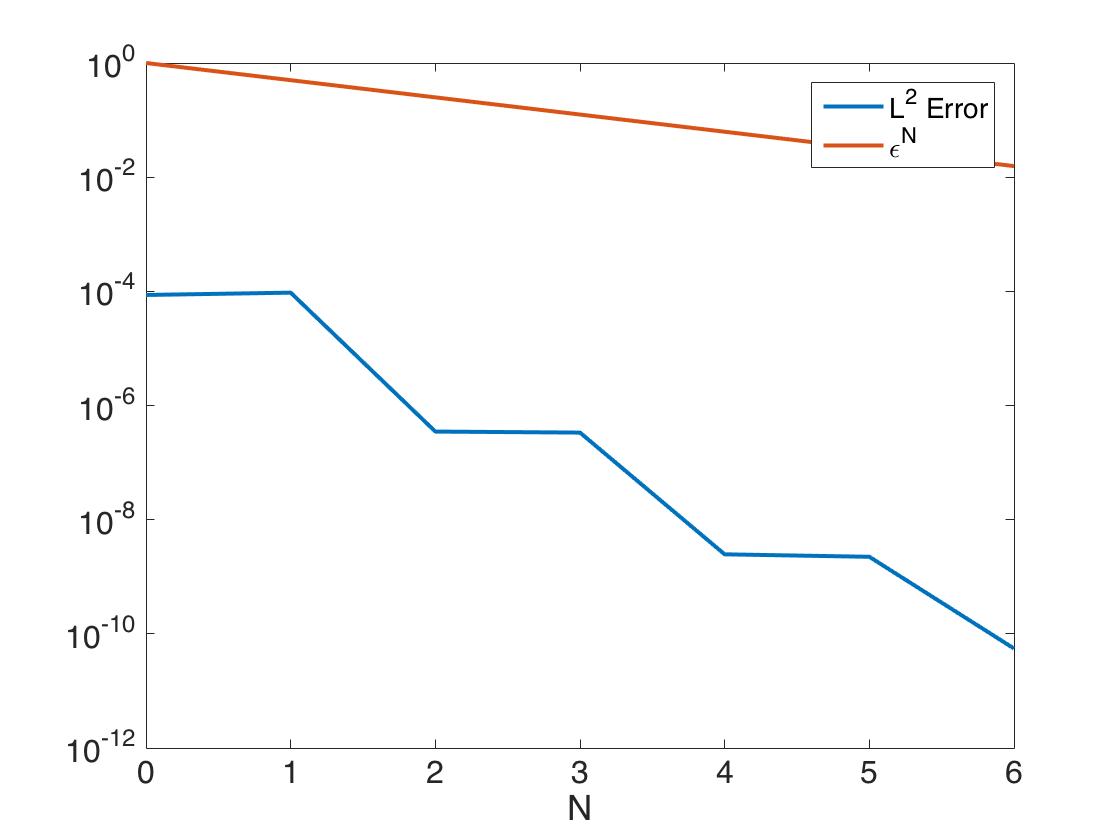} 
\includegraphics[width=2.2in,height=1.8in]{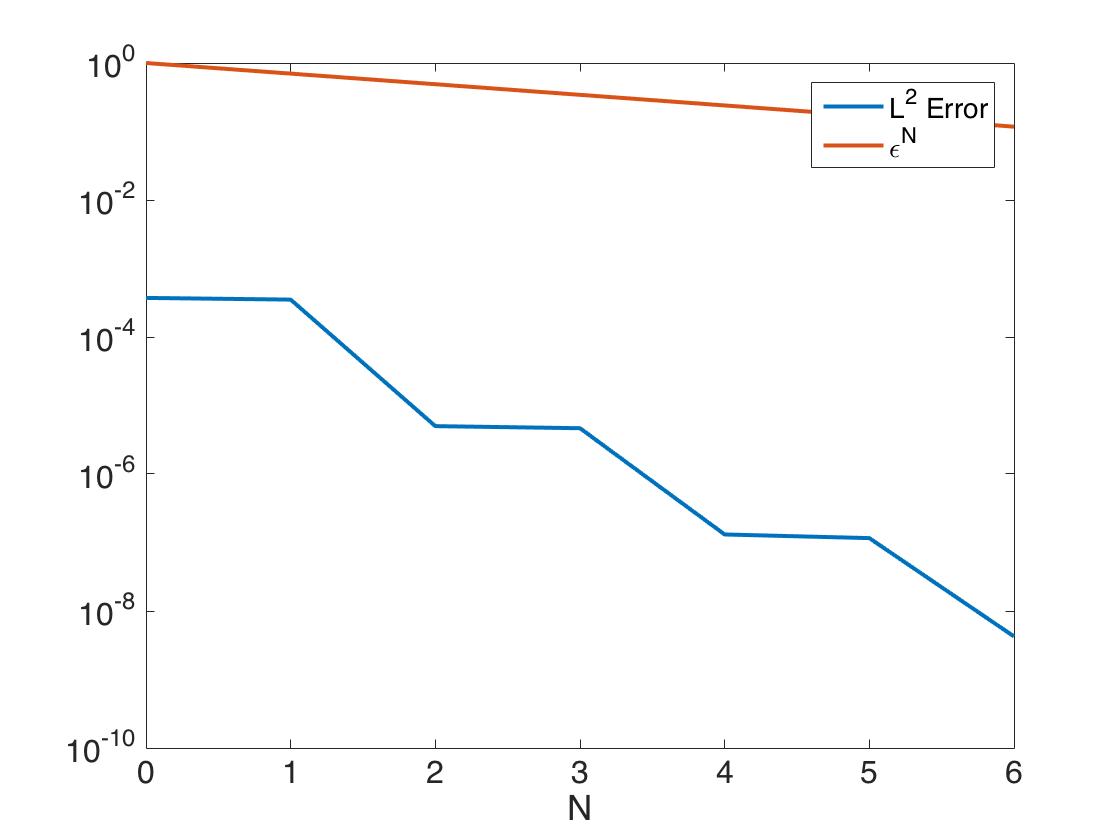}}
\caption{Plots of $\| \bftPsi - \bfPsi_N \|_{L^2(D)}$ and $\veps^N$ with $
\veps = 0.5$ (Left) and $\veps = 0.7$ (Right).}
\label{fig:Smooth_error_plot_2}
\end{figure}

\begin{figure}[htbp]
\centerline{
\includegraphics[width=2.2in,height=1.8in]{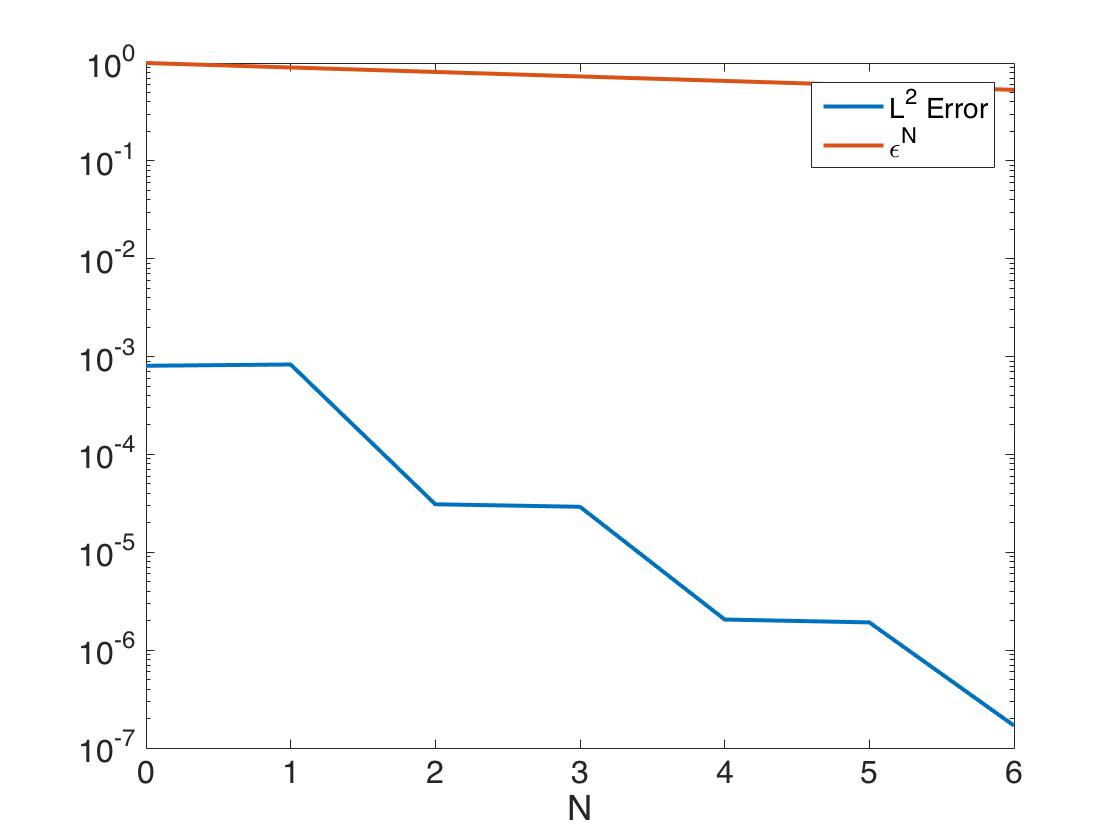} 
\includegraphics[width=2.2in,height=1.8in]{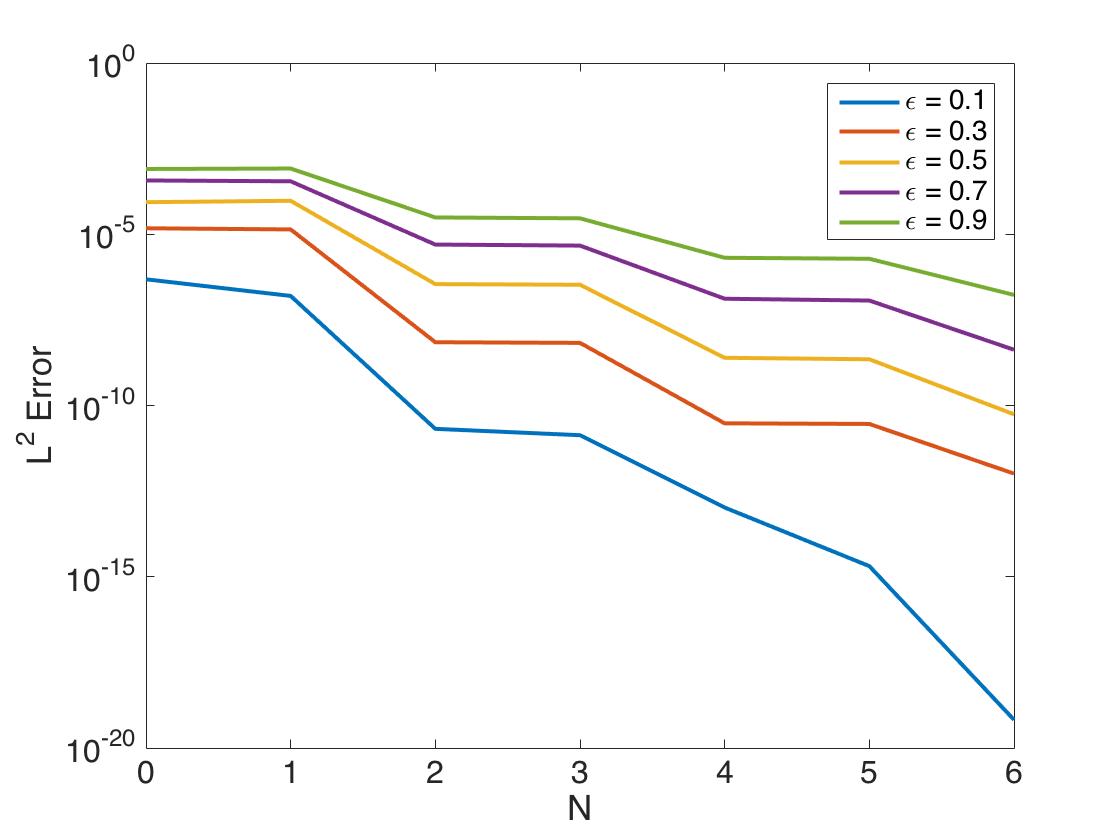}}
\caption{Plot of $\| \bftPsi - \bfPsi_N \|_{L^2(D)}$ and $\veps^N$ with 
$\veps = 0.9$ (Left).  Plots of $\| \bftPsi - \bfPsi_N \|_{L^2(D)}$ for 
varying values of $\veps$ (Right).}
\label{fig:Smooth_error_plot_3}
\end{figure}

We also observe that the error exhibits a behavior of staying relatively flat 
for approximations with $N$ odd while decreasing with $N$ even.  This behavior 
was also observed for other Helmholtz-like problems (c.f. 
\cite{Feng_Lin_Lorton_15,Feng_Lorton_17}).  This might lead one to believe 
that only even-labeled mode functions are useful in the multi-modes 
approximation, but this would be incorrect since the recursive relationship 
used to build the multi-modes approximation in \eqref{Eq:ModePDE3} involves 
both odd and even mode functions. From these results it does make more sense 
to apply the multi-modes MCIP-DG method with $N$ even as it is expected this 
will result in less error.

We also observe that for $\veps$ small $N$ can be chosen to be relatively 
small to obtain an accurate approximation.  This will lead to great savings in 
the computation time used to generate the approximation. Table 
\ref{table:ModesVsFullMonte} summarizes the computational time used for several modes.
As expected the multi-modes MCIP-DG approximation 
saves a great amount of time in comparison to the standard MCIP-DG 
approximation.  We also observe linear growth in computation time as the 
number of modes $N+1$ used to generate the approximation increase.

\begin{table}[htb]
\centering
\begin{tabular}{| c | c |}
	\hline
	Approximation & CPU Time (s) \\
	\hline
	$\bftPsi$ & $41832$ \\
	\hline
	$\bfPsi_0$ & $1436.8$ \\
	\hline
	$\bfPsi_1$ & $2738.1$ \\
	\hline
	$\bfPsi_2$ & $4041.4$ \\
	\hline
	$\bfPsi_3$ & $5343$ \\
	\hline
	$\bfPsi_4$ & $6647.7$ \\
	\hline
	$\bfPsi_5$ & $7953.8$ \\
	\hline
	$\bfPsi_6$ & $9261.9$ \\
	\hline
\end{tabular}
\caption{CPU times required to compute the multi-modes MCIP-DG approximation $
\bfPsi_N$ and standard MCIP-DG approximation $\bftPsi$.} 
\label{table:ModesVsFullMonte}
\end{table}

\subsection{Numerical experiments with non-smooth random field} \label{subsec:Num_2}

This subsection discusses numerical experiments that were carried out using non-smooth 
random coefficients.  In particular, the random coefficient $\eta$ in 
\eqref{Eq:PDE1} and the random parameter $\xi$ in \eqref{Eq:RHSFunction} were 
generated by sampling a uniformly distributed random variable for each cube 
in the partition of $D$ independently. Thus $\eta$ no longer satisfies the condition that it is smooth with $\| \nabla \eta \|_{L^\infty(D)} \leq \mu$ a.s. Figure \ref{fig:WhiteNoiseSamples} 
gives two samples of the coefficient functions used in this subsection.

\begin{figure}[htbp]
\centerline{
\includegraphics[width=2.2in,height=1.8in]{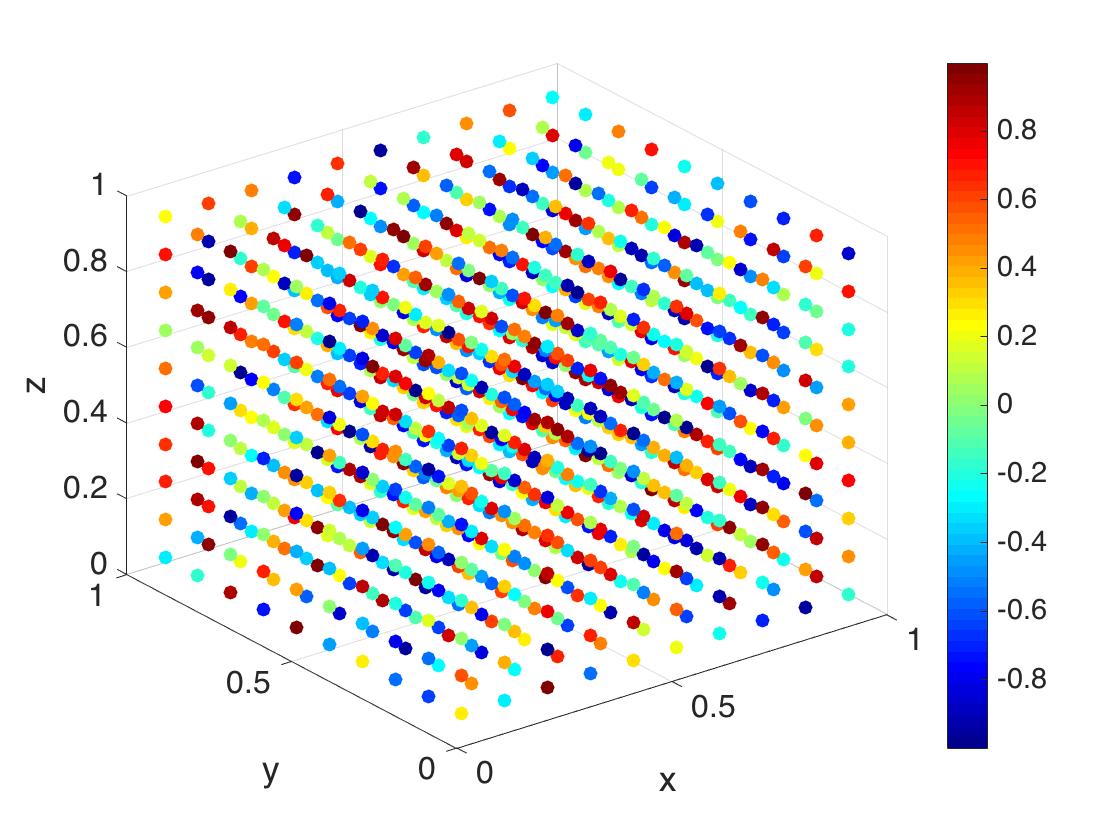} 
\includegraphics[width=2.2in,height=1.8in]{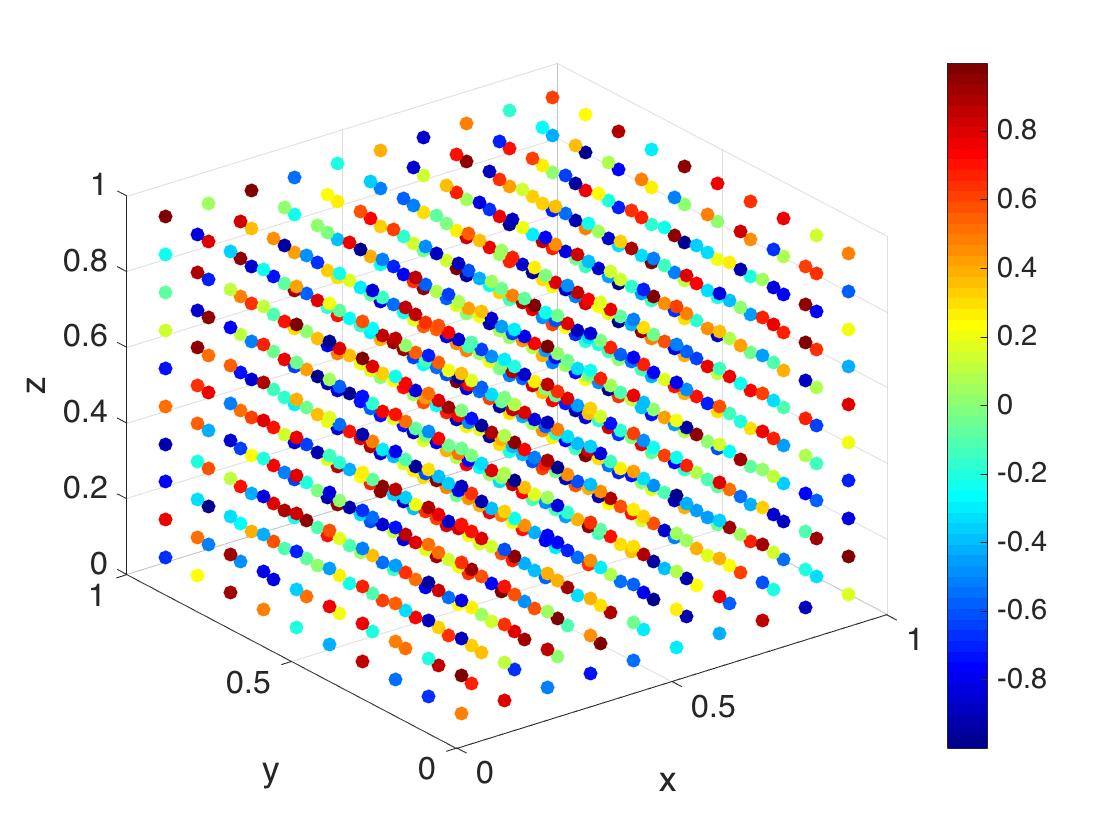}}
\centerline{
\includegraphics[width=2.2in,height=1.8in]{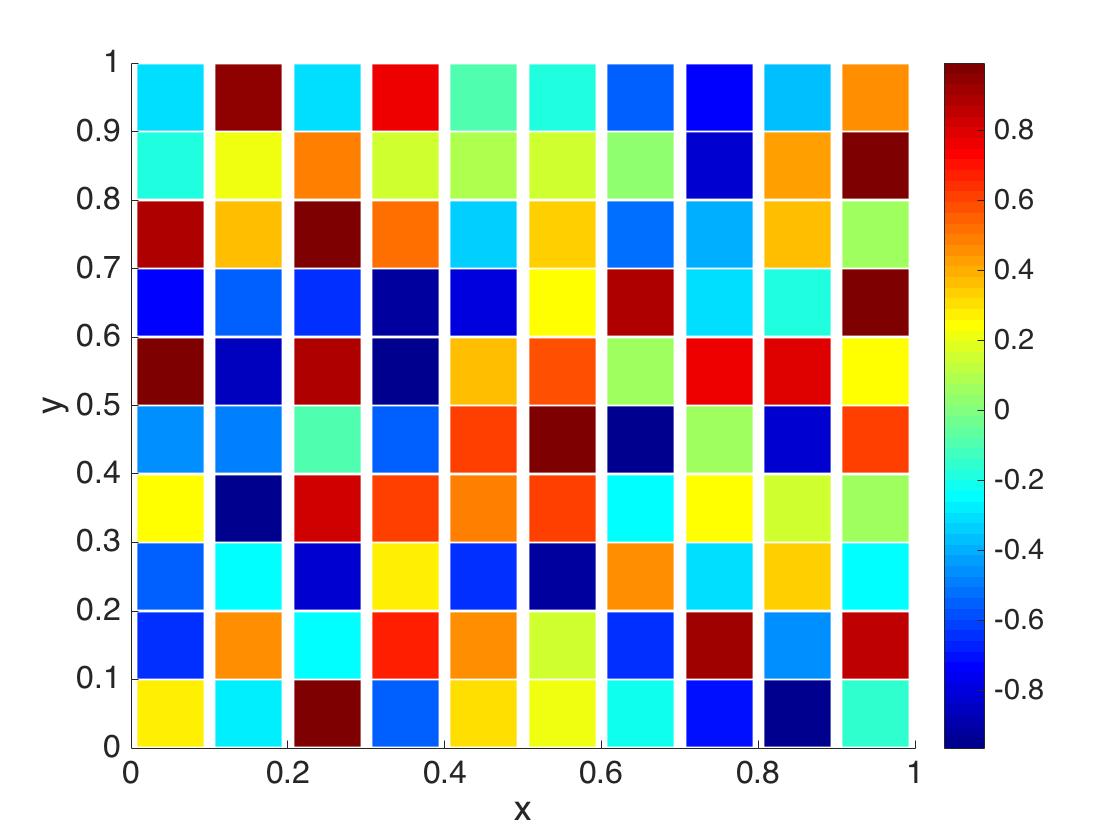} 
\includegraphics[width=2.2in,height=1.8in]{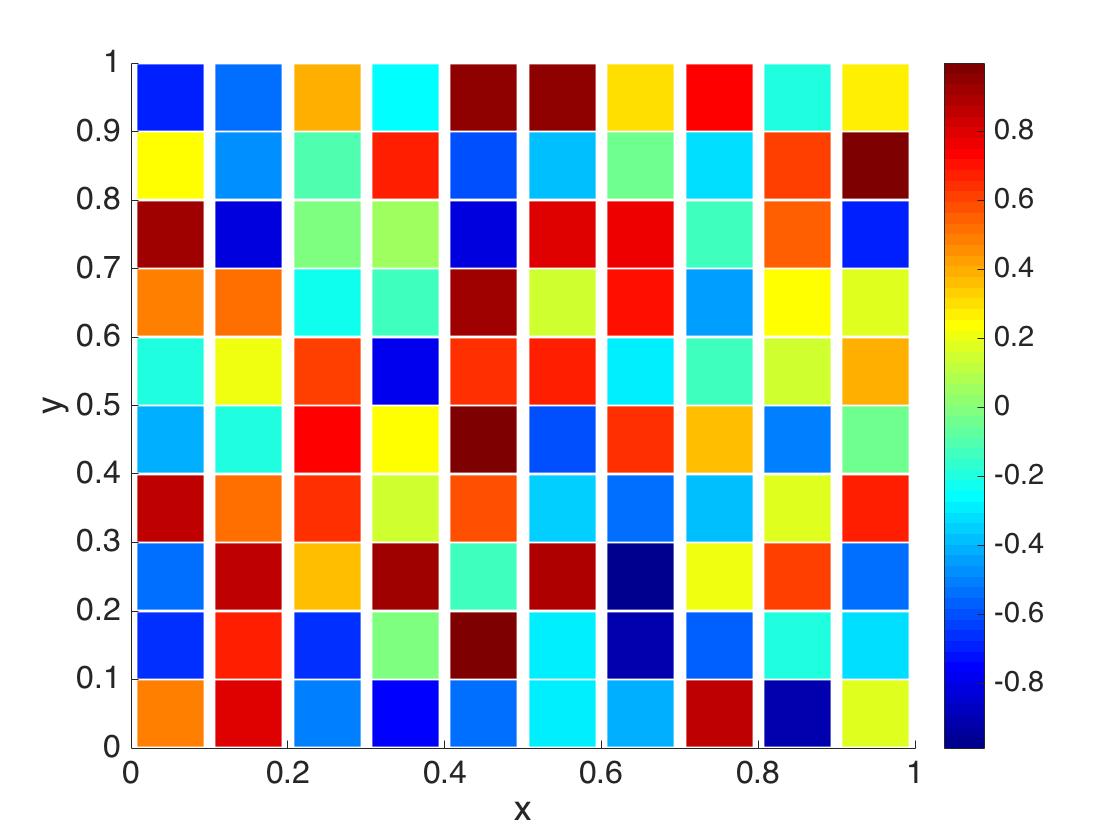}}
\caption{(Above) Samples of the random field $\eta(\omega,\cdot)$ generated 
using a uniformly distributed random variable for each cube in the partition 
of $D$ independently. (Below) Cross sections of these samples.} 
\label{fig:WhiteNoiseSamples}
\end{figure}

Experiments using non-smooth random coefficients $\eta$ and $\xi$ yielded similar results as demonstrated in Subsection \ref{subsec:Num_1}.  In particular, the method still demonstrated an error convergence rate $O(\veps^N)$ for $\veps$ = 0.1, 0.3, 0.5, 0.7, and 0.9. This is demonstrated
in Figure \ref{fig:Error_Plot_4}.
\begin{figure}[htbp]
\centerline{
\includegraphics[width=2.2in,height=1.8in]{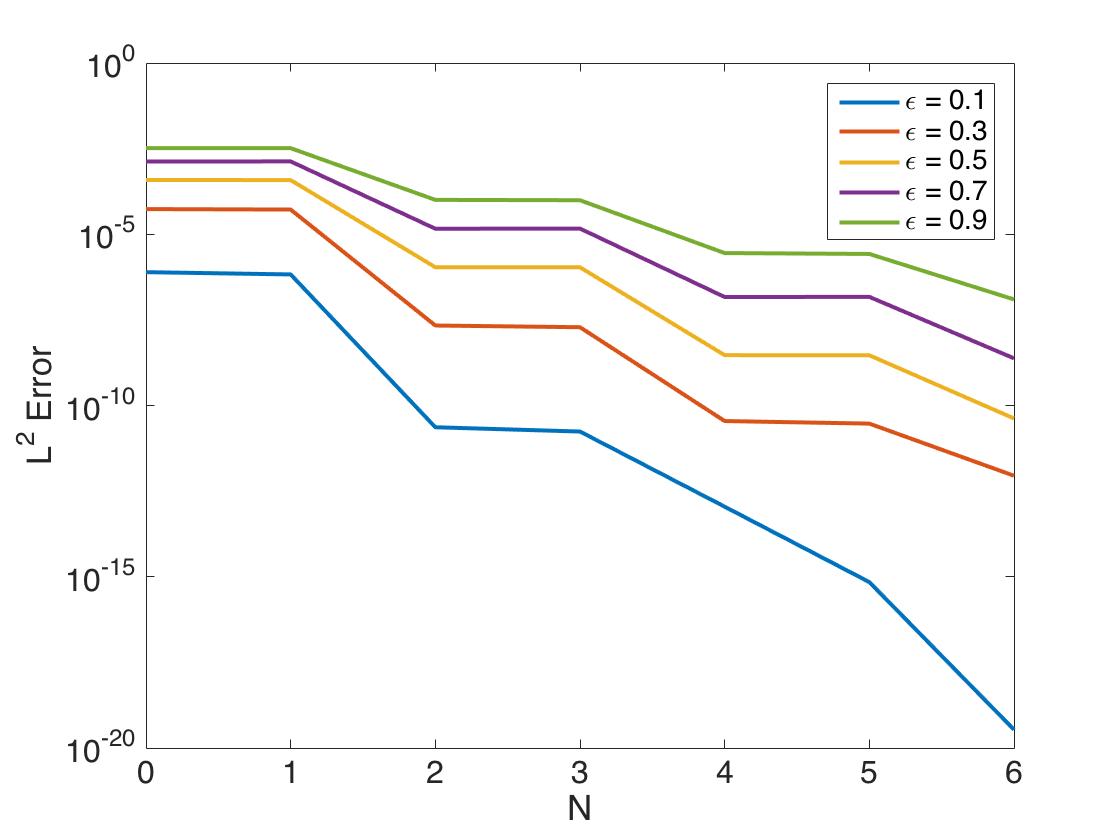}
} 
\caption{Plots of $\| \bftPsi - \bfPsi_N \|_{L^2(D)}$ for 
varying values of $\veps$.} \label{fig:Error_Plot_4}
\end{figure}

Due to the fact that the experiments with non-smooth random field returned similar convergence results ,we are hopeful that in some cases the added smoothness conditions on the random coefficient $\eta$ 
may be eliminated. More numerical experiments will be carried out to investigate.

\section{Extension to more general random media}\label{sec-7}
To use the multi-modes Monte Carlo DG method we have developed above, it requires that 
the random media are weak in the sense that the random coefficient $\alpha$ in the PDE has
the form $\alpha(\omega,\bfx)=\alpha_0 +\veps\eta(\omega,\bfx)$ and $\veps$ is not large (note that
we have taken $\alpha_0=1$ for notational brevity). In this section we present a procedure by which 
we can extend our multi-modes approach to a class of more general random media.

For general random media, the random coefficient $\alpha$ may not have 
the required ``weak form". To extend the multi-modes approach presented in the previous section 
to the general case,  our main idea is first to reformulate $\alpha(\omega,\bfx)$ into the 
desired ``weak form" $\alpha_0+\veps \eta(\omega,\bfx)$, then to apply the above ``weak" field framework.
There are at least two ways to do such a reformulation, the first one is to utilize the 
well-known Karhunen-Lo\`eve expansion and the second is to use  
a stochastic homogenization theory \cite{DGO}. Since the second approach is 
more involved and lengthy to describe, below we only outline the first approach.

For many biological and materials science applications, the random media can be described 
by a Gaussian random field \cite{FGPS, Ishimaru_78, Lord_Powell_Shardlow}. 
It is a well-known fact that any Gaussian random field $\alpha$ is uniquely determined by 
its mean and covariance function. Let $\alpha_0(\bfx)$ and $C(\bfx_1,\bfx_2)$ denote the mean and covariance 
function of the Gaussian random field $\alpha$, respectively. 
Two covariance functions, which are widely used in geoscience and materials
science, are $C(\bfx_1,\bfx_2)=\exp(-|\bfx_1-\bfx_2|^m/\ell)$ for $m=1,2$ and $0<\ell<1$ 
(cf. \cite[Chapter 7]{Lord_Powell_Shardlow}). Here $\ell$ is called correlation length which 
determines the range (or frequency) of the noise. We now recall that the Karhunen-Lo\`eve expansion 
for $\alpha(\ome,\bfx)$ takes the following form (cf. \cite{Lord_Powell_Shardlow}):
\[
\alpha(\omega,\bfx)= \alpha_0(\bfx) + \sum_{k=1}^\infty \sqrt{\lambda_k} \phi_k(\bfx) \xi_k(\omega),
\]
where $\{(\lambda_k, \phi_k)\}_{k\geq 1}$ is the eigenset of the (self-adjoint) covariance operator and
$\{\xi_k  \sim  N(0,1) \}_{k\geq 1}$ are i.i.d. random variables. It can be shown that  
$\lambda_k=O(\ell^r)$ for some $r>1$ depending on the spatial domain $D$ in which the PDE is defined 
(cf. \cite[Chapter 7]{Lord_Powell_Shardlow}). Consequently, for random media with small 
correlation length $\ell$, we have 
\[
\alpha(\omega,\bfx)=\alpha_0(\bfx)+ \sqrt{\lambda_1} \zeta(\omega,\bfx), \qquad 
\zeta(\omega,\bfx):= \sum_{k=1}^\infty\sqrt{ \frac{\lambda_k}{\lambda_1} }\, \phi_k(\bfx) \xi_k(\omega),
\]
Thus, setting $\varepsilon=\sqrt{\lambda_1} =O(\ell^{\frac{r}{2}})$ then leads to 
$\alpha(\ome,x)=\alpha_0 + \varepsilon \zeta$,
which has the desired ``weak form" which is given by a sum of a deterministic field and
a small random perturbation. As a result, our multi-modes Monte Carlo DG method 
is now applicable.  

We like to note that the classical Karhunen-Lo\`eve expansion may be replaced by other
types of expansion formulas which may result in more efficient multi-modes Monte Carlo methods. 
The feasibility and competitiveness of non-Karhunen-Lo\`eve expansion techniques will 
be investigated in a forthcoming work, where comparison among different expansion choices will 
also be studied. Finally, we remark that the DG method can be replaced by 
any other space discretization method such as finite difference, finite element, and
spectral methods in the main algorithm.


\def\cprime{$'$}

\end{document}